\newtheorem{theorem}{Theorem}[section]
\newtheorem{lemma}[theorem]{Lemma}
\newtheorem{remark}[theorem]{Remark}
\newtheorem{assumption}[theorem]{Assumption}
\newcommand{\abssec}[1]{\noindent\normalsize {\bf #1\quad }\ignorespaces}
\renewenvironment{abstract}{\abssec{Abstract}}{\par\vspace{.1in}}
\newenvironment{keywords}
   {\begin{trivlist}\item[]{\bfseries\sffamily Keywords:}\ }
   {\end{trivlist}}
\newenvironment{AMSMOS}{\abssec{AMS subject classification}}{\par\vspace{.1in}}
\numberwithin{equation}{section}
\newcommand{\dst}{\displaystyle}
\newcommand{\lnorm}[2]{\|#1\|_{L^2(#2)}}
\newcommand{\ds}{\,ds}
\newcommand{\into}[1]{\int\limits_{\Omega} #1 \,dx}
\newcommand{\intg}[1]{\int\limits_{\Gamma} #1 \,ds}
\newcommand{\scalar}[3]{(#1\,,\,#2)_{L^2(#3)}}
\renewcommand{\O}{\Omega}
\newcommand{\G}{\Gamma}
\newcommand{\yb}{\bar y}
\newcommand{\ub}{\bar u}
\newcommand{\pb}{\bar p}
\newcommand{\yhb}{\bar y_h}
\newcommand{\uhb}{\bar u_h}
\newcommand{\phb}{\bar p_h}
\newcommand{\WbO}[1]{W^{#1,2}_{\vec \beta}(\Omega)}
\newcommand{\mc}[3]{\multicolumn{#1}{#2}{#3}}
\title{Superconvergence for Neumann boundary control problems governed by semilinear elliptic equations}
\author{
J. Pfefferer\thanks{Universit\"at der Bundeswehr M\"unchen, Werner-Heisenberg-Weg 39, 85579 Neubiberg, Germany, \mbox{Johannes.Pfefferer@unibw.de}}
\and
K. Krumbiegel\thanks{Weierstrass Institute for Applied Mathematics and Stochastics, Nonlinear Optimization and Inverse Problems, Mohrenstrasse 39, D-10117 Berlin, \mbox{krumbiegel@wias-berlin.de}}}
\date{}
\begin{document}
\maketitle

\begin{abstract}
This paper is concerned with the discretization error analysis of semilinear Neumann boundary control problems in polygonal domains with pointwise inequality constraints on the control. The approximations of the control are piecewise constant functions. The state and adjoint state are discretized by piecewise linear finite elements. In a postprocessing step approximations of locally optimal controls of the continuous optimal control problem are constructed by the projection of the respective discrete adjoint state. Although the quality of the approximations is in general affected by corner singularities a convergence order of $h^2|\ln h|^{3/2}$ is proven for domains with interior angles smaller than $2\pi/3$ using quasi-uniform meshes. For larger interior angles mesh grading techniques are used to get the same order of convergence.
\end{abstract}

\begin{keywords}
Neumann boundary control problem, semilinear elliptic equation, control constraints, corner singularities, weighted Sobolev spaces, finite element method, error estimates, boundary estimates, quasi-uniform meshes, graded meshes, postprocessing, superconvergence
\end{keywords}
\begin{AMSMOS}
  65N30; 49K20, 49M25, 65N15, 65N50
\end{AMSMOS}

\section{Introduction}
In this paper we study discretization error estimates for the following Neumann boundary control problem governed by a semilinear elliptic partial differential equation:
\begin{gather}\label{costf}
\min F(y,u):=\displaystyle\frac{1}{2}\|y-y_d\|^2_{L^2(\O)}+\dst\frac{\nu}{2}\|u\|^2_{L^2(\G)}\\\label{stateeq}
\begin{aligned}
-\Delta y+d(x,y)&=0\quad\text{in }\Omega\\
\partial_n y&=u\quad\text{on }\Gamma
\end{aligned}\\\label{cconstr}
u\in U_{ad}:=\{u\in L^2(\Gamma):u_a\leq u\leq u_b \text{ a.e. on }\Gamma\}.
\end{gather}
In all what follows we denote the optimal control problem \eqref{costf}-\eqref{cconstr} by (P).
The precise conditions on all other given quantities in (P) are collected in the assumptions (A\ref{A2})-(A\ref{A6}) below.

We will discuss the full discretization of the optimal control problem combined with a postprocessing step, i.e., the state and the adjoint state are discretized by linear finite elements and the control by piecewise constant functions. Afterwards, approximations of locally optimal controls of the continuous optimal control problem are constructed which possess superconvergence properties.
This concept was established by Meyer and R\"osch in \cite{MeyerRoesch:2005} for linear-quadratic optimal control problems with distributed controls and a convergence order of $2$ in the $L^2(\O)$-norm was proven in convex domains. Using mesh grading techniques Apel, R\"osch and Winkler could prove in \cite{ApelRoeschWinkler:2007} the same convergence order for non convex polygonal domains. In a recent contribution by Mateos and R\"osch \cite{MateosRoesch:2008} this approach was extended to linear quadratic Neumann boundary control problems and a convergence rate of $\min(2,2-1/p)$ in the $L^2(\Gamma)$-norm was proven in convex domains with some $p$ satisfying $2<p<2\omega/(2\omega-\pi)$, where $\omega$ denotes the largest interior angle of the polygonal domain. 
Furthermore, for non convex domains a convergence rate of $1/2+\pi/\omega$ was proven.
This means that the convergence rate is lower than $3/2$ in the non convex case and decreases if the largest inner angle of the domain increases. Appropriately graded meshes in the neighborhood of the reentrant corners were used by Apel, Pfefferer and R\"osch in \cite{ApelPfeffererRoesch:2010} in order to prove an error bound of $ch^{3/2}$. This convergence rate was improved to $ch^2|\ln h|^{3/2}$ in a very recent contribution of Apel, Pfefferer and R\"osch \cite{ApelPfeffererRoesch:2012} using a new finite element error estimate on the boundary. Note, that only domains with interior angles larger than $2\pi/3$ need meshes which are appropriately graded to get this result. In the present work we combine the results derived in \cite{ApelPfeffererRoesch:2012} with techniques used in \cite{AradaCasasTroeltzsch:2002,CasasMateosTroeltzsch:2005} to prove optimal error estimates as discussed in~\cite{ApelPfeffererRoesch:2012} for the linear case.

Before we summarize the structure of the paper, let us give an overview on relevant literature concerning discretization of optimal control problems: we mention the contributions by Falk \cite{Falk:1973}, Geveci \cite{Geveci:1979}, Malanowski \cite{Malanowski:1982}, Arada, Casas and Tr\"oltzsch \cite{AradaCasasTroeltzsch:2002} and Casas, Mateos and Tr\"oltzsch \cite{CasasMateosTroeltzsch:2005} regarding the approximation by piecewise constant functions. For the usage of piecewise linear controls we refer to Casas and Tr\"oltzsch \cite{CasasTroeltzsch:2003}, Meyer and R\"osch \cite{MeyerRoesch:2005-2}, R\"osch \cite{Roesch:2006}, Casas and Mateos \cite{CasasMateos:2008} and the references therein. Convergence results and error estimates for elliptic optimal control problems governed by semilinear equations are especially derived in Arada, Casas and Tr\"oltzsch \cite{AradaCasasTroeltzsch:2002}, Casas, Mateos and Tr\"oltzsch \cite{CasasMateosTroeltzsch:2005} and Casas and Mateos \cite{CasasMateos:2008}.
For the variational discretization concept we refer to Hinze~\cite{Hinze:2005} in case of distributed control problems and to Casas and Mateos~\cite{CasasMateos:2008}, Mateos and R\"osch~\cite{MateosRoesch:2008}, Hinze and Matthes~\cite{HinzeMatthes:2008} and Apel, Pfefferer and R\"osch \cite{ApelPfeffererRoesch:2012} in case of Neumann boundary control problems. Using this concept in the context of linear elliptic Neumann boundary control problems one can achieve a discretization error bound of $ch^2|\ln h|^{3/2}$ on quasi-uniform meshes if the largest interior angle is smaller the $2\pi/3$. For larger interior angles one has to use appropriately graded meshes to deduce this result, cf. \cite{ApelPfeffererRoesch:2012}. In case of semilinear elliptic Neumann boundary control problems a convergence order of about $3/2$ is proven in~\cite{CasasMateos:2008} for this concept. But one can use the finite element error estimates on the boundary of~\cite{ApelPfeffererRoesch:2012} to derive the improved 
discretization error estimates as in the linear elliptic case.

The paper is organized as follows:
In Section 2 we introduce suitable weighted Sobolev space prescribing the regularity of solutions of elliptic boundary value problems. Moreover, we present first order necessary and second order sufficient optimality conditions for a local optimal solution of problem (P). Section 3 concerns the discretization of problem (P) and the establishment of a known uniform convergence results for solutions of the fully discretized problem to solutions of the continuous one. In Section 4 we elaborate several auxiliary results that are necessary in order to prove the superconvergence properties of the fully discrete counterpart of (P) in Section 5. Numerical experiments in the last section illustrate the proven results of the paper.

In the sequel $c$ denotes a generic constant which is always independent of the mesh parameter $h$.
\section{Optimality conditions and regularity results for problem (P)}
Throughout this paper let $\O$ be a bounded, two dimensional polygonal domain with Lipschitz boundary $\G$ and $m$ corner points $x^{(j)}$, $j=1,\dots,m$, counting counter-clockwise. In particular, $\G_j$ denotes the part of the boundary which connects the corners $x^{(j)}$ and $x^{(j+1)}$ except that $x^{(1)}$ is the intersection of $\bar\G_m$ and $\bar\G_1$. The angle between $\G_{j-1}$ and $\G_j$ is denoted by $\omega_j$ with the obvious modification for $\omega_1$.
Next, let us state some basic assumptions on the data of problem (P), which we require in the sequel.
\begin{assumption}\label{Psetting}
\begin{enumerate}[({A}1)]
\item \label{A2} The function $y_d\in C^{0,\sigma}(\bar\O)$ is given for some $\sigma>0$.
\item The regularization parameter $\nu>0$ and the bounds $u_a\le u_b$ are fixed real numbers.
\item \label{A4}The function $d=d(x,y): \Omega\times\mathbb{R}$ is measurable with respect
to $x\in \Omega$ for all fixed $y\in \mathbb{R}$, and twice continuously differentiable with
respect to $y$, for almost all $x \in\Omega$. Moreover,  we require $d(\cdot,0)\in L^2(\O),\,\frac{\partial d}{\partial y}(\cdot,0)\in C^{0,\sigma}(\bar \Omega) \text{with some }\sigma>0,\,\frac{\partial^2 d}{\partial y^2}(\cdot,0)\in L^\infty(\O)$ and 
\[
  \frac{\partial d}{\partial y}(x,y)\ge0\quad\text{for a.a. }x\in\O\text{ and }y\in\mathbb{R}.
\]
The derivatives of $d$ w.r.t. $y$ up to order two are uniformly Lipschitz on bounded sets, i.e. for all $M>0$
there exists $L_{d,M}>0$ such that $d$ satisfies
\begin{equation*}
\left\|\frac{\partial^2 d}{\partial
y^2}(\cdot,y_1)-\frac{\partial^2 d}{\partial
y^2}(\cdot,y_2)\right\|_{L^\infty(\Omega)}\le L_{d,M}|y_1-y_2|
\end{equation*}
for all $y_i \in \mathbb{R}$ with $|y_i|\le M$, $i=1,2$.
\item \label{A6} There is a subset $E_\O\subset\O$ of positive measure and a constant $c_\Omega>0$ such that $\frac{\partial d}{\partial y}(x,y)\geq c_\Omega$ in $E_\O\times \mathbb{R}$.
\end{enumerate}
\end{assumption}
To shorten the notation we will abbreviate $\frac{\partial d}{\partial y}$ and $\frac{\partial^2 d}{\partial y^2}$ by $d_y$ and $d_{yy}$, respectively.
Now let us begin with the study of the state equation. In general the regularity of the solution of a semilinear elliptic boundary value problem in polygonal domains is limited due to the appearance of corner singularities. If one uses classical Sobolev-Slobodetskij spaces $W^{s,p}(\O)$ to describe the regularity then this effect is reflected by the dependence of the parameters $s$ and $p$ on the size of the interior angles of the domain, compare e.g. \cite{Dauge:1988} and \cite{Grisvard:1985}. Instead, we will use weighted Sobolev spaces which incorporate better the singular behavior caused by the corners. The following exposition follows~\cite{ApelPfeffererRoesch:2012}. Let $\O_{R_j}$ and $\O_{R_j/2}$ be circular sectors which have the opening angle $\omega_j$ and the radii $R_j$ and $R_j/2$, respectively. These sectors are centered at the corners $x^{(j)}$ of the domain. The radii can be chosen arbitrarily with the only restriction that the circular sectors $\O_{R_j}$ do not overlap. The sides of the 
circular 
sectors $\O_{R_j}$ which coincide 
locally with the boundary $\G$ are denoted by $\G_j^+$ ($\varphi_j=\omega_j$) and $\G_j^-$ ($\varphi_j=0$) where $r_
j$ and $\varphi_j$ are the polar coordinates located at the corner point $x^{(j)}$. Furthermore, we define $\G_j^\pm=\G_j^+\cup\G_j^-$ and we set
\[
  \O^0=\O\backslash\bigcup_{j=1}^m\O_{R_j/2}\quad\text{and}\quad \G^0=\G\cap\bar\O^0.
\]
Next, we introduce the weighted Sobolev spaces. Let $\vec{\beta}=(\beta_1,\dots,\beta_m)^T$ be a real-valued vector. We define for $k\in\mathbb{N}_0$ and $p\in[1,\infty]$ the weighted Sobolev spaces $W^{k,p}_{\vec{\beta}}(\O)$ as the set of all functions on $\O$ with finite norm
\[
  \|v\|_{W^{k,p}_{\vec{\beta}}(\O)}=\|v\|_{W^{k,p}(\O^0)}+\sum\limits_{j=1}^m\|v\|_{W^{k,p}_{\beta_j}(\O_{R_j})},
\]
where the Sobolev spaces $W^{k,p}(\O)\ (=H^k(\O)\text{ for }p=2)$ are defined as usual. By means of standard multi-index notation the weighted parts are defined by
\begin{align*}
  \|v\|_{W^{k,p}_{\beta_j}(\O_{R_j})}&=\left(\sum\limits_{|\alpha|\le k}\|r_j^{\beta_j}D^\alpha v\|_{L^p(\O_{R_j})}^p\right)^{1/p}\quad \text{for } 1\leq p< \infty,\\
  \|v\|_{W^{k,\infty}_{\beta_j}(\O_{R_j})}&=\sum\limits_{|\alpha|\le k}\|r_j^{\beta_j}D^\alpha v\|_{L^\infty(\O_{R_j})}.
\end{align*}
For $k\ge1$ the corresponding trace spaces are denoted by $W^{k-1/p,p}_{\vec{\beta}}(\G)$ and the norm is given by
\[
  \|v\|_{W^{k-1/p,p}_{\vec{\beta}}(\G)}=\inf\{\|u\|_{W^{k,p}_{\vec{\beta}}(\O)}:\,u\in W^{k,p}_{\vec{\beta}}(\O),\,u|_{\G\setminus \mathcal{C}}=v \},
\]
where we denote with $\mathcal{C}$ the set of all corner points. Furthermore, we define the space $W^{k,p}_{\vec{\beta}}(\G)$ for $k\in \mathbb{N}_0$ and $p\in [1,\infty]$ by the norm
\[
  \|v\|_{W^{k,p}_{\vec{\beta}}(\G)}=\|v\|_{W^{k,p}(\G^0)}+\sum\limits_{j=1}^m\|v\|_{W^{k,p}_{\beta_j}(\G_j^\pm)}
\]
with
\begin{align*}
  \|v\|_{W^{k,p}_{\beta_j}(\Gamma_j^\pm)}&=\left(\sum_{|\alpha|\leq k}\left(\|r_j^{\beta_j}\partial_t^\alpha v\|_{L^p(\Gamma_j^+)}^p+\|r_j^{\beta_j}\partial_t^\alpha v\|_{L^p(\Gamma_j^-)}^p\right)\right)^{1/p} \quad \text{if } 1\leq p <\infty,\\
  \|v\|_{W^{k,\infty}_{\beta_j}(\Gamma_j^\pm)}&=\sum_{|\alpha|\leq k}\left(\|r_j^{\beta_j}\partial_t^\alpha v\|_{L^\infty(\Gamma_j^+)}+\|r_j^{\beta_j}\partial_t^\alpha v\|_{L^\infty(\Gamma_j^-)}\right).
\end{align*}
Note, that $\partial_t v$ denotes the tangential derivative of $v$. The semi-norms
\[|\cdot|_{W^{k,p}_{\vec\beta}(\Omega)} \text{ and } |\cdot|_{W^{k,p}_{\vec\beta}(\Gamma)}\]
are analogously defined to the classical Sobolev semi-norms.

As usual, we denote with $C^k(\bar \O)$ the set of all functions on $\O$ with bounded and uniformly continuous derivatives up to order $k$. The H\"older space $C^{k,\sigma}(\bar \O)$ additionally possesses bounded derivatives of order $k$ which are H\"older continuous with H\"older exponent $\sigma\in(0,1]$.

We proceed with studying regularity results concerning linear and semilinear elliptic partial differential equations in classical and weighted Sobolev spaces.
\begin{lemma}\label{pro:reglinearPDE}
Let $E_\Omega$ be a subset of $\Omega$ with $|E_\Omega|>0$ and let $m,M$ be a constants greater than zero. Moreover, let $\alpha$ be a function in $L^\infty(\O)$ with $\alpha(x)\geq 0$ for a.a. $x \in \Omega$, $\alpha(x)\geq m$ for a.a. $x\in E_\Omega$ and $\|\alpha\|_{L^\infty(\Omega)}\leq M$. Then the problem 
\begin{equation}\label{linPDE}
	\begin{aligned}
	-\Delta \phi+\alpha\phi&=f\quad\text{in }\Omega\\
	\partial_n \phi&=g\quad\text{on }\Gamma
	\end{aligned}
\end{equation}
admits a unique solution $\phi$ in
\begin{enumerate}[(i)]
  \item \label{linReg1} $H^{3/2}(\O)$ for $f\in L^2(\Omega)$ and $g\in L^2(\Gamma)$.
  \item \label{linReg2} $W^{2,2}_{\vec{\beta}}(\O)$ for $f\in W^{0,2}_{\vec{\beta}}(\O)$ and $g\in W^{1/2,2}_{\vec{\beta}}(\G)$ where $\beta_j$ has to satisfy
	\begin{equation}\label{condbeta}
		1>\beta_j>\max(0,1-\lambda_j) \quad \text{or} \quad \beta_j=0 \text{ and } 1-\lambda_j<0
	\end{equation}
	with $\lambda_j=\pi/\omega_j$ for $j=1,\dots,m$. Furthermore, the a priori estimate
	\[
	  \|\phi\|_{W^{2,2}_{\vec\beta}(\O)}\leq c \left(\|f\|_{W^{0,2}_{\vec\beta}(\O)}+\|g\|_{W^{1/2,2}_{\vec\beta}(\G)}\right)
	\]
	is valid with a constant $c$ which may depend on $m$ and $M$ but is independent of $\alpha$.
\end{enumerate}
\end{lemma}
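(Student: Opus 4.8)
The plan is to first establish existence, uniqueness and an $H^1$-bound that is uniform in $\alpha$ via the Lax--Milgram lemma, and then to obtain the asserted higher regularity by moving the term $\alpha\phi$ to the right-hand side and invoking the classical regularity theory for the Neumann problem for $-\Delta$ in polygonal domains; the delicate point throughout is that no constant may depend on $\alpha$.

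\textbf{Step 1 ($\alpha$-uniform $H^1$-solvability).} I would work with the bilinear form $a(\phi,\psi):=(\nabla\phi,\nabla\psi)_{L^2(\Omega)}+(\alpha\phi,\psi)_{L^2(\Omega)}$ on $H^1(\Omega)$. Since $\alpha\ge0$ a.e.\ in $\Omega$ and $\alpha\ge m$ a.e.\ in $E_\Omega$, one has $a(\phi,\phi)\ge\|\nabla\phi\|_{L^2(\Omega)}^2+m\,\|\phi\|_{L^2(E_\Omega)}^2$, and a standard compactness argument (Rellich's theorem) provides a generalized Poincar\'e inequality $\|\phi\|_{H^1(\Omega)}^2\le c\,(\|\nabla\phi\|_{L^2(\Omega)}^2+\|\phi\|_{L^2(E_\Omega)}^2)$ with $c$ depending only on $\Omega$ and $E_\Omega$. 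Hence $a$ is coercive with a constant depending only on $m$, while boundedness, $|a(\phi,\psi)|\le(1+M)\|\phi\|_{H^1(\Omega)}\|\psi\|_{H^1(\Omega)}$, is immediate. Lax--Milgram then yields a unique weak solution $\phi\in H^1(\Omega)$. For $f\in L^2(\Omega)$, $g\in L^2(\Gamma)$ the trace theorem gives the $\alpha$-uniform bound $\|\phi\|_{H^1(\Omega)}\le c(\|f\|_{L^2(\Omega)}+\|g\|_{L^2(\Gamma)})$; for data $f\in W^{0,2}_{\vec\beta}(\Omega)$, $g\in W^{1/2,2}_{\vec\beta}(\Gamma)$ the same argument applies because \eqref{condbeta} forces $\beta_j<1$, so that Hardy-type weighted inequalities make $\psi\mapsto(f,\psi)_{L^2(\Omega)}+(g,\psi)_{L^2(\Gamma)}$ a bounded functional on $H^1(\Omega)$ of norm $\le c(\|f\|_{W^{0,2}_{\vec\beta}(\Omega)}+\|g\|_{W^{1/2,2}_{\vec\beta}(\Gamma)})$, whence $\|\phi\|_{H^1(\Omega)}\le c(\|f\|_{W^{0,2}_{\vec\beta}(\Omega)}+\|g\|_{W^{1/2,2}_{\vec\beta}(\Gamma)})$ with $c$ independent of $\alpha$.

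\textbf{Step 2 (bootstrap).} For part (i), set $\tilde f:=f-\alpha\phi\in L^2(\Omega)$, using $\phi\in H^1(\Omega)\hookrightarrow L^2(\Omega)$ and $\alpha\in L^\infty(\Omega)$; then $\phi$ solves $-\Delta\phi=\tilde f$ in $\Omega$, $\partial_n\phi=g$ on $\Gamma$, and the $H^{3/2}(\Omega)$-regularity of the Neumann problem for $-\Delta$ in a polygonal domain with $L^2$ source and $L^2$ boundary data (see \cite{ApelPfeffererRoesch:2012,Grisvard:1985} and the references therein) gives $\phi\in H^{3/2}(\Omega)$. For part (ii), note that $\beta_j\ge0$ implies $L^2(\Omega)\hookrightarrow W^{0,2}_{\vec\beta}(\Omega)$, hence $\alpha\phi\in W^{0,2}_{\vec\beta}(\Omega)$ and $\tilde f=f-\alpha\phi\in W^{0,2}_{\vec\beta}(\Omega)$, while $g\in W^{1/2,2}_{\vec\beta}(\Gamma)$. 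Under \eqref{condbeta} the corner exponents $\lambda_j=\pi/\omega_j$ avoid the critical strip, so the weighted regularity theory for the Neumann Laplacian (of Kondrat'ev--Maz'ya--Rossmann type, as used in \cite{ApelPfeffererRoesch:2012}; see also \cite{Dauge:1988}) applies and yields $\phi\in W^{2,2}_{\vec\beta}(\Omega)$ with $\|\phi\|_{W^{2,2}_{\vec\beta}(\Omega)}\le c\bigl(\|\tilde f\|_{W^{0,2}_{\vec\beta}(\Omega)}+\|g\|_{W^{1/2,2}_{\vec\beta}(\Gamma)}+\|\phi\|_{L^2(\Omega)}\bigr)$, the constant depending only on $\Omega$ and $\vec\beta$. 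Bounding $\|\tilde f\|_{W^{0,2}_{\vec\beta}(\Omega)}\le\|f\|_{W^{0,2}_{\vec\beta}(\Omega)}+cM\|\phi\|_{L^2(\Omega)}$ and inserting the $\alpha$-uniform $H^1$-bound from Step~1 gives the claimed a priori estimate with $c$ depending on $m$ and $M$ only, not on $\alpha$.

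\textbf{Expected main obstacle.} The analytic core is the fractional and weighted regularity theory for the Neumann Laplacian invoked in Step~2, where \eqref{condbeta} is exactly the no-singular-exponent-in-the-critical-strip condition; this, however, is classical and can be quoted. The genuinely delicate point specific to this lemma is the uniformity in $\alpha$: it rests on the compactness-based generalized Poincar\'e inequality (making the coercivity constant depend only on $m$) and on controlling the perturbation $\alpha\phi$ exclusively through the $\alpha$-uniform $H^1$-bound, so that only $M$, and never $\alpha$ itself, enters the final constant. A minor additional care is needed to see that $\beta_j<1$ (forced by \eqref{condbeta}) is indeed enough for the weighted right-hand side to be admissible in the $H^1$-framework.
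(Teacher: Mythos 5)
Your proposal is correct and follows essentially the same route as the paper: Lax--Milgram with coercivity coming from $\alpha\ge m$ on $E_\Omega$ (and, for the weighted data, continuity of the right-hand side functional on $H^1(\Omega)$, which the paper takes from Maz'ya--Rossmann), followed by moving the zeroth-order term to the right-hand side and quoting the $H^{3/2}$ respectively weighted $W^{2,2}_{\vec\beta}$ regularity of the Neumann problem, with the $\alpha$-uniform $H^1$ bound controlling the perturbation. The only cosmetic difference is that the paper rewrites the equation as $-\Delta\phi+\phi=f+(1-\alpha)\phi$ so as to apply the cited regularity result for $-\Delta+1$ directly, whereas you work with the pure Neumann Laplacian and carry an extra $\|\phi\|_{L^2(\Omega)}$ term in the a priori estimate; both are fine.
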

\begin{proof}
(i) The existence of a unique solution $\phi\in H^1(\Omega)$ of~\eqref{linPDE} can be deduced from the Lax-Milgram Theorem. The first assertion is then a consequence of~\cite{JerisonKenig:1981} or~\cite[Lemma 2.2]{CasasMateosTroeltzsch:2005} since $f-\alpha\phi$ belongs to $L^2(\Omega)$.

\noindent (ii) The functional
\[
  F(v)=\into{fv} + \intg{gv}
\]
is continuous on $H^1(\Omega)$ for arbitrary $f\in W^{0,2}_{\vec{\beta}}(\O)$ and $g\in W^{1/2,2}_{\vec{\beta}}(\G)$ with $\beta_j$ satisfying~\eqref{condbeta} for $j=1,\ldots,m$, cf. \cite[Lemma 6.3.1]{MazyaRossmann:2010}. Thus, the existence of a unique solution $\phi\in H^1(\Omega)$ of~\eqref{linPDE} is again given by the Lax-Milgram Theorem. Furthermore, there holds, independent of $\alpha$,
\begin{align*}
  \|f+(1-\alpha)\phi\|_{W^{0,2}_{\vec\beta}(\Omega)}&\leq \|f\|_{W^{0,2}_{\vec{\beta}}(\O)}+c\|\phi\|_{L^2(\Omega)}\leq\|f\|_{W^{0,2}_{\vec{\beta}}(\O)}+c\|\phi\|_{H^1(\Omega)}\\
  &\leq c\left(\|f\|_{W^{0,2}_{\vec{\beta}}(\O)}+\|g\|_{W^{1/2,2}_{\vec{\beta}}(\G)}\right).
\end{align*}
Combining this with~\cite[Lemma 2.4]{ApelPfeffererRoesch:2012} yields the second assertion.
\end{proof}

\begin{lemma}\label{genPDEsol}
Let the Assumptions (A\ref{A4})-(A\ref{A6}) be satisfied. Then the problem
\begin{equation*}
\begin{aligned}
-\Delta \phi+d(x,\phi)&=f\quad\text{in }\Omega\\
\partial_n \phi&=g\quad\text{on }\Gamma
\end{aligned}
\end{equation*}
has a unique solution $\phi$ which belongs to
\begin{enumerate}[(i)]
  \item \label{Reg1} $H^{3/2}(\O)$ for $f\in L^2(\Omega)$ and $g\in L^2(\Gamma)$.
  \item \label{Reg2} $W^{2,2}_{\vec{\beta}}(\O)$ for $f\in W^{0,2}_{\vec{\beta}}(\O)$ and $g\in W^{1/2,2}_{\vec{\beta}}(\G)$ with $\beta_j$ satisfying~\eqref{condbeta}.
\end{enumerate}
\end{lemma}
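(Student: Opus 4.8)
The plan is to bootstrap the linear theory of Lemma~\ref{pro:reglinearPDE}, so the first task is to produce a weak solution $\phi\in H^1(\O)$ of the semilinear problem. The weak formulation seeks $\phi\in H^1(\O)$ with
$\into{\nabla\phi\cdot\nabla v}+\into{d(x,\phi)\,v}=\into{fv}+\intg{gv}$
for all $v\in H^1(\O)$. Because $d_{yy}$ is bounded on bounded sets by (A\ref{A4}), $d(x,\cdot)$ grows at most quadratically; together with the two–dimensional embeddings $H^1(\O)\hookrightarrow L^q(\O)$ and $H^{1/2}(\G)\hookrightarrow L^s(\G)$ for every finite $q,s$ (and, dually, the embeddings $W^{0,2}_{\vec\beta}(\O)\hookrightarrow L^q(\O)$ resp. $W^{1/2,2}_{\vec\beta}(\G)\hookrightarrow L^s(\G)$ for some $q,s>1$ when $\beta_j$ satisfies~\eqref{condbeta}, cf.\ \cite[Lemma~6.3.1]{MazyaRossmann:2010}) this shows the left-hand side defines a bounded, continuous operator $H^1(\O)\to(H^1(\O))^*$ and the right-hand side a bounded functional, in both settings (i) and (ii). This operator is strictly monotone and coercive: monotonicity is immediate from $d_y\ge0$, and coercivity follows from the lower bound $d_y\ge c_\O$ on $E_\O$ combined with the generalized Poincar\'e inequality $\|v\|_{H^1(\O)}^2\le c\big(\|\nabla v\|_{L^2(\O)}^2+\|v\|_{L^2(E_\O)}^2\big)$. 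The Browder--Minty theorem then yields a unique $\phi\in H^1(\O)$ (alternatively one may invoke \cite[Lemma~2.2]{CasasMateosTroeltzsch:2005}).

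For part~(\ref{Reg1}), since $\phi\in H^1(\O)\hookrightarrow L^q(\O)$ for all $q<\infty$ in two dimensions and $d(\cdot,\phi)$ grows at most quadratically, $d(\cdot,\phi)\in L^2(\O)$. Adding $\phi$ to both sides, $\phi$ is the weak solution of the \emph{linear} problem $-\Delta\phi+\phi=f-d(\cdot,\phi)+\phi$ in $\O$, $\partial_n\phi=g$ on $\G$, whose right-hand side lies in $L^2(\O)$ and whose Neumann datum lies in $L^2(\G)$. Applying Lemma~\ref{pro:reglinearPDE}(\ref{linReg1}) with $\alpha\equiv1$ gives $\phi\in H^{3/2}(\O)$.

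For part~(\ref{Reg2}), part~(\ref{Reg1}) and the embedding $H^{3/2}(\O)\hookrightarrow C(\bar\O)$ (valid in two dimensions) give $\phi\in L^\infty(\O)$. Setting $a(x):=\int_0^1 d_y\big(x,t\phi(x)\big)\,dt$, one has $d(x,\phi(x))=d(x,0)+a(x)\phi(x)$, with $a\in L^\infty(\O)$ (by (A\ref{A4}), using $d_y(\cdot,0)\in C^{0,\sigma}(\bar\O)$, $d_{yy}$ bounded on bounded sets, and $\phi\in L^\infty(\O)$), $a\ge0$ a.e.\ by the monotonicity in (A\ref{A4}), and $a\ge c_\O$ on $E_\O$ by (A\ref{A6}). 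Hence $\phi$ solves the linear equation $-\Delta\phi+a\phi=f-d(\cdot,0)$ in $\O$, $\partial_n\phi=g$ on $\G$. Since $\beta_j\ge0$ one has $L^2(\O)\hookrightarrow W^{0,2}_{\vec\beta}(\O)$, so $f-d(\cdot,0)\in W^{0,2}_{\vec\beta}(\O)$ by (A\ref{A4}) and the hypothesis $f\in W^{0,2}_{\vec\beta}(\O)$, while $g\in W^{1/2,2}_{\vec\beta}(\G)$ by assumption, with $\beta_j$ satisfying~\eqref{condbeta}; Lemma~\ref{pro:reglinearPDE}(\ref{linReg2}) with $\alpha=a$ then yields $\phi\in W^{2,2}_{\vec\beta}(\O)$. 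I expect the only genuinely delicate point to be the existence step, namely verifying that the Nemytskii perturbation $v\mapsto d(\cdot,v)$ is a well-defined, continuous, strictly monotone and coercive perturbation of $-\Delta$ on $H^1(\O)$ in spite of the possibly superlinear growth of $d$ — this is exactly where the two-dimensional Sobolev embeddings and the lower bound (A\ref{A6}) enter — and, in case~(\ref{Reg2}), that the merely weighted data still define bounded functionals on $H^1(\O)$; once $\phi\in H^1(\O)$ is secured, the regularity bootstrap via Lemma~\ref{pro:reglinearPDE} is routine.
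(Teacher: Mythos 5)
There is a genuine gap, and it sits exactly where you flagged the delicacy: the existence/boundedness step. Your argument rests on the claim that (A\ref{A4}) forces at most quadratic growth of $d(x,\cdot)$, because ``$d_{yy}$ is bounded on bounded sets.'' But (A\ref{A4}) only gives $d_{yy}(\cdot,0)\in L^\infty(\O)$ together with a Lipschitz constant $L_{d,M}$ that may blow up with $M$; nothing prevents, say, $d(x,y)=y+e^{y^3}$, which satisfies (A\ref{A4}) and (A\ref{A6}) (with $d_y\ge 1$) but has arbitrary superexponential growth. For such $d$ the Nemytskii map $v\mapsto d(\cdot,v)$ is not even defined from $H^1(\O)$ into $(H^1(\O))^*$ in two dimensions (Moser--Trudinger only controls $e^{c v^2}$, and $e^{v^3}$ need not be integrable for $v\in H^1(\O)$), so the Browder--Minty argument as you set it up does not go through, and in part~(\ref{Reg1}) the deduction ``$\phi\in L^q(\O)$ for all $q<\infty$ plus quadratic growth $\Rightarrow d(\cdot,\phi)\in L^2(\O)$'' also fails. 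The paper avoids this entirely by invoking the semilinear existence theory of \cite{Casas:1993}, which yields a unique solution $\phi\in H^1(\O)\cap C^0(\bar\O)$; once $\phi$ is known to be bounded, only local bounds on $d$ are needed and the bootstrap through Lemma~\ref{pro:reglinearPDE} (with $\alpha\equiv1$ and right-hand side $f-d(\cdot,\phi)+\phi$) is indeed routine. To repair your proof you would need either a truncation/$L^\infty$ a priori estimate argument of Stampacchia type or simply the citation the paper uses; the reference you offer instead (\cite[Lemma~2.2]{CasasMateosTroeltzsch:2005}) concerns the linear problem and does not supply semilinear existence.

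A second, independent gap occurs in your part~(\ref{Reg2}): you obtain $\phi\in L^\infty(\O)$ by ``part~(\ref{Reg1}) and $H^{3/2}(\O)\hookrightarrow C(\bar\O)$,'' but part~(\ref{Reg1}) requires $f\in L^2(\O)$ and $g\in L^2(\G)$, and the weighted data of part~(\ref{Reg2}) need not satisfy this: for $\beta_j>0$ one has $L^2(\O)\hookrightarrow W^{0,2}_{\vec\beta}(\O)$, not the reverse (e.g.\ $f\sim r_j^{-1/2-\beta_j}$ near a corner lies in $W^{0,2}_{\vec\beta}$ but not in $L^2$), and similarly $W^{1/2,2}_{\vec\beta}(\G)\not\subset L^2(\G)$ when $\beta_j\ge 1/2$, which condition \eqref{condbeta} allows at nonconvex corners. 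The paper's route here is to use the embeddings $W^{0,2}_{\vec\beta}(\O)\hookrightarrow L^r(\O)$ and $W^{1/2,2}_{\vec\beta}(\G)\hookrightarrow L^t(\G)$ for some $r,t>1$ (\cite{Rossmann:1988}) and again \cite{Casas:1993} to get $\phi\in H^1(\O)\cap C^0(\bar\O)$ directly from the weighted data. Your subsequent linearization with $a(x)=\int_0^1 d_y(x,t\phi(x))\,dt$ and the application of Lemma~\ref{pro:reglinearPDE}(\ref{linReg2}) with $\alpha=a$ is a perfectly acceptable variant of the paper's $\alpha\equiv1$ bootstrap, but it only becomes available after $\phi\in L^\infty(\O)$ has been secured by one of these means.
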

\begin{proof}
(i) Due to the Assumptions (A\ref{A4})-(A\ref{A6}), it is classical to show the existence of a unique solution $\phi\in H^1(\O)\cap C^0(\bar\O)$ for right hand sides $f\in L^2(\O)$ and $g\in L^2(\G)$, see e.g. \cite{Casas:1993}. 
We deduce by Assumption (A\ref{A4}) that $f-d(\cdot,\phi)+\phi$ belongs to $L^2(\Omega)$. The assertion follows now from Lemma~\ref{pro:reglinearPDE}~(\ref{linReg1}) with $\alpha\equiv 1$.

\noindent(ii) First, we observe that there exist $r,t>1$ such that
\[
	W^{0,2}_{\vec\beta}(\Omega)\hookrightarrow L^r(\Omega)\quad \text{and}\quad W^{1/2,2}_{\vec\beta}(\G)\hookrightarrow L^t(\G)
\]
provided that $\beta_j$ satisfies~\eqref{condbeta} for $j=1,\ldots,m$, cf.~\cite{Rossmann:1988}. Thus the existence of a unique solution $\phi\in H^1(\O)\cap C^0(\bar\O)$ is again given by~\cite{Casas:1993}. Since $H^{1}(\O)\hookrightarrow L^2(\O)\hookrightarrow W^{0,2}_{\vec{\beta}}(\O)$ for $\beta_j$ satisfying~\eqref{condbeta} we obtain $f-d(\cdot,\phi)+\phi\in W^{0,2}_{\vec{\beta}}(\O)$ from~(A\ref{A4}). According to Lemma~\ref{pro:reglinearPDE}~(\ref{linReg2}) we can conclude the stated regularity if we set $\alpha\equiv 1$.
\end{proof}
Based on the last lemma we can introduce the control-to-state operator
\begin{equation}\label{G}
	G: L^2(\G)\rightarrow H^{3/2}(\Omega),\, G(u)=y,
\end{equation}
that assigns to every control $u$ the unique solution $y$ of the state equation \eqref{stateeq}. By this we can reformulate problem (P) and we obtain its reduced formulation
\[
  \min_{u\in U_{ad}} J(u):=F(G(u),u)=\displaystyle\frac{1}{2}\|G(u)-y_d\|^2_{L^2(\O)}+\dst\frac{\nu}{2}\|u\|^2_{L^2(\G)}.
\]
To indicate the dependence of the state $y$ on the control $u$ we will also write $y(u)$ for $G(u)$ in the sequel. Note that an extension of the control-to-state operator to the previously defined weighted Sobolev spaces is not necessary for the formulation of classical optimality conditions for problem (P). But first, let us discuss the differentiability properties of the control-to-state mapping.
\begin{theorem}\label{diffG}
Let the Assumptions (A\ref{A4})-(A\ref{A6}) be satisfied. Then the mapping $G:L^2(\G)\rightarrow H^{3/2}(\Omega)$, defined by~\eqref{G} is of class
$C^2$. Moreover, for all $u,\,v\in L^2(\G)$, $y_v=G'(u)v$ is defined as the
solution of
\begin{equation*}
\begin{aligned}
-\Delta y_v+d_y(x,y)y_v&=0\quad \mbox{in }\O\\
\partial_{n}y_v&=v\quad\mbox{on }\G
\end{aligned}
\end{equation*}
Furthermore, for every $v_1,v_2\in L^2(\G)$, $y_{v_1,v_2}=G''(u)[v_1,v_2]$ is
the solution of
\begin{equation*}
\begin{aligned}
-\Delta y_{v_1,v_2}+d_y(x,y)y_{v_1,v_2}&=-d_{yy}(x,y)y_{v_1}y_{v_2}&&\quad
\mbox{in }\O\\
\partial_{n}y_{v_1,v_2}&=0&&\quad\mbox{on }\G,
\end{aligned}
\end{equation*}
where $y_{v_i}=G'(u)v_i,\,i=1,2$.
\end{theorem}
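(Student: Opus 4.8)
The plan is to recast the state equation as a fixed-point identity built from the solution operator of a shifted linear Neumann problem and a superposition operator, and then to apply the implicit function theorem in $H^{3/2}(\Omega)$. Let $\mathcal{L}^{-1}\colon L^2(\Omega)\times L^2(\Gamma)\to H^{3/2}(\Omega)$ be the bounded linear operator sending $(f,g)$ to the unique solution of $-\Delta\phi+\phi=f$ in $\Omega$, $\partial_n\phi=g$ on $\Gamma$, whose $H^{3/2}$-regularity is Lemma~\ref{pro:reglinearPDE}~(\ref{linReg1}) with $\alpha\equiv1$. Using the embedding $H^{3/2}(\Omega)\hookrightarrow C^{0,1/2}(\bar\Omega)\hookrightarrow C^0(\bar\Omega)$ valid on the polygon $\Omega$, the state equation \eqref{stateeq} is, for $y\in H^{3/2}(\Omega)$ and $u\in L^2(\Gamma)$, equivalent to $\mathcal{T}(y,u)=0$ with
\[
  \mathcal{T}\colon H^{3/2}(\Omega)\times L^2(\Gamma)\to H^{3/2}(\Omega),\qquad \mathcal{T}(y,u):=y-\mathcal{L}^{-1}\bigl(y-d(\cdot,y),\,u\bigr),
\]
and Lemma~\ref{genPDEsol}~(\ref{Reg1}) guarantees that for each $u$ this has the unique solution $y=G(u)$.

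The first point to establish is that $\mathcal{T}$ is of class $C^2$. Since it is affine and continuous in $u$ and $\mathcal{L}^{-1}$ is bounded linear, this reduces to the twice continuous Fr\'echet differentiability of the Nemytskii operator $y\mapsto d(\cdot,y)$ from $H^{3/2}(\Omega)$ into $L^2(\Omega)$, with derivatives $h\mapsto d_y(\cdot,y)h$ and $[h_1,h_2]\mapsto d_{yy}(\cdot,y)h_1h_2$. This is the routine, somewhat computational part: it follows from $H^{3/2}(\Omega)\hookrightarrow C^0(\bar\Omega)$ together with $d(\cdot,0)\in L^2(\Omega)$, $d_y(\cdot,0)\in C^{0,\sigma}(\bar\Omega)$, $d_{yy}(\cdot,0)\in L^\infty(\Omega)$ and the local Lipschitz continuity of $d_{yy}$ from (A\ref{A4}) --- exactly the standard hypotheses guaranteeing $C^2$-smoothness of superposition operators between these spaces.

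The crucial step is to show that at any solution $(\bar y,\bar u)$ the partial derivative $\mathcal{T}_y(\bar y,\bar u)h=h-\mathcal{L}^{-1}(h-d_y(\cdot,\bar y)h,0)$ is an isomorphism of $H^{3/2}(\Omega)$. I would write it as $I-K$ with $Kh:=\mathcal{L}^{-1}(h-d_y(\cdot,\bar y)h,0)$ and note that $K$ is compact: since $\bar y\in C^0(\bar\Omega)$ we have $d_y(\cdot,\bar y)\in L^\infty(\Omega)$ by (A\ref{A4}), so $K$ factors through the Rellich-compact embedding $H^{3/2}(\Omega)\hookrightarrow L^2(\Omega)$ followed by the bounded maps $g\mapsto g-d_y(\cdot,\bar y)g$ on $L^2(\Omega)$ and $\mathcal{L}^{-1}(\cdot,0)\colon L^2(\Omega)\to H^{3/2}(\Omega)$. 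Hence $I-K$ is Fredholm of index zero, so it suffices to prove injectivity; but $\mathcal{T}_y(\bar y,\bar u)h=0$ is the weak form of $-\Delta h+d_y(\cdot,\bar y)h=0$, $\partial_n h=0$, and testing with $h$ while using $d_y\ge0$ and $d_y(\cdot,\bar y)\ge c_\Omega$ on $E_\Omega$ (Assumptions (A\ref{A4}), (A\ref{A6})) forces $h\equiv0$. This injectivity, where the sign and coercivity conditions on $d_y$ enter, is the real content; the $C^2$-regularity of $\mathcal{T}$ is comparatively mechanical.

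With both ingredients in hand the implicit function theorem provides a locally unique $C^2$ map $u\mapsto y(u)$ near each $\bar u$, which by the global uniqueness in Lemma~\ref{genPDEsol} coincides with $G$; hence $G\in C^2(L^2(\Gamma),H^{3/2}(\Omega))$. Differentiating $\mathcal{T}(G(u),u)=0$ once in direction $v$ and setting $y_v:=G'(u)v$ gives $y_v=\mathcal{L}^{-1}(y_v-d_y(\cdot,y)y_v,v)$, the weak form of the claimed linearized equation; differentiating twice, using $\mathcal{T}_{uu}=\mathcal{T}_{yu}=0$ and the second derivative of the superposition operator, and setting $y_{v_1,v_2}:=G''(u)[v_1,v_2]$ gives $y_{v_1,v_2}=\mathcal{L}^{-1}(y_{v_1,v_2}-d_y(\cdot,y)y_{v_1,v_2}-d_{yy}(\cdot,y)y_{v_1}y_{v_2},0)$, whose right-hand side lies in $L^2(\Omega)$ since $d_{yy}(\cdot,y)\in L^\infty(\Omega)$ and $y_{v_i}\in H^{3/2}(\Omega)\hookrightarrow C^0(\bar\Omega)$; this is exactly the weak form of the stated second-order equation. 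Unique solvability of both linear problems --- hence their identification with ``the solution'' in the statement --- follows once more from Lemma~\ref{pro:reglinearPDE}~(\ref{linReg1}) applied with $\alpha=d_y(\cdot,y)$.
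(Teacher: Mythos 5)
Your proposal is correct and takes essentially the same route as the paper, which proves Theorem~\ref{diffG} by appealing to the implicit function theorem (citing Theorem~3.1 of \cite{CasasTroeltzsch:2012}): your fixed-point reformulation $\mathcal{T}(y,u)=y-\mathcal{L}^{-1}(y-d(\cdot,y),u)$, the $C^2$ smoothness of the superposition operator under (A\ref{A4}), and the isomorphism property of $\mathcal{T}_y$ via the Fredholm-plus-injectivity argument using (A\ref{A4}) and (A\ref{A6}) is precisely that argument carried out in detail. One cosmetic point: the embedding $H^{3/2}(\Omega)\hookrightarrow C^{0,1/2}(\bar\Omega)$ is the borderline case and need not hold, but since your proof only uses $H^{3/2}(\Omega)\hookrightarrow C^{0}(\bar\Omega)$, which is valid in two dimensions, nothing is affected.
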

The proof of this theorem is based on the implicit function theorem. It can be found in \cite[Theorem 3.1]{CasasTroeltzsch:2012}. We also refer to \cite{CasasMateos:2002}, \cite{CasasMateosTroeltzsch:2005} and \cite{Troeltzsch:2010}. The next theorem is devoted to the first order optimality conditions and regularity results for locally optimal solutions of problem (P).
\begin{theorem}\label{firstorder}
Let Assumption \ref{Psetting} be fulfilled. Then problem (P) admits at least one solution in $U_{ad}$. For every (local) solution $\ub\in U_{ad}$ of problem (P) there exists a unique optimal state $\yb\in H^{3/2}(\O)$ and optimal adjoint state $\pb\in H^{3/2}(\O)$ such that
\begin{gather}\label{optstate}
\begin{aligned}
-\Delta \yb+d(x,\yb)&=0&&\quad\text{in }\Omega\\
\partial_n \yb&=\ub&&\quad\text{on }\Gamma
\end{aligned}\\\label{optadjoint}	
\begin{aligned}
-\Delta\pb +d_y(x,\yb)\pb&=\yb-y_d&&\quad \mbox{in }\O\\
\partial_{n}\pb&=0&&\quad\mbox{on }\G
\end{aligned}\\\label{varineq}
J'(\bar u)(u-\bar u)=\scalar{\pb+\nu\ub}{u-\ub}{\G}\geq 0\quad\forall u\in U_{ad}.
\end{gather}
Moreover, let $\beta_j$ satisfy~\eqref{condbeta} for $j=1,\ldots,m$. Then there holds
for $\epsilon<\min(1,\min_j(1-\beta_j))$ that $\yb,\,\pb\in \WbO{2}\cap C^{0,\epsilon}(\bar\Omega)$, $\pb|_\G\in H^1(\G)\cap C^{0,\epsilon}(\G)$ and $\ub\in H^1(\G)\cap C^{0,\epsilon}(\G)$.
\end{theorem}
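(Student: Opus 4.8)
The proof naturally breaks into three parts: existence of a minimizer, the adjoint system together with the variational inequality, and the regularity of $\ub,\yb,\pb$. For existence the plan is the direct method. The set $U_{ad}$ is nonempty, convex, closed and, because $u_a\le u\le u_b$ with $u_a,u_b$ fixed reals, bounded in $L^2(\G)$, hence weakly sequentially compact. From a minimizing sequence $\{u_n\}\subset U_{ad}$ I would extract $u_n\rightharpoonup\ub\in U_{ad}$; by Lemma~\ref{genPDEsol}~(\ref{Reg1}) the states $y_n=G(u_n)$ stay bounded in $H^{3/2}(\O)$, so a subsequence converges strongly in $L^2(\O)$ through the compact embedding $H^{3/2}(\O)\hookrightarrow L^2(\O)$, and passing to the limit in the weak formulation of the state equation identifies the limit as $G(\ub)$. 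Weak lower semicontinuity of both norms appearing in $F$ then yields $J(\ub)\le\liminf J(u_n)$, so $\ub$ is optimal.

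For the optimality system, $J=F(G(\cdot),\cdot)$ is of class $C^1$ on $L^2(\G)$ by Theorem~\ref{diffG}, and since $U_{ad}$ is convex every local solution $\ub$ satisfies $J'(\ub)(u-\ub)\ge0$ for all $u\in U_{ad}$. I would introduce $\pb$ as the unique $H^{3/2}(\O)$-solution of \eqref{optadjoint}; its existence follows from Lemma~\ref{pro:reglinearPDE}~(\ref{linReg1}) applied with $\a=d_y(\cdot,\yb)$, whose hypotheses are verified from $\yb\in C^0(\bar\O)$, the sign and smoothness conditions in (A\ref{A4}) and the coercivity on $E_\O$ from (A\ref{A6}). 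Testing the adjoint equation with $y_v=G'(\ub)v$ and the linearized state equation with $\pb$, an integration by parts gives $\scalar{\yb-y_d}{y_v}{\O}=\scalar{\pb}{v}{\G}$, whence $J'(\ub)v=\scalar{\pb+\nu\ub}{v}{\G}$ and \eqref{varineq}; uniqueness of $\yb$ and $\pb$ for a given $\ub$ is immediate from Lemmas~\ref{genPDEsol} and \ref{pro:reglinearPDE}.

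The regularity is obtained by bootstrapping through the weighted spaces, with $\vec\beta$ fixed according to \eqref{condbeta}. Since $\ub\in L^\infty(\G)\subset L^2(\G)$, Lemma~\ref{genPDEsol}~(\ref{Reg1}) gives $\yb\in H^{3/2}(\O)\cap C^0(\bar\O)$ and Lemma~\ref{pro:reglinearPDE}~(\ref{linReg1}) gives $\pb\in H^{3/2}(\O)\cap C^0(\bar\O)$, hence $\pb|_\G\in H^1(\G)$. The variational inequality \eqref{varineq} is equivalent to the pointwise relation $\ub=\Pi_{[u_a,u_b]}(-\nu^{-1}\pb|_\G)$ a.e.\ on $\G$, and since truncation to an interval is Lipschitz and preserves $H^1$, this already gives $\ub\in H^1(\G)$. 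Because $\beta_j\ge0$ makes the weights $r_j^{\beta_j}$ bounded, one has $H^1(\G)\hookrightarrow W^{1/2,2}_{\vec\beta}(\G)$ and $L^\infty(\O)\hookrightarrow W^{0,2}_{\vec\beta}(\O)$. Thus Lemma~\ref{genPDEsol}~(\ref{Reg2}) applied to the state equation with $f=0$, $g=\ub$ yields $\yb\in W^{2,2}_{\vec\beta}(\O)$, and the Sobolev-type embedding $W^{2,2}_{\vec\beta}(\O)\hookrightarrow C^{0,\epsilon}(\bar\O)$ valid for $\epsilon<\min(1,\min_j(1-\beta_j))$ gives $\yb\in C^{0,\epsilon}(\bar\O)$. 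Then $\yb-y_d\in C^{0,\epsilon'}(\bar\O)$ with $\epsilon'=\min(\epsilon,\sigma)$ by (A\ref{A2}), in particular $\yb-y_d\in W^{0,2}_{\vec\beta}(\O)$, so Lemma~\ref{pro:reglinearPDE}~(\ref{linReg2}) gives $\pb\in W^{2,2}_{\vec\beta}(\O)$ and hence $\pb\in C^{0,\epsilon}(\bar\O)$; recalling $\pb\in H^{3/2}(\O)$, restriction to $\G$ gives $\pb|_\G\in H^1(\G)\cap C^{0,\epsilon}(\G)$, and reinserting this into the projection formula gives $\ub\in H^1(\G)\cap C^{0,\epsilon}(\G)$.

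I expect the regularity part to be the delicate one: one has to keep careful track of which space (weighted or unweighted, on $\O$ or on $\G$) each right-hand side and trace lives in so that Lemma~\ref{pro:reglinearPDE} and Lemma~\ref{genPDEsol} apply at every step, and one must confirm that the adjoint coefficient $\a=d_y(\cdot,\yb)$ satisfies the uniform bounds required there, which ultimately reduces to $\yb\in L^\infty(\O)$ together with (A\ref{A4})-(A\ref{A6}). Existence and the first-order system are, by comparison, routine once the operators $G$, $G'$ and the adjoint state are in hand.
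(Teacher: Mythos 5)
Your overall architecture (direct method for existence, the standard first-order system via Theorem~\ref{diffG} with the adjoint state introduced through Lemma~\ref{pro:reglinearPDE} with $\a=d_y(\cdot,\yb)$, then a bootstrap through the weighted spaces and the projection formula) matches the paper's proof. The one place where you genuinely deviate is also the one delicate step, and there your argument has a gap: you assert that $\pb\in H^{3/2}(\O)$ ``hence $\pb|_\G\in H^1(\G)$''. This is exactly the endpoint of the trace theorem: on Lipschitz domains the trace operator maps $H^s(\O)$ boundedly into $H^{s-1/2}(\G)$ only for $1/2<s<3/2$, and the endpoint $s=3/2$ is known to fail for general Lipschitz domains, so it cannot simply be cited. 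The paper circumvents this point entirely: it first upgrades $\pb$ to $\WbO{2}$ via Lemma~\ref{pro:reglinearPDE}~(\ref{linReg2}) (only $\yb-y_d\in L^2(\O)\hookrightarrow W^{0,2}_{\vec\beta}(\O)$ is needed, so this is available before anything is known about $\ub$), then uses the embedding $\WbO{2}\hookrightarrow W^{2,q}(\O)$ with $q=4/3$ (possible since $\lambda_j>1/2$ allows $1/2>\beta_j>1-\lambda_j$), and finally a trace theorem for $W^{2,4/3}(\O)$ (Ne\v{c}as, Theorem 4.11) to conclude $\pb|_\G\in H^1(\G)$. Only then does the projection formula give $\ub\in H^1(\G)\hookrightarrow W^{1/2,2}_{\vec\beta}(\G)$, which is also the input your application of Lemma~\ref{genPDEsol}~(\ref{Reg2}) to the state requires; so in your writeup the entire bootstrap hangs on the unproved endpoint trace claim.

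For a polygonal domain your claim can in fact be rescued (extend $\pb$ to an $H^{3/2}(\mathbb{R}^2)$ function by a Stein-type extension and apply the flat trace estimate edge by edge, which controls the tangential derivative on each $\G_j$), but this needs to be stated and argued; as written it is an appeal to a trace theorem precisely at its known point of failure, and it is exactly the point the authors chose to handle by the weighted-space detour. Two smaller remarks: the H\"older embedding $\WbO{2}\hookrightarrow C^{0,\epsilon}(\bar\O)$ for $\epsilon<\min(1,\min_j(1-\beta_j))$ that you invoke is itself obtained in the paper from the embedding into $W^{2,q}(\O)$ plus the Sobolev inequality, so it should be justified rather than asserted; and your detour through $\yb\in C^{0,\epsilon}(\bar\O)$ before applying Lemma~\ref{pro:reglinearPDE}~(\ref{linReg2}) to the adjoint equation is unnecessary, since $\yb-y_d\in L^2(\O)\hookrightarrow W^{0,2}_{\vec\beta}(\O)$ already suffices. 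The existence and first-order parts are routine and agree with the standard arguments the paper cites.
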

\begin{proof}
Due to the structure the reduced cost functional of problem \eqref{costf}-\eqref{cconstr} is of class $C^2$ from $L^2(\G)$ to $\mathbb{R}$, cf. \cite[Theorem 3.2 and Remark 3.3]{CasasTroeltzsch:2012}. The convexity of the cost functional with respect to the control $u$ implies the existence of at least one solution of problem (P) in $U_{ad}$ under Assumption \ref{Psetting}, which can be shown by standard arguments, see e.g. \cite[Section 4.4.2]{Troeltzsch:2010}. The first order optimality conditions \eqref{optstate}-\eqref{varineq} are based on Theorem \ref{diffG} and can also be derived by standard arguments, see e.g. \cite[Section 4.6]{Troeltzsch:2010}.
It remains to prove the regularity assertion. By means of Lemma~\ref{genPDEsol}~(\ref{Reg1}) we get $\bar y\in H^{3/2}(\O)$ for every (local) solution $\bar u \in U_{ad}\subset L^2(\G)$. Since $\bar y$ belongs to $H^{3/2}(\O)\hookrightarrow L^\infty(\O)$ and $y_d$ to $C^{0,\sigma}(\bar \O)\hookrightarrow L^2(\O)$ we can conclude with Assumption~\ref{Psetting} and Lemma~\ref{pro:reglinearPDE}~(\ref{linReg1}) that the adjoint state $\pb$ is an element of $H^{3/2}(\O)$. Using Lemma~\ref{pro:reglinearPDE}~(\ref{linReg2}) we even obtain that $\pb$ belongs to $\WbO{2}$ if $\vec \beta$ satisfies~\eqref{condbeta}. According to \cite[Lemma 2.1]{ApelPfeffererRoesch:2012} the embedding
\begin{equation}\label{eq:W2q}
W^{2,2}_{\vec\beta}(\O)\hookrightarrow W^{2,q}(\O)
\end{equation}
is valid for $q<\min(2,\min_j(2/(\beta_j+1)))$. Since $\lambda_j=\pi/\omega_j>1/2$, we can conclude, that there is a $\beta_j$ such that $1/2>\beta_j>1-\lambda_j$, which allows the choice $q=4/3$ in~\eqref{eq:W2q}. Thus, Theorem 4.11 of \cite{Necas:2012} implies $\bar p\in H^1(\Gamma)$.
Moreover, it is well known that the variational inequality \eqref{varineq} is equivalent to the projection formula
\begin{equation}\label{projform}
  \ub=\Pi_{[u_a,u_b]}\left(-\frac{1}{\nu}\pb\right)\text{ for a.a. } x\in\G.
\end{equation}
with $\Pi_{[u_a,u_b]}f(x):=\max(u_a,\min(u_b,f(x)))$. Hence, the local optimal control $\ub$ belongs to $H^1(\G)$, cf. \cite[Theorem A.1]{Kinderlehrer:1980}. Furthermore, there are the embeddings $H^1(\Gamma)\hookrightarrow H^{1/2}(\G)\hookrightarrow W^{1/2,2}_{\vec \beta}(\G)$ for $\beta_j\geq 0$. Thus, we can conclude $y\in\WbO{2}$ for $\vec \beta$ satisfying~\eqref{condbeta} by means of Lemma \ref{genPDEsol}~(\ref{Reg2}). Finally, the embedding \eqref{eq:W2q} and the Sobolev inequality imply $\bar y, \bar p\in C^{0,\epsilon}(\bar \O)$ and $\bar u\in C^{0,\epsilon}(\Gamma)$ if ${\epsilon<\min(1,\min_j(1-\beta_j))}$.
\end{proof}
Actually, the proof of Theorem~\ref{firstorder} requires only $y_d\in L^2(\Omega)$. Due to the additional assumption $y_d\in C^{0,\sigma}(\bar\O)$ the regularity of the adjoint state $\pb$ can be increased. This fact is essential for improved finite element error estimates on the boundary and for the main result of this paper.
\begin{theorem}\label{regadjoint}
Let Assumption~\ref{Psetting} be satisfied. Furthermore, let $\beta_j$ and $\gamma_j$ satisfy the conditions
\begin{align}
	1/2>\beta_j>\max(0,3/4-\lambda_j/2)\quad &\text{or}\quad \beta_j=0\text{ and }3/4-\lambda_j/2< 0,\notag\\
  2>\gamma_j>\max(0,2-\lambda_j)\quad &\text{or}\quad \gamma_j=0\text{ and }2-\lambda_j< 0\label{eq:gammab}
\end{align}
with $\lambda_j=\pi/\omega_j$ for $j=1,\ldots,m$. Then the adjoint state $\pb$ satisfying the adjoint equation \eqref{optadjoint} belongs to $W^{2,\infty}_{\vec \gamma}(\O)$ and its restriction to the boundary $\pb_{|\Gamma}$ to $W^{2,2}_{2\vec\beta}(\Gamma)\hookrightarrow W^{1,\infty}_{\vec\beta}(\Gamma)$.
\end{theorem}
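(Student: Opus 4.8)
The plan is to bootstrap the adjoint equation~\eqref{optadjoint}, exploiting the hypothesis $y_d\in C^{0,\sigma}(\bar\O)$, the only input not already used in Theorem~\ref{firstorder}. Since the $\beta_j$ admissible here also satisfy~\eqref{condbeta}, Theorem~\ref{firstorder} gives $\yb,\pb\in W^{2,2}_{\vec\beta}(\O)\cap C^{0,\epsilon}(\bar\O)$ for some $\epsilon>0$, and together with $y_d\in C^{0,\sigma}(\bar\O)$ this yields $\yb-y_d\in C^{0,\sigma'}(\bar\O)$ with $\sigma'=\min(\sigma,\epsilon)$. Using Assumption~(A\ref{A4}) --- Hölder continuity of $d_y(\cdot,0)$, boundedness and Lipschitz continuity of $d_{yy}$ on bounded sets --- together with $\yb\in C^{0,\epsilon}(\bar\O)\cap L^\infty(\O)$, one checks that the coefficient $x\mapsto d_y(x,\yb(x))$ is Hölder continuous, so that $\hat f:=\yb-y_d+\bigl(1-d_y(\cdot,\yb)\bigr)\pb\in C^{0,\sigma'}(\bar\O)$ and $\pb$ solves the simple Neumann problem $-\Delta\pb+\pb=\hat f$ in $\O$, $\partial_n\pb=0$ on $\G$, with (locally) Hölder data.

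For the first assertion I would then apply the weighted $W^{2,\infty}_{\vec\gamma}(\O)$ a priori estimate for the homogeneous Neumann problem on the polygon --- the $L^\infty$ analogue of Lemma~\ref{pro:reglinearPDE}~(\ref{linReg2}), available from the theory in~\cite{MazyaRossmann:2010} and the estimates used in~\cite{ApelPfeffererRoesch:2012} --- to $-\Delta\pb+\pb=\hat f$. The admissible range $2>\gamma_j>\max(0,2-\lambda_j)$ reflects that the leading corner singularity of the Neumann Laplacian at $x^{(j)}$ is $r_j^{\lambda_j}\cos(\lambda_j\varphi_j)$ with $\lambda_j=\pi/\omega_j$ (the smallest positive exponent in the Neumann corner expansion), whose second derivatives behave like $r_j^{\lambda_j-2}$, so that $r_j^{\gamma_j}D^2\pb$ is bounded precisely when $\gamma_j\ge2-\lambda_j$. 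This gives $\pb\in W^{2,\infty}_{\vec\gamma}(\O)$.

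For the boundary trace I would use, near each corner, the standard singular decomposition $\pb=c_j\,\eta_j(r_j)\,r_j^{\lambda_j}\cos(\lambda_j\varphi_j)+\pb_{\mathrm{reg}}$ with a cut-off $\eta_j$; since no Neumann exponent lies strictly between $\lambda_j$ and $2\lambda_j$, the shift across $\lambda_j$ places $\pb_{\mathrm{reg}}$ in a weighted space with a strictly smaller weight. Restricted to $\G_j^\pm$ the singular term equals $\pm c_j\eta_j(r_j)r_j^{\lambda_j}$, with second tangential derivative $O(r_j^{\lambda_j-2})$, so $r_j^{2\beta_j}\partial_t^2(\cdot)\in L^2(\G_j^\pm)$ iff $4\beta_j+2\lambda_j-4>-1$, i.e.\ iff $\beta_j>\tfrac34-\tfrac{\lambda_j}{2}$ --- exactly condition~\eqref{eq:gammab}; the contribution of $\pb_{\mathrm{reg}}$ to the trace has a second tangential derivative that is bounded away from the corners (using $\pb|_\G\in H^1(\G)$ from Theorem~\ref{firstorder}) and lies in the required weighted $L^2$ space near them. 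Hence $\pb|_\G\in W^{2,2}_{2\vec\beta}(\G)$; equivalently, this boundary regularity can be extracted from a weighted trace regularity result for the Neumann problem, cf.~\cite{MazyaRossmann:2010}. Finally, the embedding $W^{2,2}_{2\vec\beta}(\G)\hookrightarrow W^{1,\infty}_{\vec\beta}(\G)$ follows from a one-dimensional weighted Sobolev estimate, valid because $\beta_j<1/2$ forces $r_j^{\beta_j}\int_{r_j}^{R_j}|\partial_t^2 v|\,ds\to0$ as $r_j\to0$ along each edge meeting $x^{(j)}$.

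I expect the main obstacle to be the sharp bookkeeping of the corner singularities: identifying which singular exponents of the homogeneous Neumann Laplacian occur, carrying out the shift across $\lambda_j$, and matching the decay of the (tangential) derivatives against the weights --- this is what forces the precise ranges in~\eqref{eq:gammab} and the role of the leading exponent $\lambda_j=\pi/\omega_j$. A secondary, more technical point is verifying that $d_y(\cdot,\yb)\pb$ is Hölder continuous so that the weighted $W^{2,\infty}_{\vec\gamma}$ estimate applies; this is where Assumption~(A\ref{A4}) and the Hölder regularity of $\yb,\pb$ from Theorem~\ref{firstorder} enter.
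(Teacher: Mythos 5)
Your proposal is correct and takes essentially the same route as the paper: the key step in both is to rewrite the adjoint equation as $-\Delta\pb+\pb=\yb-y_d+(1-d_y(\cdot,\yb))\pb$ with a Hölder-continuous right-hand side (via Theorem~\ref{firstorder} and Assumption~(A\ref{A4})) and then invoke weighted $W^{2,\infty}_{\vec\gamma}$ regularity for the linear Neumann problem on the polygon. The only difference is cosmetic: for the boundary statement the paper simply cites \cite[Lemma 2.6 and Corollary 4.2]{ApelPfeffererRoesch:2012}, which already give $\pb|_\G\in W^{2,\infty}_{\vec\gamma}(\G)$ and the embedding into $W^{2,2}_{2\vec\beta}(\G)\hookrightarrow W^{1,\infty}_{\vec\beta}(\G)$, whereas you re-derive the same weight arithmetic (the condition $\beta_j>3/4-\lambda_j/2$) by an explicit corner singular expansion.
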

\begin{proof}
According to Theorem~\ref{firstorder} there is a $\epsilon>0$ such that $\bar y$ and $\bar p$ belong to $C^{0,\epsilon}(\bar \O)$ and hence $\bar y-y_d+(1-d_y(\cdot,\bar y))\bar p$ either having regard to Assumption~(A\ref{A4}). Therefore, Lemma 2.6 of \cite{ApelPfeffererRoesch:2012} implies $\pb\in W^{2\infty}_{\vec \gamma}(\O)$ and $\pb_{|\G}\in W^{2\infty}_{\vec \gamma}(\G)$ if $\vec \gamma$ satisfies~\eqref{eq:gammab}. The stated regularity on the boundary is then a consequence of the Sobolev inequality and embeddings in weighted Sobolev spaces, cf. \cite[Corollary 4.2]{ApelPfeffererRoesch:2012}.
\end{proof}

For the statement of second order sufficient optimality conditions we will count on so called strongly active sets. We start with the definition of the $\tau$-critical cone associated to a control $\bar u$:
\begin{equation}\label{criticalcone}
	C_\tau(\ub):=\{v\in L^2(\G):\,v\text{ satisfies }\eqref{strongactive}\},
\end{equation}
where
\begin{equation}\label{strongactive}
	v(x)\left\{
	\begin{aligned}
	\ge 0,&\quad\text{if }\ub(x)=u_a\\
	\le 0,&\quad\text{if }\ub(x)=u_b\\
	=0, &\quad\text{if }|\pb(x)+\nu\ub(x)|>\tau.
	\end{aligned}
	\right.
\end{equation}
Furthermore, straightforward computations using Theorem~\ref{diffG} yield the following well known formulation of the second derivative of the reduced cost functional $J(u)$:
\begin{equation*}
  J''(u)[v_1,v_2]=\into{y_{v_1}y_{v_2}-p(y(u))d_{yy}(x,y(u))y_{v_1}y_{v_2}}+\intg{\nu v_1v_2}
\end{equation*}
with $p(y(u))$ being the solution of~\eqref{optadjoint} with $\bar y$ replaced by $y(u)$. Now, we are in the position to formulate second order sufficient optimality conditions.
\begin{theorem}\label{suffcond}
Let Assumption~\ref{Psetting} be satisfied. Moreover, let $\ub\in U_{ad}$ be a control satisfying the first order optimality conditions given in Theorem \ref{firstorder}.   Further, it is assumed that there are two constants $\tau>0$ and $\delta>0$ such that
\begin{equation}\label{SSC}
  J''(\ub)[v,v]\ge\delta\|v\|_{L^2(\G)}^2 
\end{equation}
for all $v\in C_\tau(\ub)$. Then, there exist $\beta>0$ and $\varrho>0$ such that
\[
  J(u)\ge J(\ub)+\beta\|u-\ub\|_{L^2(\G)}^2
\]
is satisfied for every $u\in U_{ad}$ with $\lnorm{u-\ub}{\G}\le\varrho$.
\end{theorem}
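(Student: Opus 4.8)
The plan is to derive the quadratic growth condition from the coercivity assumption \eqref{SSC} by a contradiction argument, which is the standard route for second order sufficient conditions in the presence of control constraints. First I would suppose the conclusion fails: there is a sequence $\{u_k\}\subset U_{ad}$ with $\|u_k-\ub\|_{L^2(\G)}\to0$ and
\[
  J(u_k)<J(\ub)+\frac1k\|u_k-\ub\|_{L^2(\G)}^2.
\]
Write $\rho_k=\|u_k-\ub\|_{L^2(\G)}$ and $v_k=(u_k-\ub)/\rho_k$, so $\|v_k\|_{L^2(\G)}=1$; after passing to a subsequence, $v_k\rightharpoonup v$ weakly in $L^2(\G)$. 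The three ingredients I will assemble are: (a) a second order Taylor expansion of $J$ around $\ub$ along the segment to $u_k$, using that $J\in C^2$ (which follows from Theorem~\ref{diffG} and the structure of $F$), so that
\[
  J(u_k)=J(\ub)+J'(\ub)(u_k-\ub)+\tfrac12 J''(w_k)[u_k-\ub,u_k-\ub]
\]
for some intermediate control $w_k$ on that segment; (b) the sign condition $J'(\ub)(u_k-\ub)\ge0$ from the variational inequality \eqref{varineq}; and (c) a continuity/convergence statement $J''(w_k)[v_k,v_k]\to J''(\ub)[v,v]$ combined with weak lower semicontinuity, exploiting the explicit formula for $J''$ given just before the theorem — the compact part $\int_\Omega y_{v_k}^2 - p\,d_{yy}y_{v_k}^2$ passes to the limit strongly because $G'$ maps $L^2(\G)$ compactly into $L^2(\Omega)$, while the term $\nu\|v_k\|_{L^2(\G)}^2$ is only weakly lower semicontinuous.

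Combining (a)–(c), dividing by $\rho_k^2$ and letting $k\to\infty$ gives
\[
  0\ge \limsup_{k\to\infty}\tfrac12 J''(w_k)[v_k,v_k]\ge \tfrac12 J''(\ub)[v,v] + \tfrac{\nu}{2}\bigl(1-\|v\|_{L^2(\G)}^2\bigr)\liminf\ldots
\]
so that in particular $J''(\ub)[v,v]\le 0$. The next step is to check that the weak limit $v$ lies in the critical cone $C_\tau(\ub)$: the sign conditions on the active sets $\{\ub=u_a\}$ and $\{\ub=u_b\}$ are preserved under weak convergence since $v_k$ already satisfies them (as $u_k\in U_{ad}$), and on the strongly active set $\{|\pb+\nu\ub|>\tau\}$ one argues, using the projection formula \eqref{projform} and $J'(\ub)(u_k-\ub)\to0$ faster than $\rho_k$, that $v$ must vanish there — this is the classical "the multiplier forces $v=0$ on the strongly active set" computation. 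Hence $v\in C_\tau(\ub)$, and \eqref{SSC} yields $J''(\ub)[v,v]\ge\delta\|v\|_{L^2(\G)}^2\ge0$, forcing $J''(\ub)[v,v]=0$ and $\|v\|_{L^2(\G)}=0$, i.e. $v=0$. But then $\|v_k\|_{L^2(\G)}^2\to 0$ would follow from the strong convergence of the compact part and $\nu\|v_k\|^2 = \nu$, contradicting $\|v_k\|_{L^2(\G)}=1$; more precisely, from $0\ge\liminf J''(w_k)[v_k,v_k]$ and $v_k\to 0$ strongly in $L^2(\Omega)$ for the state pieces, one gets $0\ge\liminf \nu\|v_k\|_{L^2(\G)}^2 = \nu>0$, the desired contradiction. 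Reading the argument backwards then produces the asserted $\beta>0$ and $\varrho>0$.

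The main obstacle I anticipate is step (c) together with the cone membership, i.e. controlling $J''$ along the perturbed sequence and handling the strongly active set. For the first part one needs uniform boundedness of the states $y(w_k)$ and adjoint states $p(y(w_k))$ and their Lipschitz dependence on the control, which is where Assumption~(A\ref{A4}) (the Lipschitz bounds on $d_y$, $d_{yy}$) and the compactness of $G':L^2(\G)\to L^2(\Omega)$ (via $G'(u)v\in H^{3/2}(\Omega)$, cf. Theorem~\ref{diffG} and Lemma~\ref{genPDEsol}) enter decisively. For the second part, the delicate point is that weak $L^2(\G)$-convergence does not obviously control pointwise behaviour on the strongly active set; the standard fix is to estimate $\int_\G(\pb+\nu\ub)v_k\,ds$ from below by $\tau\int_{\{|\pb+\nu\ub|>\tau\}}|v_k|\,ds$ using the sign structure of $v_k$ inherited from $U_{ad}$, and then use $\int_\G(\pb+\nu\ub)v_k\,ds = \rho_k^{-1}J'(\ub)(u_k-\ub)\le \rho_k^{-1}\cdot o(\rho_k^2)\cdot$\ldots $\to0$ to force $v_k\to0$ in $L^1$ on that set, hence $v=0$ there. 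This is routine in the unconstrained-in-direction parts but requires care to make rigorous; everything else (the Taylor expansion, weak lower semicontinuity, the final contradiction) is standard. I note this is exactly the line of argument in \cite{CasasMateosTroeltzsch:2005} and \cite{AradaCasasTroeltzsch:2002}, adapted to the Neumann boundary setting.
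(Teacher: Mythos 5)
Your argument is correct and is precisely the classical contradiction proof (Taylor expansion at $\ub$, normalized sequence $v_k=(u_k-\ub)/\rho_k$, weak limit in the $\tau$-critical cone via the sign structure and the bound $J'(\ub)(u_k-\ub)=O(\rho_k^2)$, compactness of the state part of $J''$ plus the $\nu$-term to reach $0\ge\nu$) found in the references the paper itself cites in lieu of a proof, so you are matching the intended argument rather than deviating from it. Your implicit use of $J$ being of class $C^2$ on $L^2(\G)$ (via Theorem~\ref{diffG}) to justify the Taylor expansion and the estimates along the intermediate controls $w_k$ is exactly the reason the paper remarks that no two-norm discrepancy has to be handled here, so that point of care in your sketch is well placed.
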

\begin{proof}
For details regarding the proof of the theorem we refer to e.g. \cite[Corollary 3.6]{CasasTroeltzsch:2012}, see also \cite{BonnansZidani:1999}, \cite{CasasMateos:2002}, \cite[Chapter 4.10]{Troeltzsch:2010}, and the references therein. Note, that we do not have to deal with the two-norm discrepancy due to the special structure of the optimal control problem, cf. the general setting in \cite[Section 3]{CasasTroeltzsch:2012}.
\end{proof}

\section{Discretization and fully discrete approximation of (P)}\label{sec:discretization}
Here, we define a finite element based approximation of the optimal control problem (P). To this end, we introduce a family of graded triangulations $\mathcal T_h$ of $\O$ in the sense of Ciarlet \cite{Ciarlet:1991}, where $h$ denotes the global mesh parameter, which is assumed to be less than $1$. Note, that there is a segmentation $\mathcal{E}_h$ of the boundary $\Gamma$ induced by the triangulation $\mathcal{T}_h$. The vector $\vec\mu\in\mathbb{R}^m$ summarizes the grading parameters $\mu_j\in(0,1]$, $j=1,\ldots,m$, regarding the corner points $x^{(j)}$. The distances of the triangle $T\in\mathcal{T}_h$ and edge $E\in\mathcal{E}_h$ to the corner $x^{(j)}$ are defined by $r_{T,j}:=\inf_{x\in T}|x-x^{(j)}|$ and $r_{E,j}:=\inf_{x\in E}|x-x^{(j)}|$, respectively.
We assume that the mesh size $h_T$ of a triangle $T\in\mathcal{T}_h$ satisfies
\begin{equation}\label{mesh}
\begin{aligned}
c_1h^{1/\mu_j}&\le h_T\le c_2 h^{1/\mu_j}&&\quad\text{for }r_{T,j}=0,\\
c_1hr_{T,j}^{1-\mu_j}&\le h_T\le c_2hr_{T,j}^{1-\mu_j}&&\quad\text{for }0<r_{T,j}\le R_j,\\
c_1h&\le h_T\le c_2h&&\quad\text{for }r_{T,j}> R_j
\end{aligned}
\end{equation}
for $j=1,\ldots,m$ with the radii $R_j$ that has been introduced in the beginning of Section 2. As a consequence there holds for the mesh size $h_E$ of an element $E\in \mathcal{E}_h$ being an edge of the triangle $T\in \mathcal{T}_h$
\[
  h_E\sim h_T\quad \forall E\subset \bar T.
\]
Furthermore, we introduce for $j=1,\ldots,m$ the sub-triangulations $\mathcal{E}_{h,j}$ of $\mathcal{E}_h$ satisfying $\bigcup_{E\in \mathcal{E}_{h,j}}\bar E\subset \Gamma^\pm_{j}$ and $E\cap\Gamma^\pm_j\neq E$ for all $E\notin\mathcal{E}_{h,j}$. We define $\mathcal{E}_{h,0}=\mathcal{E}_h\backslash\bigcup_{j=1}^m\mathcal{E}_{h,j}$.
Associated with this triangulation we set
\begin{align*}
V_h&:=\left\{y_h\in C^0(\bar\O):\,y_h|_T\in\mathcal P_1(T)\,\,\forall T\in\mathcal T_h\right\}\\
U_h&:=\left\{u_h\in L^\infty(\Gamma):\,u_h|_E\in\mathcal P_0(E)\,\,\forall E\in\mathcal E_h\right\}\\
U_{ad,h}&:=U_h\cap U_{ad}, 
\end{align*}
where $\mathcal P_1(T)$ and $\mathcal{P}_0(E)$ denote the spaces of all polynomials of degree less than or equal $1$ on $T$ or $0$ on $E$, respectively. Next, we introduce the discrete counterpart to the control-to-state operator $G$ in \eqref{G}. For each $u\in L^2(\G)$, we denote by $y_h(u)=G_h(u)$ the unique element of $V_h$ that satisfies
\begin{equation}\label{vardisPDE}
a(y_h(u),v_h)+\into{d(x,y_h(u))v_h}=\intg{uv_h}\quad\quad\forall v_h\in V_h
\end{equation}
with the bilinear form
\[
  a:H^1(\Omega)\times H^1(\Omega)\rightarrow \mathbb{R},\,\, a(y,v)=\into{\nabla y\cdot\nabla v}.
\]
The existence and uniqueness of a solution of \eqref{vardisPDE} can be deduced in a standard way using the monotonicity of $d$. Then the fully discretized version (P$_h$) of the optimal control problem (P) reads as follows
\begin{gather*}
\min_{u_h\in U_{ad,h}}  J_h(u_h):=\displaystyle\frac{1}{2}\|G_h(u_h)-y_d\|^2_{L^2(\O)}+\dst\frac{\nu}{2}\|u_h\|^2_{L^2(\G)}.
\end{gather*}
Since the cost functional $J_h$ is continuous and the admissible set compact, the existence of at least one solution of problem (P$_h$) is given. The first order optimality conditions can be written by
\begin{theorem}
Let Assumption~\ref{Psetting} be satisfied. Furthermore, let $\uhb\in U_{ad,h}$ be a local optimal solution of (P$_h$). Then there exist a discrete optimal state $\yhb\in V_h$ and a discrete optimal adjoint state $\phb\in V_h$ such that
\begin{gather}
a(\yhb,v_h)+\into{d(x,\yhb)v_h}=\intg{\uhb v_h}\quad\forall v_h\in V_h,\\\label{disadjoint}
a(\phb,v_h)+\into{d_y(x,\yhb)\phb v_h}=\into{(\yhb-y_d) v_h}\quad\forall v_h\in V_h,\\\label{varineqdis}
J_h'(\bar u_h)(u_h-\bar u_h)=\scalar{\phb+\nu\uhb}{u_h-\uhb}{\G}\ge0\quad\quad\forall u_h\in U_{ad,h}.
\end{gather}
\end{theorem}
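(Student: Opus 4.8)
The plan is to transcribe the derivation of the continuous first-order conditions (Theorem~\ref{firstorder}) into the finite-dimensional spaces $V_h$ and $U_h$. First I would set $\yhb:=G_h(\uhb)$, so that the discrete state equation holds by the very definition of $G_h$ in~\eqref{vardisPDE}. The next step is to establish that the discrete control-to-state operator $G_h:L^2(\G)\to V_h$ is of class $C^2$ and to identify its derivatives. Since $V_h$ is finite dimensional and $d(x,\cdot)$ is twice continuously differentiable with $d_y\geq 0$ by~(A\ref{A4}), the map $y_h\mapsto a(y_h,\cdot)+\into{d(x,y_h)\,\cdot}$ has, for each fixed argument, an invertible derivative on $V_h$: the associated matrix is positive definite because $a$ is coercive on $V_h$ (after adding the nonnegative zero-order term) — the sign condition on $d_y$ is what makes this work despite the nonlinearity. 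Hence the finite-dimensional implicit function theorem yields that $G_h$ is $C^2$ and that $z_h:=G_h'(\uhb)v$ is the unique element of $V_h$ with $a(z_h,v_h)+\into{d_y(x,\yhb)z_hv_h}=\intg{vv_h}$ for all $v_h\in V_h$.

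Then I would compute, by the chain rule, $J_h'(\uhb)v=\into{(\yhb-y_d)z_h}+\intg{\nu\uhb v}$ with $z_h=G_h'(\uhb)v$. To eliminate $z_h$, introduce $\phb\in V_h$ as the unique solution of the discrete adjoint equation~\eqref{disadjoint}; its unique solvability again follows from the coercivity of $a$ on $V_h$ together with $d_y(x,\yhb)\geq0$. Testing~\eqref{disadjoint} with $v_h=z_h$ and the linearized equation with $v_h=\phb$, and using the symmetry of $a(\cdot,\cdot)$ and of the mass term, one gets $\into{(\yhb-y_d)z_h}=a(\phb,z_h)+\into{d_y(x,\yhb)\phb z_h}=\intg{\phb v}$, whence $J_h'(\uhb)v=\scalar{\phb+\nu\uhb}{v}{\G}$ for all $v\in L^2(\G)$.

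Finally, since $\uhb$ is a local minimizer of $J_h$ over the convex admissible set $U_{ad,h}\subset U_h$, the classical first-order necessary condition on a convex set gives $J_h'(\uhb)(u_h-\uhb)\geq0$ for all $u_h\in U_{ad,h}$; combined with the formula for $J_h'$ just derived, this is exactly~\eqref{varineqdis}.

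The step requiring the most care is the differentiability of $G_h$ together with the well-posedness of the discrete linearized and adjoint equations. Everything rests on the observation that, although the state equation is only semilinear, the sign condition $d_y\geq0$ from~(A\ref{A4}) renders the relevant discrete bilinear form $a(\cdot,\cdot)+\into{d_y(x,\cdot)\,\cdot\,\cdot}$ coercive on $V_h$, so the corresponding stiffness-plus-weighted-mass matrices stay invertible throughout the implicit-function-theorem argument; this is the discrete analogue of the continuous situation and, once checked, reduces the remainder of the proof to a routine computation mirroring Theorem~\ref{firstorder}.
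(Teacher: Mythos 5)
Your overall route is the standard one (and the one the paper implicitly relies on, since it states this theorem without proof as a routine consequence of the discrete setting): define $\yhb=G_h(\uhb)$, differentiate $G_h$ via the implicit function theorem, introduce $\phb$ through \eqref{disadjoint}, use the symmetry of $a$ to get $J_h'(\uhb)v=\scalar{\phb+\nu\uhb}{v}{\G}$, and conclude \eqref{varineqdis} from local optimality over the convex set $U_{ad,h}$. That skeleton is correct.

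However, the justification you give for the step you yourself single out as the delicate one is wrong as stated. You claim the linearized/adjoint bilinear form is invertible because ``$a$ is coercive on $V_h$ (after adding the nonnegative zero-order term)'' and attribute this to the sign condition $d_y\ge 0$ from (A\ref{A4}). But $a(v,v)=\into{|\nabla v|^2}$ vanishes on constants, which belong to $V_h$, and adding a merely nonnegative weight $d_y(x,\cdot)\ge0$ does not remove this kernel: if $d_y(\cdot,\yhb)\equiv 0$ (which (A\ref{A4}) alone permits), the discrete linearized and adjoint systems are singular — this is the usual pure-Neumann degeneracy — and both the implicit-function-theorem argument for $G_h'$ (equation \eqref{Gh'}) and the unique solvability of \eqref{disadjoint} break down. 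The missing ingredient is Assumption (A\ref{A6}): $d_y(x,y)\ge c_\Omega>0$ on a set $E_\Omega$ of positive measure, uniformly in $y$, so that $d_y(x,\yhb(x))\ge c_\Omega$ on $E_\Omega$; combined with a generalized Friedrichs/Poincar\'e inequality $\|v\|^2_{H^1(\O)}\le c\bigl(\|\nabla v\|^2_{L^2(\O)}+\|v\|^2_{L^2(E_\O)}\bigr)$ this gives coercivity of $a(\cdot,\cdot)+\into{d_y(x,\yhb)\,\cdot\,\cdot}$ on $H^1(\O)$, hence on $V_h$, and the rest of your argument (the chain rule, the adjoint duality identity using the symmetry of $a$, and the first-order condition on the convex set) goes through unchanged. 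So the gap is local and repairable, but as written the key invertibility claim rests on the wrong assumption.
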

For the sake of completeness the second derivative of the cost functional of the fully discretized problem (P$_h$) can be formulated by:
\begin{equation*}
  J_h''(u_h)[v_1,v_2]=\into{y_h^{v_1}y_h^{v_2}-p_h(y_h(u_h))d_{yy}(x,y_h(u_h))y_h^{v_1}y_h^{v_2}}+\intg{\nu v_1v_2},
\end{equation*}
where $p_h(y_h(u_h))$ is the solution of the adjoint equation \eqref{disadjoint} w.r.t. $y_h(u_h)$ and $y_h^{v_i},\,i=1,2$ is the solution of the linearized discrete state equation with respect to $v_i\in L^2(\Gamma)$, i.e.,
\begin{equation}\label{Gh'}
  a(y_h^{v_i},v_h)+\into{d_y(x,y_h(u_h))y_h^{v_i} v_h}=\intg{v_i v_h}\quad\forall v_h\in V_h.
\end{equation}
For the purpose of a compact notation let us set $\vec \lambda=(\lambda_1,\ldots,\lambda_m)=(\pi/\omega_1,\ldots,\pi/\omega_m)$ and $\vec a=(a,\ldots,a)\in \mathbb{R}^m$ for any $a\in\mathbb{R}$, e.g. $\vec 1=(1,\ldots,1)\in\mathbb{R}^m$. Furthermore, all inequalities involving vectorial parameters must be understood component-by-component. The following lemma is related to finite element error estimates for linear elliptic PDEs on quasi-uniform and graded meshes that will be useful in the sequel.
\begin{lemma}\label{feerrorgenPDE}
Let $\phi$ be the solution of~\eqref{linPDE} and $\phi_h\in V_h$ be the solution of
\begin{gather*}
a(\phi_h,v_h)+\into{\alpha\phi_h v_h}=\into{fv_h}+\intg{gv_h}\quad\forall v_h\in V_h
\end{gather*}
with $\alpha$ being the function introduced in Lemma~\ref{pro:reglinearPDE}. Then the following assertions hold:
\begin{itemize}
\item[(i)] Let $\vec \mu <\vec\lambda$, $f\in W^{0,2}_{\vec 1-\vec\mu}(\Omega)$ and $g\in W^{1/2,2}_{\vec 1-\vec\mu}(\G)$. Then the error estimates
\begin{equation}\label{eq:feerrorgenPDE}
\lnorm{\phi-\phi_h}{\O}+h\|\phi-\phi_h\|_{H^1(\O)}\le ch^2(\|f\|_{W^{0,2}_{\vec 1-\vec\mu}(\Omega)}+\|g\|_{W^{1/2,2}_{\vec 1-\vec\mu}(\G)})
\end{equation}
hold independent of $\alpha$.
\item [(ii)] Let $\sigma\in(0,1]$ and $M\geq 0$ be given and let $\alpha$ additionally belong to $C^{0,\sigma}(\bar \Omega)$ with $\|\alpha\|_{C^{0,\sigma}(\bar \Omega)}\leq M$. Moreover, let $f\in C^{0,\sigma}(\bar\O)$ and $g\equiv 0$. Then the finite element error on the boundary admits independently of $\alpha$ the estimate
\begin{equation}\label{eq:feerrorgenPDEbound}
 \lnorm{\phi-\phi_h}{\G}\le ch^2|\ln h|^{3/2}\|f\|_{C^{0,\sigma}(\bar\O)} 
\end{equation}
provided that the mesh grading parameters satisfy $\vec1/4<\vec\mu<\vec1/4+\vec\lambda/2$.
\end{itemize}
\end{lemma}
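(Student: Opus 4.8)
The plan is to reduce both estimates to known finite element results for linear elliptic equations with a reaction term, treating the coefficient $\alpha$ with care so that the constants do not blow up. For part (i) the strategy is standard C\'ea/Aubin--Nitsche in weighted spaces. First I would invoke Lemma~\ref{pro:reglinearPDE}~(\ref{linReg2}) to obtain $\phi\in W^{2,2}_{\vec\beta}(\O)$ with $\vec\beta=\vec 1-\vec\mu$ (the condition $\vec\mu<\vec\lambda$ being exactly \eqref{condbeta} rewritten), together with the a priori bound, uniformly in $\alpha$. Then the $H^1$ estimate follows from C\'ea's lemma applied to the coercive (because $\alpha\ge0$) bilinear form $a(\cdot,\cdot)+(\alpha\cdot,\cdot)_{L^2(\O)}$, combined with an interpolation error estimate on graded meshes of the type $\|\phi-I_h\phi\|_{H^1(\O)}\le ch\,\|\phi\|_{W^{2,2}_{\vec 1-\vec\mu}(\O)}$, which is the classical anisotropic interpolation result on meshes satisfying \eqref{mesh}. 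For the $L^2$ estimate I would run a duality argument: let $z$ solve the adjoint problem with right-hand side $\phi-\phi_h$ and zero Neumann data; by Lemma~\ref{pro:reglinearPDE}~(\ref{linReg2}) $z\in W^{2,2}_{\vec 1-\vec\mu}(\O)$ with $\|z\|_{W^{2,2}_{\vec 1-\vec\mu}(\O)}\le c\|\phi-\phi_h\|_{L^2(\O)}$ (using the embedding $L^2(\O)\hookrightarrow W^{0,2}_{\vec 1-\vec\mu}(\O)$ since $\vec 1-\vec\mu\le\vec 1$, actually $<\vec1$), and Galerkin orthogonality plus the $H^1$ bound gives the extra power of $h$. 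The uniformity in $\alpha$ is inherited from Lemma~\ref{pro:reglinearPDE}.

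For part (ii) the estimate \eqref{eq:feerrorgenPDEbound} is the genuinely hard part, and here I would not reprove it from scratch: this is precisely the new boundary finite element error estimate of Apel, Pfefferer and R\"osch \cite{ApelPfeffererRoesch:2012}, stated there for $g\equiv0$ and H\"older continuous data on meshes with $\vec1/4<\vec\mu<\vec1/4+\vec\lambda/2$. The task is therefore to check that their hypotheses are met under the assumptions of the lemma. The point is that $\alpha\in C^{0,\sigma}(\bar\O)$ with $\|\alpha\|_{C^{0,\sigma}(\bar\O)}\le M$ and $f\in C^{0,\sigma}(\bar\O)$ imply, via Theorem~\ref{regadjoint}-type regularity (or rather Lemma~2.6 and Corollary~4.2 of \cite{ApelPfeffererRoesch:2012}), that $\phi\in W^{2,\infty}_{\vec\gamma}(\O)$ and $\phi|_\G\in W^{2,2}_{2\vec\beta}(\G)$ for appropriate weights; this is the regularity input their boundary estimate requires. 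The constant in \cite{ApelPfeffererRoesch:2012} depends on $\|f\|_{C^{0,\sigma}}$ and on $M$ but not on $\alpha$ itself, because the regularity constants depend only on $\|\alpha\|_{C^{0,\sigma}}\le M$; one should remark that the lower bound $\alpha\ge0$ (with $\alpha\ge m$ on $E_\O$) guarantees well-posedness and that the estimate does not degenerate.

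The main obstacle, and the step requiring the most care in writing, is the $\alpha$-uniformity of the constants in both parts: every auxiliary elliptic regularity result and every finite element estimate must be quoted in a form where the constant depends on $\alpha$ only through $\|\alpha\|_{L^\infty}$ (and, in part (ii), $\|\alpha\|_{C^{0,\sigma}}$) and through the lower bound structure, never through finer properties of $\alpha$. This is essential because in the application $\alpha=d_y(\cdot,\yhb)$ or $d_y(\cdot,y_h(u_h))$ varies with $h$, and these are bounded in $C^{0,\sigma}(\bar\O)$ uniformly (by (A\ref{A4}) and the uniform $L^\infty$ bounds on the discrete states) but not convergent in any strong sense. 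Concretely I would: (a) state the interpolation and C\'ea estimates with explicit constant dependence, (b) carry the duality argument keeping track that the dual problem's coefficient is the same $\alpha$, so its regularity constant is again controlled by $M$, and (c) for part (ii) simply cite \cite{ApelPfeffererRoesch:2012} after verifying the data regularity, noting the uniformity. The grading condition $\vec1/4<\vec\mu<\vec1/4+\vec\lambda/2$ is exactly the one under which the $h^2|\ln h|^{3/2}$ boundary rate holds in \cite{ApelPfeffererRoesch:2012}, so no new mesh analysis is needed.
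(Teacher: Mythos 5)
Your proposal is correct and follows essentially the same route as the paper: the paper's proof of this lemma consists precisely of citing \cite[Lemma 4.1]{ApelPfeffererRoesch:2010} for (i) and \cite[Theorem 3.2]{ApelPfeffererRoesch:2012} for (ii), with the remark that those results are stated for $\alpha\equiv1$ but extend naturally to the present $\alpha$ — which is exactly the C\'ea/duality argument in weighted spaces and the uniformity-in-$\alpha$ bookkeeping (constants depending on $\alpha$ only through $m$, $\|\alpha\|_{L^\infty}$, resp.\ $\|\alpha\|_{C^{0,\sigma}}\le M$) that you spell out, including the identification $\vec\beta=\vec1-\vec\mu$ in \eqref{condbeta}. (Only cosmetic caveat: coercivity comes from $\alpha\ge m$ on $E_\O$, not from $\alpha\ge0$ alone, as you yourself note later.)
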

\begin{proof}
For the proof of (i) and (ii) we refer to \cite[Lemma 4.1]{ApelPfeffererRoesch:2010} and \cite[Theorem 3.2]{ApelPfeffererRoesch:2012}, respectively. In both papers $\alpha\equiv 1$ is assumed, but it can be extended to the more general case in a natural way.
\end{proof}
Before we are in the position to deal with the superconvergence properties of the fully discrete optimal control problem (P$_h$), we have to ensure that every local minimum $\ub$ of (P) can be approximated by a local minimum of (P$_h$) provided that $\ub$ satisfies the second order sufficient optimality conditions. But first we need to determine the order of convergence of the solution of the discrete state equation \eqref{vardisPDE} to the solution of the continuous state equation \eqref{stateeq}. An analogous result is of course needed for the adjoint equation. Forthcoming, we will denote with $p(y)$ and $p_h(y)$ the solution of~\eqref{optadjoint} and~\eqref{disadjoint} with $\bar y$ and $\bar y_h$ replaced by $y\in L^\infty(\Omega)$, respectively. Note, that $y(u)=G(u)$ and $y_h(u)=G_h(u)$.
\begin{theorem}\label{errorestimates}
Let Assumption~\ref{Psetting} be satisfied. Then there holds:
\begin{itemize}
 \item[(i)]  For $\vec \mu<\vec\lambda$ and $u\in W^{1/2,2}_{\vec 1-\vec\mu}(\G)$ the discretization error estimates
\begin{align*}
\|y(u)-y_h(u)\|_{L^2(\O)}+h\|y(u)-y_h(u)\|_{H^1(\O)}\le ch^2\\
\|p(y(u))-p_h(y(u))\|_{L^2(\O)}+h\|p(y(u))-p_h(y(u))\|_{H^1(\O)}\le ch^2
\end{align*}
are valid.
\item[(ii)] For $u\in L^2(\G)$ there is a $\varepsilon>0$ arbitrarily small such that 
\begin{align}\label{fe-error-inf}
 \|y(u)-y_h(u)\|_{L^\infty(\O)}+\|p(y(u))-p_h(y(u))\|_{L^\infty(\O)}\le ch^{1/2-\varepsilon}.
\end{align}
\item[(iii)] For every $u_1,u_2\in L^2(\G)$ and $y_1,y_2\in L^\infty(\O)$ there holds
\begin{align*}
\|y(u_1)-y(u_2)\|_{H^1(\O)}+\|y_h(u_1)-y_h(u_2)\|_{H^1(\O)}&\le c\|u_1-u_2\|_{L^2(\G)},\\
\|p(y_1)-p(y_2)\|_{H^1(\O)}+\|p_h(y_1)-p_h(y_2)\|_{H^1(\O)}&\le c\|y_1-y_2\|_{L^2(\O)}.
\end{align*}
\item[(iv)]
Moreover, if $u_h\rightharpoonup u$ weakly in $L^2(\G)$, then $y_h(u_h)\rightarrow y(u)$ and $p_h(y_h(u_h))\rightarrow p(y(u))$ strongly in $C^0(\bar\O)$.
\end{itemize}
\end{theorem}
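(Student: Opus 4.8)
The strategy is to reduce each claim to the linear elliptic machinery of Lemma~\ref{feerrorgenPDE} by a fixed-point/linearization argument, exactly as in the classical treatment of semilinear problems in \cite{AradaCasasTroeltzsch:2002,CasasMateosTroeltzsch:2005}. The key observation is that, because $d_y(x,\cdot)\ge 0$, both the continuous state $y(u)$ and the discrete state $y_h(u)$ solve problems of the form covered by Lemma~\ref{pro:reglinearPDE} and Lemma~\ref{feerrorgenPDE} once one writes $d(x,y(u))=d(x,y(u))-d(x,0)+d(x,0)$ or, more usefully, introduces the mean-value coefficient
\[
  \alpha_u(x):=\int_0^1 d_y(x,\theta y(u)(x))\,d\theta\ge 0,
\]
so that $y(u)$ satisfies $-\Delta y(u)+\alpha_u y(u)=-d(x,0)$ with Neumann datum $u$. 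This $\alpha_u$ is in $L^\infty(\O)$ with a bound depending only on $M:=\sup_u\|y(u)\|_{L^\infty}$, which is itself uniformly bounded for $u\in U_{ad}$ (bounded set in $L^2(\G)$) by \cite{Casas:1993}; moreover, since $y(u)\in C^{0,\epsilon}(\bar\O)$ by Theorem~\ref{firstorder}-type arguments (more precisely by Lemma~\ref{genPDEsol} and Sobolev embedding when $u$ is regular enough), $\alpha_u\in C^{0,\sigma'}(\bar\O)$ with a uniform bound. The adjoint state $p(y)$ already has the required linear structure with coefficient $d_y(x,y)$, again nonnegative and, when $y\in C^{0,\epsilon}$, Hölder continuous with uniform bounds.

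For \textbf{(i)}, I would not apply Lemma~\ref{feerrorgenPDE} directly to $y(u)$ (whose coefficient is $\alpha_u$, not discretized the same way), but instead compare $y_h(u)$ with the Galerkin solution $\tilde y_h$ of the linear problem with coefficient $\alpha_u$ and right-hand side $-d(x,0)$: by Lemma~\ref{feerrorgenPDE}(i), $\|y(u)-\tilde y_h\|_{L^2}+h\|y(u)-\tilde y_h\|_{H^1}\le ch^2$, using $d(\cdot,0)\in L^2(\O)\hookrightarrow W^{0,2}_{\vec1-\vec\mu}(\O)$ and $u\in W^{1/2,2}_{\vec1-\vec\mu}(\G)$, valid for $\vec\mu<\vec\lambda$. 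Then $e_h:=\tilde y_h-y_h(u)$ satisfies a discrete equation whose right-hand side is $\into{(d(x,y_h(u))-d(x,y(u)))v_h}+\into{(\alpha_u(y(u)-\tilde y_h))v_h}$-type terms; using the Lipschitz/monotonicity of $d$ and the already-established $L^\infty$-convergence of $y_h(u)$ (this is where a bootstrapping with part (ii) enters, or rather the elementary a priori $L^2$ bound suffices here), one absorbs the bad term and concludes $\|e_h\|_{L^2}+h\|e_h\|_{H^1}\le ch^2$. The adjoint estimate is handled the same way, with the extra point that the right-hand side $y(u)-y_d$ lies in $C^{0,\sigma}(\bar\O)$ hence in $W^{0,2}_{\vec1-\vec\mu}(\G)$-compatible spaces, and that $p(y(u))-p_h(y(u))$ picks up an additional $\|y(u)-y_h(u)\|_{L^2}\le ch^2$ contribution from perturbing the coefficient and the data — still $O(h^2)$.

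For \textbf{(ii)}, the $H^{3/2}(\O)\hookrightarrow W^{1,q}(\O)$ regularity (any $q<\infty$ near $3/2$ allows $L^\infty$ bounds with a loss) together with the standard $L^\infty$ finite element error estimate for $H^{3/2}$-regular solutions on graded meshes (giving essentially $h^{1/2}$ up to $h^{-\varepsilon}$) yields the bound for the linearized comparison solution; the semilinear correction is lower order by the same Lipschitz argument. For \textbf{(iii)}, subtract the two (continuous or discrete) equations, test with the difference $y(u_1)-y(u_2)$, and use $d_y\ge 0$ to kill the zeroth-order term, leaving $\|\nabla(y(u_1)-y(u_2))\|_{L^2}^2\le \intg{(u_1-u_2)(y(u_1)-y(u_2))}\le \|u_1-u_2\|_{L^2(\G)}\|y(u_1)-y(u_2)\|_{L^2(\G)}$ and closing via the trace theorem and Poincaré-type control of the $H^1$ norm (the $E_\O$ set with $d_y\ge c_\Omega$ from (A\ref{A6}) provides coercivity); the discrete version is identical with $V_h$-test functions, and the adjoint statements follow from the linear structure of the adjoint equations plus the uniform bound on $d_y$. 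For \textbf{(iv)}, weak $L^2$ convergence $u_h\rightharpoonup u$ gives, via (iii) and compact embedding $H^1(\O)\hookrightarrow\hookrightarrow C^0$? — no, one needs the $L^\infty$ estimate of (ii) plus an Aubin–Nitsche/compactness argument: $y_h(u_h)$ is bounded in $H^{3/2-\varepsilon}$-type norm uniformly, hence precompact in $C^0(\bar\O)$, and any limit point solves the continuous equation with datum $u$, which is unique, so the whole sequence converges; the same for $p_h(y_h(u_h))$ since $y_h(u_h)\to y(u)$ in $C^0$ forces the coefficients and data to converge.

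\textbf{The main obstacle.} The delicate point is the treatment of the semilinear correction term in (i) on graded meshes: one must show that the discrete solution $y_h(u)$ is uniformly bounded in $L^\infty(\O)$ (so that the Lipschitz constant $L_{d,M}$ from (A\ref{A4}) applies with a fixed $M$) and that the error $\into{(d(x,y(u))-d(x,y_h(u)))v_h}$, after testing with $v_h=e_h$, produces a term $c\|y(u)-y_h(u)\|_{L^2}\|e_h\|_{L^2}$ that can be absorbed — this requires the nonnegativity of $d_y$ to provide coercivity uniformly in $h$ and care that the weighted-space norms of $u$ and $d(\cdot,0)$ are genuinely finite, i.e., that $\vec\mu<\vec\lambda$ is the right condition. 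This is exactly the argument of \cite[Section 4]{CasasMateosTroeltzsch:2005}, transplanted to the weighted/graded setting of \cite{ApelPfeffererRoesch:2012}; the references in the statement indicate the authors will cite those works, and the only genuinely new work is bookkeeping with the weighted norms.
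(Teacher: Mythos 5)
Your strategy lands in the same family of arguments (the semilinear machinery of \cite{AradaCasasTroeltzsch:2002,CasasMateosTroeltzsch:2005}) that the paper itself leans on, but the route differs in the two parts that carry the real content. For (i) the paper does not introduce an auxiliary Galerkin solution of a frozen-coefficient problem: it quotes a C\'ea-type estimate for the semilinear equation from \cite{CasasMateos:2002-2} (see \eqref{eq:cea}), combines it with the weighted interpolation estimate for $y\in W^{2,2}_{\vec 1-\vec\mu}(\O)$ coming from Lemma~\ref{genPDEsol}, and doubles the order by duality; the adjoint bound is then a direct consequence of Lemma~\ref{feerrorgenPDE}, essentially as you do. Your decomposition $y-y_h=(y-\tilde y_h)+(\tilde y_h-y_h)$ with the mean-value coefficient $\alpha_u$ is workable: $\alpha_u\ge c_\Omega$ on $E_\O$ gives $h$-uniform coercivity, Lemma~\ref{feerrorgenPDE}(i) applies since $d(\cdot,0)\in L^2(\O)\hookrightarrow W^{0,2}_{\vec 1-\vec\mu}(\O)$, and the absorption argument closes; what it buys is that everything is reduced to the linear lemma already proved in the paper, at the price of comparing two different zero-order coefficients.

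Two points need repair, and they are exactly where the graded meshes matter. First, your remark that ``the elementary a priori $L^2$ bound suffices'' to absorb the semilinear correction is false under the standing assumptions: $d$ may be of cubic type (as in the paper's numerical example), so $d_y$ is not globally bounded, and the mean-value coefficient between $y(u)$ and $y_h(u)$ is only under control once $\|y_h(u)\|_{L^\infty(\O)}$ is bounded uniformly in $h$. You do flag this as the main obstacle, but the references you invoke establish the uniform discrete $L^\infty$ bound for quasi-uniform meshes, so on the graded meshes used here it cannot simply be quoted. Second, and relatedly, your part (ii) black-boxes ``the standard $L^\infty$ finite element estimate for $H^{3/2}$-regular solutions on graded meshes'', which is the one step where the grading genuinely enters; the paper proves it directly by splitting off the nodal interpolation error ($ch^{1/2}|y|_{H^{3/2}(\O)}$) and, on the element $T^*$ where $|I_hy-y_h|$ is maximal, using the inverse inequality $\|\cdot\|_{L^\infty(T^*)}\le ch_{T^*}^{-2/p}\|\cdot\|_{L^p(T^*)}$ together with $H^1(\O)\hookrightarrow L^p(\O)$ and the C\'ea estimate, which yields $ch^{1/2-2/(p\bar\mu)}$ and then the claim for $p$ large. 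If you establish (ii) first along these lines (it needs no discrete $L^\infty$ bound beyond the quoted C\'ea estimate), the uniform bound required in your (i) follows and the circularity in your sketch disappears. Parts (iii) and (iv) are fine at sketch level and match what the paper delegates to the literature, except that in (iv) you should apply compactness to the continuous states $y(u_h)$ (via $H^{3/2}(\O)\hookrightarrow\hookrightarrow C^0(\bar\O)$) and add $\|y(u_h)-y_h(u_h)\|_{L^\infty(\O)}\to 0$ from (ii), rather than arguing precompactness in $C^0(\bar\O)$ for the discrete functions themselves.
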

\begin{proof}We will prove the theorem for the state. The corresponding proof for the adjoint state can either be done analogously using the estimates for the states where required or is simply a consequence of Lemma~\ref{feerrorgenPDE}.

(i) Due to Assumption (A\ref{A4}) a generalization of Cea's Lemma to semilinear elliptic partial differential equations is available, cf. \cite[Lemma 2, Theorem 2]{CasasMateos:2002-2}. In particular we have
\begin{equation}\label{eq:cea}
  \|y(u)-y_h(u)\|_{H^1(\O)}\le c\inf_{v_h\in V_h}\|y-v_h\|_{H^1(\O)}.
\end{equation}
By means of Lemma~\ref{genPDEsol}~(\ref{Reg2}) we derive analogously to \cite[Lemma 4.1]{ApelPfeffererRoesch:2010}
\[
\|y(u)-y_h(u)\|_{H^1(\O)}\le ch\|y\|_{W^{2,2}_{\vec 1-\vec\mu}(\Omega)}\le ch
\]
for mesh grading parameters $\vec \mu<\vec\lambda$. Following the lines of \cite[Lemma 4]{CasasMateos:2002-2} and \cite[Lemma 4.1]{ApelPfeffererRoesch:2010} one can double the order of convergence in the $L^2(\O)$-norm.

(ii) For a control $u\in L^2(\G)$ we can only assure that the state belongs to the space $H^{3/2}(\O)$, cf. Lemma~\ref{genPDEsol}~(\ref{Reg1}). We proceed with
\[
  \|y(u)-y_h(u)\|_{L^\infty(\O)}\le \|y(u)-I_hy(u)\|_{L^\infty(\O)}+\|I_hy(u)-y_h(u)\|_{L^\infty(\O)},
\]
where $I_h$ denotes the classical nodal interpolation operator. Note that $I_hy(u)$ is well defined due to the embedding $H^{3/2}(\O)\hookrightarrow C^0(\bar\O)$. Using standard techniques of interpolation error estimates, the first term can be estimated by
\[
  \|y(u)-I_hy(u)\|_{L^\infty(\O)}\le ch^{1/2}|y(u)|_{H^{3/2}(\O)}.
\]
Next, we assume that $|I_hy(u)-y_h(u)|$ admits its maximum in an element $T^*$. By means of an inverse estimate, the embedding $H^1(\O)\hookrightarrow L^p(\O)$ $(p<\infty)$,~\eqref{eq:cea} and standard interpolation error estimates we derive
\begin{align*}
\|I_hy(u)-y_h(u)\|_{L^\infty(\O)}&= \|I_hy(u)-y_h(u)\|_{L^\infty(T^*)}\le ch_{T^*}^{-2/p} \|I_hy(u)-y_h(u)\|_{L^p(T^*)}\\
&\le c\left(h_{T^*}^{-2/p}\|y(u)-y_h(u)\|_{H^1(\O)}+\|y(u)-I_hy(u)\|_{L^\infty(\O)}\right)\\
&\le c\left(h_{T^*}^{-2/p}\|y(u)-I_hy(u)\|_{H^1(\O)}+\|y(u)-I_hy(u)\|_{L^\infty(\O)}\right)\\
&\le c(h^{1/2}h_{T^*}^{-2/p}+h^{1/2})|y(u)|_{H^{3/2}(\O)}\\
&\le ch^{\frac{1}{2}-\frac{2}{p\bar\mu}}|y(u)|_{H^{3/2}(\O)}
\end{align*}
with $\bar\mu:=\min_j\{\mu_j\}$.
The last estimate is due to the definition \eqref{mesh} of the mesh size of a triangle $T$. Hence, the assertion follows since $p$ can be chosen arbitrarily large.

(iii) The estimates are obtained in a standard way using the Assumptions (A\ref{A4}) and (A\ref{A6}), see also~\cite{AradaCasasTroeltzsch:2002}.

(iv) The proof of the uniform convergence of the state and the adjoint state can be found in \cite{CasasMateos:2002-2}.
\end{proof}
Now, we can prove the convergence of the discretizations. For the proof we refer to Theorem 4.4 and Theorem 4.5 in \cite{CasasMateosTroeltzsch:2005} having regard to the results of Theorem~\ref{firstorder} and Theorem~\ref{errorestimates}.
\begin{theorem}\label{uhbarconv}
 Let Assumption~\ref{Psetting} be satisfied. Moreover, let $\ub$ be a local minimum of problem (P) satisfying the second order sufficient optimality conditions given in Theorem \ref{suffcond}. Then there exist $\varepsilon>0$ and $h_0>0$ such that (P$_h$) has a local minimum $\bar u_h$ in the $L^\infty(\Gamma)$-ball around $\ub$ with radius $\varepsilon$ for every $h<h_0$. Moreover, the following convergences hold true
\begin{equation*}
\lim\limits_{h\to 0}J_h(\bar u_h)=J(\ub)\quad\text{and}\quad\lim\limits_{h\to0}\|\ub-\bar u_h\|_{L^\infty(\G)}=0. 
\end{equation*}
\end{theorem}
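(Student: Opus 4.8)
\textit{Proof proposal.} The plan is to reproduce the by-now-classical localization argument of \cite{CasasMateosTroeltzsch:2005}. First I would exploit the quadratic growth property from Theorem~\ref{suffcond}: there are $\beta>0$ and $\varrho>0$ with $J(u)\ge J(\ub)+\beta\|u-\ub\|_{L^2(\G)}^2$ for all $u\in U_{ad}$ with $\|u-\ub\|_{L^2(\G)}\le\varrho$; in particular $\ub$ is the \emph{unique} global minimizer of $J$ over $U_{ad}\cap\overline B_\varrho(\ub)$, where $\overline B_\varrho(\ub)$ is the closed $L^2(\G)$-ball. For each $h$ I consider the auxiliary discrete problem (P$_h^\varrho$) of minimizing $J_h$ over $U_{ad,h}\cap\overline B_\varrho(\ub)$. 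This set is nonempty — it contains the $L^2(\G)$-orthogonal projection $Q_h\ub$ of $\ub$ onto $U_h$, whose edgewise means inherit the bounds $u_a\le\cdot\le u_b$ and which satisfies $\|Q_h\ub-\ub\|_{L^2(\G)}\to0$ (with a rate, since $\ub\in H^1(\G)$ by Theorem~\ref{firstorder}) — and it is closed, bounded and finite-dimensional, hence compact; as $J_h$ is continuous, (P$_h^\varrho$) possesses a global solution $\uhb$.

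Second, I would prove $\uhb\to\ub$ in $L^2(\G)$. The $\uhb$ are uniformly bounded in $L^2(\G)$ (they lie in $U_{ad}$), so along a subsequence $\uhb\rightharpoonup u^*$ weakly in $L^2(\G)$; then $u^*\in U_{ad}$ (convex closed sets are weakly closed) and $\|u^*-\ub\|_{L^2(\G)}\le\varrho$ by weak lower semicontinuity of the norm. By Theorem~\ref{errorestimates}(iv), $y_h(\uhb)\to y(u^*)$ in $C^0(\bar\O)$, so $\tfrac12\|y_h(\uhb)-y_d\|_{L^2(\O)}^2\to\tfrac12\|y(u^*)-y_d\|_{L^2(\O)}^2$, while $\tfrac\nu2\|\uhb\|_{L^2(\G)}^2$ is weakly lower semicontinuous; hence $J(u^*)\le\liminf J_h(\uhb)\le\limsup J_h(\uhb)\le\lim J_h(Q_h\ub)=J(\ub)$, the last equality again by Theorem~\ref{errorestimates}(iv) together with $\|Q_h\ub-\ub\|_{L^2(\G)}\to0$. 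The quadratic growth condition now forces $u^*=\ub$; since the limit is independent of the subsequence, $\uhb\rightharpoonup\ub$ in $L^2(\G)$. Moreover the displayed chain collapses to $J_h(\uhb)\to J(\ub)$ and $\|\uhb\|_{L^2(\G)}\to\|\ub\|_{L^2(\G)}$, so weak convergence plus norm convergence in the Hilbert space $L^2(\G)$ yields $\uhb\to\ub$ strongly in $L^2(\G)$.

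Third — the step I expect to be the main obstacle — I would upgrade to $L^\infty(\G)$ and verify interiority. Since $\|\uhb-\ub\|_{L^2(\G)}\to0$, for $h$ small $\uhb$ lies in the interior of $\overline B_\varrho(\ub)$, so the ball constraint in (P$_h^\varrho$) is inactive and $\uhb$ is in fact a local minimum of the unconstrained problem (P$_h$); in particular it satisfies the discrete variational inequality \eqref{varineqdis}, which for piecewise constant controls is equivalent to the edgewise projection formula $\uhb|_E=\Pi_{[u_a,u_b]}\bigl(-\tfrac1\nu\,\tfrac1{|E|}\int_E\phb\bigr)$ for every $E\in\mathcal E_h$. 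Using the nonexpansiveness of $\Pi_{[u_a,u_b]}$, comparing with \eqref{projform}, and splitting $\tfrac1{|E|}\int_E\phb-\pb(x)$ into the mean of $\phb-\pb$ on $E$ plus the oscillation of $\pb|_\G$ over $E$, one obtains $\|\uhb-\ub\|_{L^\infty(\G)}\le\tfrac1\nu\|\phb-\pb\|_{L^\infty(\O)}+\tfrac1\nu\sup_E\sup_{x,x'\in E}|\pb(x)-\pb(x')|$, where $\operatorname{diam}E\le c\,h$ for all $E$. The first term tends to zero by Theorem~\ref{errorestimates}(iv), applied to $\phb=p_h(y_h(\uhb))\to p(y(\ub))=\pb$ in $C^0(\bar\O)$ (legitimate since $\uhb\rightharpoonup\ub$), and the second by uniform continuity of $\pb|_\G\in C^{0,\epsilon}(\G)$ (Theorem~\ref{firstorder}). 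Hence $\|\uhb-\ub\|_{L^\infty(\G)}\to0$, so there are $\varepsilon>0$ and $h_0>0$ such that for $h<h_0$ the element $\uhb$ is a local minimum of (P$_h$) in the $L^\infty(\G)$-ball of radius $\varepsilon$ around $\ub$, with $J_h(\uhb)\to J(\ub)$ and $\|\ub-\uhb\|_{L^\infty(\G)}\to0$, as claimed. The delicate point is precisely this bootstrap: the $L^2$-convergence must be extracted purely from weak compactness, lower semicontinuity, the recovery sequence and strict local optimality, \emph{without} the projection formula — which is unavailable until the ball constraint is known to be inactive — and only then can one close the loop to the $L^\infty$ statement via the uniform finite element convergence of the adjoint state.
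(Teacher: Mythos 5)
Your argument is correct and follows essentially the same route as the proof the paper delegates to Casas, Mateos and Tr\"oltzsch: auxiliary problems over $U_{ad,h}$ intersected with a ball around $\ub$, existence by compactness, convergence via weak compactness, the uniform convergence results of Theorem~\ref{errorestimates}(iv), the recovery sequence $Q_h\ub$ and the quadratic growth from Theorem~\ref{suffcond}, followed by inactivity of the ball constraint and the edgewise projection formula together with $\phb\to\pb$ in $C^0(\bar\O)$ to upgrade to $L^\infty(\G)$. The only cosmetic deviation is that you localize in the $L^2(\G)$-ball rather than the $L^\infty(\G)$-ball, which is legitimate here because Theorem~\ref{suffcond} gives quadratic growth in an $L^2$-neighborhood (no two-norm discrepancy in this problem).
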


\section{Auxiliary estimates for the postprocessing approach}
In the sequel we denote by $\ub$ a fixed local solution of (P) satisfying the second order sufficient optimality conditions and by $\uhb$ the associated local solution of (P$_h$) converging uniformly to $\ub$. Moreover, the corresponding states and adjoint states are denoted by $\yb=y(\ub)$, $\pb=p(\yb)$ and $\yhb=y_h(\uhb)$, $\phb=p_h(\yhb)$, respectively. In our error analysis we will need a discrete control $u_h$, which is admissible for (P$_h$), close to the optimal control $\ub$ and the direction $\uhb-u_h$ should belong to the critical cone $C_\tau(\ub)$, see \eqref{criticalcone}, such that the second order sufficient condition can be applied. An intuitive choice is given by $u_h=R_h\ub$, where $R_h: C^0(\G)\rightarrow U_h$ denotes the 0-interpolator onto $U_h$ defined by:
\[
  (R_hf)(x)=f(S_E),\quad x\in E,\, E\in \mathcal E_h
\]
and $S_E$ is the midpoint of the edge $E$. The element $R_h\ub$ is indeed admissible for (P$_h$) and close to $\ub$ but $\uhb-R_h\ub$ does not necessarily belong to the critical cone. To overcome this difficulty, we modify the interpolator $R_h$. Due to the regularity of the adjoint state, see Theorem~\ref{firstorder} and Theorem~\ref{regadjoint}, and the fact that the optimal control is given by the projection formula \eqref{projform}, we can distinguish between active points $(\bar u(x)\in\{u_a,u_b\})$ and inactive points $(\bar u(x)\in(u_a,u_b))$. Based on this we can classify the edges $E\in \mathcal E_h$ in the following two sets $K_1$ and $K_2$ as in Section 2 of \cite{roeschsimon:2007}:
\begin{align*}
  K_1&:=\left\{E\in \mathcal{E}_h:\ E\ \text{contains active and inactive points}\right\},\\
  K_2&:=\left\{E\in \mathcal{E}_h:\ E\ \text{contains only active points or only inactive points}\right\}.
\end{align*}
The modified interpolation operator is now defined by
\begin{equation}\label{def_Rhu}
(R_h^{\ub} f)(x):=\left\{
\begin{array}{ll}
(R_hf)(x),&\quad\text{for }x\in E,\ E\in K_{2}\\
f(x_K),&\quad\text{for } x\in E,\ E\in K_1
\end{array}\right.
\end{equation}
with $x_K\in E$ such that either $\ub(x_K)=u_a$ or $\ub(x_K)=u_b$. We make the following assumption on the measure of the set $K_1$ which is valid in many practical applications.
\begin{assumption}\label{bound_ass}
	We suppose that $\text{\normalfont meas}(K_1)\le ch$.
\end{assumption}
\begin{remark}
Compared to linear elliptic optimal control problems the Assumption~\ref{bound_ass} is slightly stronger, cf.~\cite{MateosRoesch:2008,ApelPfeffererRoesch:2010,ApelPfeffererRoesch:2012}. In the linear case the set $K_1$ is only the union of all elements $E\in\mathcal{E}_h$ where the optimal control has kinks, whereas the present definition of the set $K_1$ also admits elements where the optimal control intersects smoothly the control constraints. However, the definition of the modified interpolation operator $R_h^{\ub}$ makes the stronger assumption necessary to prove the superconvergence properties of the postprocessed control in Section~\ref{sec:mainresults}.
\end{remark}
Now we collect approximation properties of the introduced interpolator $R_h^{\ub}$ that will be intensively used in the sequel of the paper.
\begin{lemma}\label{Rhestimates}
(i) Let $\mathcal{S}=\{1,\ldots,m\}$ and $j\in\{0\}\cup\mathcal{S}$.
For $E\in\mathcal E_{h,j}\cap K_1$ the following estimates hold true
\[
  \left|\int\limits_E(f-R_h^{\ub}f)\ds\right|\le \begin{cases}
																													ch|E||f|_{W^{1,\infty}_{1-\mu_j}(E)}&\text{if }j\in\mathcal{S},\,\mu_j\in(0,1],\,f\in W^{1,\infty}_{1-\mu_j}(E)\\
                                                          ch|E||f|_{W^{1,\infty}(E)}&\text{if }j=0,\,f\in W^{1,\infty}(E)\\
                                                       \end{cases}.
\]
For $E\in\mathcal E_{h,j}\cap K_2$ the following estimates are valid
\[
  \left|\int\limits_E(f-R_h^{\ub}f)\ds\right|\le \begin{cases}
																													ch^2|E|^{1/2}|f|_{W^{2,2}_{2(1-\mu_j)}(E)}&\text{if }j\in\mathcal{S},\,\mu_j\in(1/4,1],\,f\in W^{2,2}_{2(1-\mu_j)}(E)\\
                                                          ch^2|E|^{1/2}|f|_{W^{2,2}(E)}&\text{if }j=0,\,f\in W^{2,2}(E)
                                                       \end{cases}.
\]
(ii) Let $E\in\mathcal E_{h}$ and $f\in H^1(E)$. Then the estimate
\[
  \|f-R_h^{\bar u}f\|_{L^2(E)}\leq c h |f|_{H^1(E)}
\]
holds.
\end{lemma}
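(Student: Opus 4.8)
The plan is to reduce every one of these inequalities to a one-dimensional estimate on a single edge $E\in\mathcal E_h$, on which $R_h^{\ub}f$ is a \emph{constant}: the midpoint value $f(S_E)$ if $E\in K_2$, and the nodal value $f(x_K)$ at an active point $x_K\in E$ if $E\in K_1$. I would parametrise $E$ by arc length; when $E\in\mathcal E_{h,j}$ with $j\in\mathcal S$ I put the corner $x^{(j)}$ at the origin, so that $r_j$ becomes the arc-length variable and $E$ turns into an interval $[a,a+h_E]$ with $a=r_{E,j}$. From the grading condition~\eqref{mesh} I will use throughout that, for $E\in\mathcal E_{h,j}$, $j\in\mathcal S$, either $E$ touches the corner, in which case $h_E\sim h^{1/\mu_j}$ and $r_j\le h_E$ on $E$, or it does not, in which case $h_E\sim h\,r_{E,j}^{1-\mu_j}$ and $r_j\sim r_{E,j}$ on $E$; for $E\in\mathcal E_{h,0}$ one has $h_E\sim h$; and in every case $h_E\le c_2h$.

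The $K_2$ estimate in (i) is the heart of the matter, and the point is that the midpoint rule is exact for affine functions. I would write the quadrature error through its Peano kernel,
\[
  \int_E\bigl(f-R_h^{\ub}f\bigr)\ds=\int_E K_E(t)\,\partial_t^2f(t)\ds,\qquad K_E(t)=\tfrac12\min\bigl((t-a)^2,(a+h_E-t)^2\bigr)\in[0,\tfrac18h_E^2],
\]
which I would derive first for $f\in H^2(E)$ from the second-order Taylor expansion of $f$ about $S_E$ (the affine part integrates to zero by symmetry) and then extend to $f\in W^{2,2}_{2(1-\mu_j)}(E)$ by truncating near the corner and passing to the limit, all the integrals involved being absolutely convergent. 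Inserting $\partial_t^2f=r_j^{-2(1-\mu_j)}\,r_j^{2(1-\mu_j)}\partial_t^2f$ and using the Cauchy--Schwarz inequality then reduces the claim to the geometric estimate
\[
  \int_E K_E(t)^2\,r_j(t)^{-4(1-\mu_j)}\ds\le c\,h^4h_E,
\]
whose square root is exactly $c\,h^2|E|^{1/2}$. This I would check in one line on each of the three edge types: on the corner edge the integrand equals a fixed constant times $t^{4\mu_j}$, hence is integrable for every $\mu_j>0$ (so in particular for $\mu_j\in(1/4,1]$) with integral $\sim h_E^{4\mu_j+1}=h^4h_E$; on a non-corner edge $r_j\sim r_{E,j}=a$ and the integral is $\le c\,h_E^5a^{-4(1-\mu_j)}=c\,h^4h_E$; and on an edge of $\mathcal E_{h,0}$ one simply has $\int_E K_E^2\le c\,h_E^5\sim c\,h^4h_E$, which reproduces the unweighted seminorm.

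The $K_1$ estimate is one order weaker, since a bare nodal value reproduces only constants: writing $f(t)-f(x_K)=\int_{x_K}^t\partial_tf$ and applying Fubini gives $\int_E(f-R_h^{\ub}f)\ds=\int_E L_E(t)\,\partial_tf(t)\ds$ with $|L_E|\le h_E$, so the left-hand side is bounded by $h_E\,\|r_j^{1-\mu_j}\partial_tf\|_{L^\infty(E)}\,\int_E r_j^{-(1-\mu_j)}\ds$, and $\int_E r_j^{-(1-\mu_j)}\ds\le c\,h$ by the same case distinction ($\sim h_E^{\mu_j}/\mu_j=h/\mu_j$ on the corner edge, $\sim a^{-(1-\mu_j)}h_E=h$ on the other edges of $\mathcal E_{h,j}$, and $\sim h$ on $\mathcal E_{h,0}$). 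For part (ii) no cancellation is needed: on every edge $R_h^{\ub}f$ equals $f(\xi)$ for a single point $\xi\in E$, and for $f\in H^1(E)$ the Cauchy--Schwarz inequality gives $|f(x)-f(\xi)|\le|E|^{1/2}|f|_{H^1(E)}$ for all $x\in E$, whence $\|f-R_h^{\ub}f\|_{L^2(E)}\le|E|\,|f|_{H^1(E)}=h_E|f|_{H^1(E)}\le c\,h\,|f|_{H^1(E)}$.

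The only genuinely delicate step is the $K_2$ estimate on the edge abutting a corner: there $\partial_t^2f$ need not be square-integrable --- it belongs only to the weighted space --- so both the Peano identity and the Cauchy--Schwarz step must be justified through the truncation argument mentioned above, and the exponent bookkeeping $h_E\sim h^{1/\mu_j}$, $r_j\le h_E$ has to be carried through carefully to confirm that $K_E^2\,r_j^{-4(1-\mu_j)}$ is still integrable and that its integral is precisely of order $h^4h_E$. Everything else amounts to routine scaling.
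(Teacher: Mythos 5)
Your proof is correct and, in substance, it follows the same mechanism the paper relies on: the quadrature induced by $R_h^{\ub}$ reproduces constants on every edge and is the (exact-for-linears) midpoint rule on $K_2$ edges, and everything then reduces to weighted one-dimensional Taylor/Peano-kernel estimates combined with the grading condition \eqref{mesh}. The difference is purely one of presentation: the paper does not carry out these computations but defers them to \cite[Section 6]{ApelPfeffererRoesch:2012}, citing only the two reproduction properties of $R_h^{\ub}$ (cf.~\eqref{eq:deny}), and obtains (ii) from the Deny--Lions lemma, whereas you reconstruct the deferred estimates directly (Peano kernel $K_E\le h_E^2/8$ on $K_2$, first-order kernel $|L_E|\le h_E$ on $K_1$, and a pointwise Cauchy--Schwarz bound for (ii)). Your direct argument for (ii) also sidesteps the small uniformity issue in Deny--Lions (the interpolation point $x_K$ varies with $E$), and your corner-edge bookkeeping correctly isolates the only delicate point, namely justifying the Peano identity by truncation when $\partial_t^2 f$ is only in the weighted $L^2$ space; your observation that the geometric integral $\int_E K_E^2 r_j^{-4(1-\mu_j)}\,ds\le ch^4h_E$ converges for all $\mu_j>0$ is consistent with the lemma, which only asserts the estimate for $\mu_j\in(1/4,1]$ (the range actually used later, where it also guarantees finiteness of the weighted seminorms of $\pb$ and $\ub$). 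I see no gap.
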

\begin{proof}
  (i) The proofs given in \cite[Section 6]{ApelPfeffererRoesch:2012} for the interpolator $R_h$ can easily be adopted to the modified interpolator~$R_h^{\ub}$ by observing that
  \begin{align}
		R_h^{\bar u}p&=p\quad\forall p\in\mathcal{P}_0(E),\,\forall E\in \mathcal{E}_h,\label{eq:deny}\\
		\int_ER_h^{\bar u}p \,\mathrm{d}s&=\int_Ep \,\mathrm{d}s\quad\forall p\in\mathcal{P}_1(E),\,\forall E\in \mathcal{E}_h\cap K_2.\notag
  \end{align}
  (ii) Based on~\eqref{eq:deny} the estimate is a direct consequence of the Deny-Lions Lemma.
\end{proof}
In the sequel the following estimates regarding the second derivative of the cost functional $J$ and its discrete counterpart $J_h$ will be useful.
\begin{lemma}\label{J''estimates}
Suppose Assumption \ref{Psetting} is satisfied. 
\begin{itemize}
 \item[(i)] Let $u\in L^2(\Gamma)$ be given. Then there holds for all $v\in L^2(\Gamma)$
\[
  |\left(J''(u)-J_h''(u)\right)[v,v]|\le ch^{1/2-\varepsilon}\|v\|_{L^2(\G)}^2
\]
with some $\varepsilon>0$.
\item[(ii)] Let $u_1,\,u_2\in L^2(\Gamma)$ be given. Then there is the estimate
\[
   |\left(J_h''(u_1)-J_h''(u_2)\right)[v,v]|\le c\|u_1-u_2\|_{L^2(\G)}\|v\|_{L^2(\G)}^2
\]
for all $v\in L^2(\Gamma)$.
\end{itemize}
\end{lemma}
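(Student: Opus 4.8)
The plan is to write out the representation of the second derivative $J''(u)[v,v]$ and $J_h''(u)[v,v]$ explicitly and to bound the difference term by term. Recall that, with $y_v = G'(u)v$ solving the linearised continuous state equation and $y_h^v$ solving~\eqref{Gh'}, and with $p = p(y(u))$, $p_h = p_h(y_h(u))$,
\[
  J''(u)[v,v]-J_h''(u)[v,v] = \into{\left(y_v^2 - (y_h^v)^2\right)} - \into{\left(p\,d_{yy}(x,y(u))y_v^2 - p_h\,d_{yy}(x,y_h(u))(y_h^v)^2\right)}.
\]
The first term is handled by writing $y_v^2-(y_h^v)^2 = (y_v-y_h^v)(y_v+y_h^v)$, using the Cauchy--Schwarz inequality, the $H^1$-stability estimates $\|y_v\|_{H^1(\O)}, \|y_h^v\|_{H^1(\O)} \le c\|v\|_{L^2(\G)}$ from Theorem~\ref{errorestimates}~(iii) (applied to the linearised equations, whose right-hand side data is $v$), and a discretisation error estimate $\|y_v - y_h^v\|_{L^2(\O)} \le c h^{1/2-\varepsilon}\|v\|_{L^2(\G)}$. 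This last bound follows from Theorem~\ref{errorestimates}~(ii): $y_v$ is the state associated with control $v\in L^2(\G)$ for the linear problem with coefficient $d_y(\cdot,y(u))\in L^\infty(\O)$, so the argument there (interpolation plus inverse estimate, using only $y_v\in H^{3/2}(\O)$) gives the $h^{1/2-\varepsilon}$ rate; alternatively one invokes Lemma~\ref{feerrorgenPDE}. Note the power $h^{1/2-\varepsilon}$, not $h^{3/2}$, precisely because for a mere $L^2(\G)$ right-hand side the linearised state is only $H^{3/2}$-regular.

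For the second term I would insert and subtract intermediate quantities to split the difference into three pieces:
\begin{align*}
  p\,d_{yy}(x,y(u))y_v^2 - p_h\,d_{yy}(x,y_h(u))(y_h^v)^2
    &= (p-p_h)\,d_{yy}(x,y(u))\,y_v^2\\
    &\quad + p_h\bigl(d_{yy}(x,y(u))-d_{yy}(x,y_h(u))\bigr)y_v^2\\
    &\quad + p_h\,d_{yy}(x,y_h(u))\bigl(y_v^2-(y_h^v)^2\bigr).
\end{align*}
Each piece is estimated in $L^1(\O)$. Using $\|d_{yy}(\cdot,y(u))\|_{L^\infty}\le c$ (from~(A\ref{A4}), since $y(u)\in L^\infty$), $\|p\|_{L^\infty}, \|p_h\|_{L^\infty}\le c$ (the continuous bound from Theorem~\ref{firstorder}; the discrete one from Theorem~\ref{errorestimates}~(ii),(iv) together with the $L^\infty$-error estimate, both uniform for $u$ in a bounded set), and $\|y_v\|_{H^1(\O)}\le c\|v\|_{L^2(\G)}$ hence $\|y_v\|_{L^4(\O)}^2\le c\|v\|_{L^2(\G)}^2$ by Sobolev embedding: the first piece is bounded by $\|p-p_h\|_{L^\infty(\O)}\cdot c\|v\|_{L^2(\G)}^2 \le c h^{1/2-\varepsilon}\|v\|_{L^2(\G)}^2$ by~\eqref{fe-error-inf}; the second piece by the uniform Lipschitz property of $d_{yy}$ in~(A\ref{A4}), which gives $\|d_{yy}(\cdot,y(u))-d_{yy}(\cdot,y_h(u))\|_{L^\infty(\O)}\le L_{d,M}\|y(u)-y_h(u)\|_{L^\infty(\O)}\le c h^{1/2-\varepsilon}$, again by~\eqref{fe-error-inf}; the third piece, after factoring $y_v^2-(y_h^v)^2=(y_v-y_h^v)(y_v+y_h^v)$, by Cauchy--Schwarz in $L^2$ as in the first term, giving $c h^{1/2-\varepsilon}\|v\|_{L^2(\G)}^2$. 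Collecting all contributions yields assertion~(i) with the worst exponent, $h^{1/2-\varepsilon}$.

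For part~(ii) the structure is the same but simpler, since now both operators are discrete and we compare at two different controls $u_1,u_2$. Writing $y_h^{v,i}$ for the solution of~\eqref{Gh'} with $u_h=u_i$ and $p_h^i = p_h(y_h(u_i))$, the difference $J_h''(u_1)[v,v]-J_h''(u_2)[v,v]$ is again split into a quadratic-mismatch term $\into{((y_h^{v,1})^2-(y_h^{v,2})^2)}$ and a cubic term; in each I insert/subtract as above. The key Lipschitz-type ingredients, all at the discrete level and established in Theorem~\ref{errorestimates}~(iii) (and its analogues for the linearised equations), are $\|y_h^{v,1}-y_h^{v,2}\|_{H^1(\O)}\le c\|y_h(u_1)-y_h(u_2)\|_{L^2(\O)}\le c\|u_1-u_2\|_{L^2(\G)}$ — the second step using the $H^1$-stability of $u\mapsto y_h(u)$ and the embedding $H^1(\O)\hookrightarrow L^2(\O)$ — together with $\|p_h^1-p_h^2\|_{L^\infty(\O)}\le c\|u_1-u_2\|_{L^2(\G)}$, which one gets either from a discrete maximum-norm stability estimate or, more cheaply, from the discrete $H^1$-stability combined with an inverse estimate as in the proof of Theorem~\ref{errorestimates}~(ii). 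The Lipschitz continuity of $d_{yy}$ from~(A\ref{A4}) controls the $\|d_{yy}(\cdot,y_h(u_1))-d_{yy}(\cdot,y_h(u_2))\|_{L^\infty(\O)}$ factor by $c\|u_1-u_2\|_{L^2(\G)}$ as well. Adding up gives the claimed bound.

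I expect the only genuinely delicate point to be justifying that all the stability and error constants — in particular the $L^\infty$-bounds on $p_h$ and the discrete $H^1$- and $L^\infty$-stability of the linearised solution operators — are uniform with respect to the control $u$ (resp. $u_1,u_2$) ranging over a fixed bounded set of $L^2(\G)$, and uniform in $h$. This uniformity is implicit in Theorem~\ref{errorestimates} and in the proof of Theorem~\ref{uhbarconv}, and follows from the fact that the relevant coefficients $d_y(\cdot,y(u)), d_y(\cdot,y_h(u))$ are bounded in $L^\infty(\O)$ uniformly (since the states are uniformly bounded), so that Lemma~\ref{feerrorgenPDE} and Lemma~\ref{pro:reglinearPDE} apply with $\alpha$-independent constants; I would state this once at the start of the proof and then use it freely.
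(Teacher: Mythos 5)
Your overall strategy is precisely the ``straightforward calculation'' that the paper delegates to \cite{AradaCasasTroeltzsch:2002} (Lemmas 6.2 and 6.3): write out the explicit formulas for $J''$ and $J_h''$, split the difference into the quadratic part and the $d_{yy}$-part, and control the pieces with the $L^\infty$-estimate \eqref{fe-error-inf}, the local Lipschitz continuity of $d_{yy}$ from (A\ref{A4}), and the stability estimates of Theorem~\ref{errorestimates}(iii). Two steps are, however, looser than your citations cover. First, in (i) the discrete quantities are linearized at the \emph{discrete} state: by \eqref{Gh'} the function $y_h^v$ carries the coefficient $d_y(x,y_h(u))$, and the adjoint entering $J_h''(u)$ is $p_h(y_h(u))$, whereas Lemma~\ref{feerrorgenPDE} and \eqref{fe-error-inf} only compare $y_v$ with the Galerkin solution for the coefficient $d_y(x,y(u))$, respectively $p(y(u))$ with $p_h(y(u))$. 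So $\|y_v-y_h^v\|_{L^2(\O)}$ and $\|p(y(u))-p_h(y_h(u))\|$ are not pure discretization errors; you need intermediate functions (the discrete linearized state with coefficient $d_y(x,y(u))$, and $p_h(y(u))$) and must absorb the coefficient/data perturbation via $\|d_y(\cdot,y(u))-d_y(\cdot,y_h(u))\|_{L^\infty(\O)}\le c\|y(u)-y_h(u)\|_{L^\infty(\O)}\le ch^{1/2-\varepsilon}$ and Theorem~\ref{errorestimates}(iii). This is routine and uses only tools you already invoke, but as written the step is not justified by the references you give.

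Second, in (ii) the claimed bound $\|p_h(y_h(u_1))-p_h(y_h(u_2))\|_{L^\infty(\O)}\le c\|u_1-u_2\|_{L^2(\G)}$ is problematic: the route ``$H^1$-stability plus inverse estimate'' produces an $h$-dependent factor (at best $|\ln h|^{1/2}$), so it does not yield the $h$-uniform constant asserted in the lemma, and a discrete maximum-norm stability estimate is not established in the paper. The repair is simply to avoid $L^\infty$ for that factor: estimate the corresponding piece by $\|p_h(y_h(u_1))-p_h(y_h(u_2))\|_{L^2(\O)}\,\|y_h^{v,1}\|_{L^4(\O)}^2$, using Theorem~\ref{errorestimates}(iii) and $H^1(\O)\hookrightarrow L^4(\O)$, exactly as you already do for the other pieces (and keep track of the factor $\|v\|_{L^2(\G)}$ that accompanies all bounds on the linearized states, which is dropped in your intermediate inequality $\|y_h^{v,1}-y_h^{v,2}\|_{H^1(\O)}\le c\|u_1-u_2\|_{L^2(\G)}$). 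With these two adjustments the argument closes and coincides with the proof the paper intends; your write-up is in fact more detailed than the paper's own, which consists of the above recipe plus the citation.
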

\begin{proof}
(i) Based on Assumption \ref{Psetting}, several finite element error estimates, particularly \eqref{fe-error-inf}, the assertion can be obtained by straightforward calculations. For details regarding a similar problem see e.g. \cite[Lemma 6.2]{AradaCasasTroeltzsch:2002}.

(ii) Again the estimate is straightforward using the local Lipschitz continuity of the second derivatives of $d$, see also \cite[Lemma 6.3]{AradaCasasTroeltzsch:2002} for a comparable result.
\end{proof}
\begin{lemma}\label{diff_jh_estimate}
Suppose that Assumption~\ref{Psetting} is fulfilled. Then there exist $\delta'>0$ and a mesh size $h_0>0$ such that for all $h<h_0$
\begin{equation*}
 \delta'\lnorm{\uhb-R_h^{\ub}\ub}{\G}^2\le(J_h'(\uhb)-J_h'(R_h^{\ub} \ub))(\uhb-R_h^{\ub}\ub).
\end{equation*}
\end{lemma}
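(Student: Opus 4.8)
The plan is to exploit the second order sufficient optimality condition \eqref{SSC} for $\ub$ together with the closeness of $R_h^{\ub}\ub$ to $\ub$ and the fact that the direction $v_h := \uhb - R_h^{\ub}\ub$ nearly lies in the critical cone $C_\tau(\ub)$. First I would write, by the mean value theorem, $(J_h'(\uhb) - J_h'(R_h^{\ub}\ub))(v_h) = J_h''(\hat u_h)[v_h,v_h]$ for some $\hat u_h$ on the segment between $\uhb$ and $R_h^{\ub}\ub$. Then I would estimate $J_h''(\hat u_h)[v_h,v_h]$ from below by $J''(\ub)[v_h,v_h]$ up to small errors: Lemma~\ref{J''estimates}~(ii) controls $|(J_h''(\hat u_h) - J_h''(\uhb))[v_h,v_h]|$ by $c\|\hat u_h - \uhb\|_{L^2(\G)}\|v_h\|_{L^2(\G)}^2$ and a second application (or a triangle-inequality chain through $J_h''(\ub)$) plus Lemma~\ref{J''estimates}~(i) bounds the passage from $J_h''$ to $J''$ by $ch^{1/2-\varepsilon}\|v_h\|_{L^2(\G)}^2$. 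Since $\|\hat u_h - \uhb\|_{L^2(\G)} \le \|\uhb - R_h^{\ub}\ub\|_{L^2(\G)} \le \|\uhb - \ub\|_{L^2(\G)} + \|\ub - R_h^{\ub}\ub\|_{L^2(\G)}$, and both terms tend to zero — the first by Theorem~\ref{uhbarconv}, the second by Lemma~\ref{Rhestimates}~(ii) and the regularity $\ub\in H^1(\G)$ from Theorem~\ref{firstorder} — all these perturbation terms are $o(1)$ as $h\to 0$. Hence for $h$ small enough, $(J_h'(\uhb) - J_h'(R_h^{\ub}\ub))(v_h) \ge J''(\ub)[v_h,v_h] - \tfrac{\delta}{2}\|v_h\|_{L^2(\G)}^2$.

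It remains to apply \eqref{SSC} to the direction $v_h$, which requires $v_h \in C_\tau(\ub)$. This is exactly why the modified interpolator $R_h^{\ub}$ was introduced: on edges $E\in K_2$ that contain only active points, $R_h^{\ub}\ub = R_h\ub$ attains the active bound value, so on those edges $v_h = \uhb - \ub$ pointwise a.e. has the correct sign prescribed by \eqref{strongactive} (because $\uhb\in U_{ad,h}$ respects the bounds); on edges $E\in K_1$, $R_h^{\ub}\ub \equiv \ub(x_K)\in\{u_a,u_b\}$ is constant equal to an active bound, and again $v_h$ has the right sign there; on edges $E\in K_2$ that contain only inactive points, $|\pb + \nu\ub| < \tau$ need not fail, but there the sign constraint in \eqref{strongactive} is vacuous and the constraint $v_h = 0$ on the strongly active set $\{|\pb+\nu\ub|>\tau\}$ is what must be checked. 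I would argue that for $h$ small the set $\{|\pb+\nu\ub|>\tau\}$ is covered by edges in $K_2$ consisting entirely of active points (using uniform continuity of $\pb+\nu\ub$ and the fact that on active points $|\pb+\nu\ub|\ge$ something related to the bounds), on which $v_h$ may be nonzero but the sign condition holds — so strictly one should take a slightly smaller $\tau'<\tau$ and check $v_h\in C_{\tau'}(\ub)$; since \eqref{SSC} with $\tau$ implies the coercivity on the smaller cone as well, this is harmless. Applying \eqref{SSC} gives $J''(\ub)[v_h,v_h]\ge \delta\|v_h\|_{L^2(\G)}^2$, and combining with the previous paragraph yields the claim with $\delta' = \delta/2$.

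The main obstacle I anticipate is the careful verification that $v_h = \uhb - R_h^{\ub}\ub$ (or its truncation to a slightly smaller critical cone) genuinely belongs to $C_\tau(\ub)$ — in particular handling the edges in $K_2$ that straddle the $\tau$-level set of $\pb+\nu\ub$ without containing an active point, and making precise that for $h$ below some threshold no such problematic edge exists. This is where Assumption~\ref{bound_ass} on $\text{meas}(K_1)$, the uniform convergence $\uhb\to\ub$ in $L^\infty(\G)$ from Theorem~\ref{uhbarconv}, and the continuity of $\pb$ (Theorem~\ref{firstorder}) must be used in concert. The purely analytic part — collecting the $o(1)$ perturbation bounds from Lemma~\ref{J''estimates} — is routine by comparison.
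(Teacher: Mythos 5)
Your mean--value--theorem step and the perturbation estimates via Lemma~\ref{J''estimates} are exactly the paper's argument and are fine. The gap is in the step you yourself flag as the main obstacle: membership of $v_h=\uhb-R_h^{\ub}\ub$ in the critical cone. The binding requirement in \eqref{strongactive} is that $v_h$ \emph{vanish} on the strongly active set $\{|\pb+\nu\ub|>\tau\}$, and your sketch does not deliver this. Covering that set by $K_2$-edges of active points and observing that ``the sign condition holds'' there is not enough, since a correct sign with $v_h\neq 0$ still violates the third condition in \eqref{strongactive}; and replacing $\tau$ by a smaller $\tau'<\tau$ goes in the wrong direction: it enlarges the set on which $v_h$ must vanish, i.e.\ $C_{\tau'}(\ub)\subset C_\tau(\ub)$, so it makes the membership you need harder, not easier. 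As written, you cannot invoke \eqref{SSC} for $v_h$.

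The missing idea (and the paper's route) is to pin down $\uhb$ itself on the strongly active set by means of the \emph{discrete} variational inequality \eqref{varineqdis}: testing with perturbations supported on a single edge shows that $\uhb|_E=u_a$ whenever $I_E=\int_E(\phb+\nu\uhb)\,ds>0$ and $\uhb|_E=u_b$ whenever $I_E<0$. Combine this with the uniform convergence $\phb+\nu\uhb\to\pb+\nu\ub$ in $L^\infty(\G)$ (Theorems~\ref{uhbarconv} and \ref{errorestimates}(iv)) and the uniform continuity of $\pb+\nu\ub$: choosing $h$ so small that $\|(\pb+\nu\ub)-(\phb+\nu\uhb)\|_{L^\infty(\G)}<\tau/4$ and the oscillation of $\pb+\nu\ub$ over any edge is below $\tau/4$, any edge $E$ containing a point $\xi$ with $(\pb+\nu\ub)(\xi)>\tau$ satisfies $\phb+\nu\uhb>\tau/2$ on all of $E$, hence $I_E>0$ and $\uhb|_E=u_a$; simultaneously $\pb+\nu\ub>0$ on $E$ forces $\ub\equiv u_a$ on $E$ via the projection formula, so $R_h^{\ub}\ub|_E=u_a$ as well (for $E\in K_1$ the interpolation point $x_K$ is then a $u_a$-active point). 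Thus $v_h\equiv 0$ on every edge meeting $\{\pb+\nu\ub>\tau\}$, and analogously for $\{\pb+\nu\ub<-\tau\}$; together with the elementary sign conditions this gives $v_h\in C_\tau(\ub)$ for all $h$ below a threshold, after which your perturbation argument closes the proof with $\delta'>0$. (Incidentally, Assumption~\ref{bound_ass} is not needed for this lemma.)
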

\begin{proof}
We proceed similar to the proof of Lemma 4.6. in~\cite{CasasMateosTroeltzsch:2005}. Let us set
\[
  \bar d(x)=\pb(x)+\nu\ub(x)\quad\text{and}\quad \bar d_h(x)=\phb(x)+\nu\uhb(x)
\]
and take $\delta$ and $\tau$ as in Theorem \ref{suffcond}. By means of Theorem \ref{uhbarconv} and Theorem \ref{errorestimates}(iv), we know that $\bar d_h$ converges uniformly to $\bar d$ on $\G$. On that account, there exists a mesh size $h_\tau$ such that
\begin{equation}\label{h_aux1}
  \|\bar d-\bar d_h\|_{L^\infty(\G)}< \frac{\tau}{4}.
\end{equation}
For every $E\in \mathcal E_h$ we define
\[
  I_E=\int\limits_{E}\bar d_h\ds.
\]
Due to the discrete variational inequality \eqref{varineqdis}, we obtain
\[
  \bar u_h|_{E}=\left\{
  \begin{aligned}
  u_a,&\text{ if }I_E>0\\
  u_b,&\text{ if }I_E<0
  \end{aligned}\right..
\]
Now, we take $0<h_1\le h_\tau$ such that $|\bar d(x_1)-\bar d(x_2)|< \frac{\tau}{4}$ if $|x_1-x_2|< h_1$. Due to \eqref{h_aux1}, we derive
\[
  \text{if }\xi\in E\text{ and }\bar d(\xi)>\tau\quad\Rightarrow\quad \bar d_h(x)>\frac{\tau}{2}\quad\forall x\in E,\,\forall E\in\mathcal E_h,\,\forall h<h_1.
\]
Thus, we have $I_E>0$ and therefore $\uhb|_{E}=u_a$. Moreover, the continuous variational inequality \eqref{varineq} implies $\ub(\xi)=u_a$. By definition \eqref{def_Rhu} of the operator $R_h^{\ub}$, we also have $(R_h^{\ub}\ub)(\xi)=u_a$. Then $(\uhb-R_h^{\ub}\ub)(\xi)=0$ whenever $\bar d(\xi)>\tau$. We obtain the analogous result for all $\xi$ with $\bar d(\xi)<-\tau$. Since $u_a\le\uhb(x)\le u_b$, one can easily see $(\uhb-R_h^{\ub}\ub)(x)\ge0$ if $\ub(x)=u_a$ and $(\uhb-R_h^{\ub}\ub)(x)\le0$ if $\ub(x)=u_b$. Thus, we proved that $\uhb-R_h^{\ub}\ub$ belongs to the $\tau$-critical cone $C_\tau(\ub)$ and \eqref{SSC} is applicable:
\begin{equation}
	\label{eq:ssc1}
  J''(\ub)[\uhb-R_h^{\ub}\ub,\uhb-R_h^{\ub}\ub]\ge \delta\lnorm{\uhb-R_h^{\ub}\ub}{\G}^2\quad\forall h<h_1
\end{equation}
Due to the mean value theorem, we obtain for some $0<\theta<1$ and $\hat u=\uhb+\theta(R_h^{\ub}\ub-\uhb)$
\[
  \begin{split}
    (J_h'(\uhb)&-J_h'(R_h^{\ub} \ub))(\uhb-R_h^{\ub}\ub)=J_h''(\hat u)[\uhb-R_h^{\ub}\ub,\uhb-R_h^{\ub}\ub]\\
    &\ge J''(\ub)[\uhb-R_h^{\ub}\ub,\uhb-R_h^{\ub}\ub]-|(J_h''(\hat u)-J_h''(\ub))[\uhb-R_h^{\ub}\ub,\uhb-R_h^{\ub}\ub]|\\
    &-|(J_h''(\ub)-J''(\ub))[\uhb-R_h^{\ub}\ub,\uhb-R_h^{\ub}\ub]|
  \end{split}
\]
By means of~\eqref{eq:ssc1} and the estimates in Lemma \ref{J''estimates} we arrive at
\[
  \begin{split}
    (J_h'(\uhb)&-J_h'(R_h^{\ub} \ub))(\uhb-R_h^{\ub}\ub)\ge \left(\delta-c\lnorm{\hat u-\ub}{\G}-ch^{1/2-\varepsilon}\right)\lnorm{\uhb-R_h^{\ub}\ub}{\G}^2
  \end{split}
\]
Thanks to the uniform convergence of $\uhb$ to $\ub$ and the approximation properties of the operator $R_h^{\ub}$, see Lemma \ref{Rhestimates}, we derive the existence of a mesh size $0<h_0\le h_1$ and $\delta'>0$ such that
\[ 
  \delta'\lnorm{\uhb-R_h^{\ub}\ub}{\G}^2\le(J_h'(\uhb)-J_h'(R_h^{\ub} \ub))(\uhb-R_h^{\ub}\ub)
\]
is valid for all $h<h_0$.
\end{proof}
In the following we introduce the $L^2$-projection into the space of piecewise constant functions on the boundary. Furthermore, we will state some useful properties. It is well-known that the $L^2$-projection $Q_hf$ of a function $f\in L^2(\G)$ into the space $U_h$ satisfies
\[
  Q_hf\equiv\frac{1}{|E|}\int\limits_Ef(x)\ds\quad\quad\text{for every element }E\in\mathcal E_h.
\]
The following approximation property of $Q_h$ is proven in Corollary 4.8 of \cite{ApelPfeffererRoesch:2010}.
\begin{lemma}\label{L2proj-scalar}
 For any element $E\in\mathcal E_h$ and any function $f\in H^1(E)$ and $v\in H^s(E)$, $s\in[0,1]$, the estimate
\[
  \scalar{f-Q_hf}{v}{E}\le c h_E^{s+1}|f|_{H^1(E)}|v|_{H^s(E)}
\]
is valid.
\end{lemma}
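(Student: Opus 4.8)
The plan is to exploit the orthogonality built into the $L^2$-projection onto piecewise constants and to reduce the estimate to scaled Poincar\'e-type inequalities on a single edge. First I would observe that, since $Q_hf|_E=\frac{1}{|E|}\int_Ef\,\mathrm{d}s$ is precisely the $L^2(E)$-orthogonal projection of $f$ onto $\mathcal{P}_0(E)$, one has $\scalar{f-Q_hf}{p}{E}=0$ for every constant $p$ on $E$. Writing $\bar v:=\frac{1}{|E|}\int_Ev\,\mathrm{d}s$ for the mean of $v$ on $E$, this yields
\[
  \scalar{f-Q_hf}{v}{E}=\scalar{f-Q_hf}{v-\bar v}{E}\le\|f-Q_hf\|_{L^2(E)}\,\|v-\bar v\|_{L^2(E)}
\]
by Cauchy--Schwarz, so it suffices to bound the two factors separately.

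For the first factor, the classical Poincar\'e inequality on the interval $E$ (equivalently, a Bramble--Hilbert / Deny--Lions argument on a reference edge followed by affine scaling) gives $\|f-Q_hf\|_{L^2(E)}\le c\,h_E\,|f|_{H^1(E)}$. For the second factor I would establish $\|v-\bar v\|_{L^2(E)}\le c\,h_E^{s}\,|v|_{H^s(E)}$ for all $s\in[0,1]$ by transforming to a fixed reference edge $\hat E$ of unit length. On $\hat E$ the map $T:v\mapsto v-\bar v$ is the complement of an orthogonal projection, hence a contraction on $L^2(\hat E)$, and it satisfies the reference-element Poincar\'e inequality $\|Tv\|_{L^2(\hat E)}\le c\,|v|_{H^1(\hat E)}$. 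Since $T$ annihilates constants, $\|Tv\|_{L^2(\hat E)}=\|T(v-k)\|_{L^2(\hat E)}$ for every $k\in\mathbb{R}$; interpolating the two endpoint bounds and taking the infimum over $k$, together with $\inf_{k\in\mathbb{R}}\|v-k\|_{H^s(\hat E)}\le c\,|v|_{H^s(\hat E)}$ on the bounded interval, yields $\|v-\bar v\|_{L^2(\hat E)}\le c\,|v|_{H^s(\hat E)}$. Tracking the powers of $h_E$ under the affine change of variables, where $\|\cdot\|_{L^2(E)}^2\sim h_E\,\|\cdot\|_{L^2(\hat E)}^2$ and $|\cdot|_{H^s(E)}^2\sim h_E^{1-2s}\,|\cdot|_{H^s(\hat E)}^2$, converts this into the edgewise bound, and multiplying the two factor estimates gives the claim with $h_E$-exponent $s+1$.

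The only genuinely delicate point is the fractional range $0<s<1$: one has to know that the Sobolev--Slobodetskij seminorm still controls the distance of $v$ to the constants uniformly in $s$ and that it scales as stated under dilations of the interval. Both facts are standard consequences of interpolation-space theory on the bounded reference interval --- the endpoint $s=0$ is trivial, since there the seminorm is the full $L^2$-norm and $T$ is already a contraction, while $s=1$ is the ordinary Poincar\'e inequality, so the convexity inequality $\|Tv\|_{L^2(\hat E)}\le c\,\|v\|_{L^2(\hat E)}^{1-s}\,\|v\|_{H^1(\hat E)}^{s}$ combined with the quotient argument above covers everything in between. All remaining steps are routine.
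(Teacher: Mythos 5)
Your argument is correct and is essentially the standard proof of this estimate: the paper itself does not prove the lemma but cites Corollary 4.8 of Apel, Pfefferer and R\"osch (2010), whose proof rests on exactly the same ingredients you use, namely the orthogonality $\scalar{f-Q_hf}{v-\bar v}{E}=\scalar{f-Q_hf}{v}{E}$, Cauchy--Schwarz, and scaled Poincar\'e/quotient-space estimates on a reference edge (with the fractional case handled by interpolation). The only point worth flagging is that your interpolation-space reasoning gives a constant that is uniform for fixed $s$ but whose uniformity over all $s\in[0,1]$ would need the standard (but $s$-sensitive) equivalences between Slobodetskij and interpolation norms; since the paper only invokes the lemma for fixed $s$, this is immaterial.
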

\begin{lemma}\label{yhu-yhrhu}
Suppose that Assumptions \ref{Psetting} and \ref{bound_ass} hold. Then there exists a mesh size $h_0>0$ such that for all $h<h_0$ and mesh parameters $\vec1/2<\vec\mu<\vec1/4+\vec\lambda/2$ the estimate
\[
  \lnorm{y_h(\ub)-y_h(R_h^{\ub}\ub)}{\O}\le ch^2
\]
holds true.
\end{lemma}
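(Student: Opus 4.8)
The plan is to reduce the estimate to a finite element error bound for a linear elliptic equation and to exploit the special structure of the difference $\ub - R_h^{\ub}\ub$. Write $y_h(\ub) - y_h(R_h^{\ub}\ub) = w_h$, which by subtracting the two discrete state equations \eqref{vardisPDE} (and using the mean value theorem on $d(\cdot,\cdot)$ together with the monotonicity from (A\ref{A4})) is characterized by
\[
  a(w_h,v_h) + \into{\tilde\alpha\, w_h v_h} = \intg{(\ub - R_h^{\ub}\ub)\, v_h}\quad\forall v_h\in V_h,
\]
for some nonnegative $\tilde\alpha\in L^\infty(\O)$. First I would compare $w_h$ with the finite element solution $z_h$ of the corresponding linear problem having exact right-hand side data $g = \ub$, whose continuous counterpart $z$ solves $-\Delta z + \tilde\alpha z = 0$ in $\O$, $\partial_n z = \ub$ on $\G$. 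The difference $w_h - z_h$ is then the Galerkin solution driven only by the boundary datum $R_h^{\ub}\ub - \ub$, so by the $L^2(\O)$ stability of the discrete solution operator with respect to a boundary right-hand side one has $\|w_h - z_h\|_{L^2(\O)} \le c\,\|R_h^{\ub}\ub - \ub\|_{H^{-1/2}(\G)}$ or, more crudely, in terms of a duality argument testing against the discrete solution of an adjoint problem.

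The key point — and I expect this to be the main obstacle — is to show that the contribution of $R_h^{\ub}\ub - \ub$ is of order $h^2$ rather than the naive $h$ one would get from $\|R_h^{\ub}\ub - \ub\|_{L^2(\G)}\le ch$ via Lemma~\ref{Rhestimates}(ii). This requires a duality argument: introduce $\varphi\in W^{2,2}_{\vec 1 - \vec\mu}(\O)$ (regularity from Lemma~\ref{pro:reglinearPDE}(ii), using $\vec\mu < \vec\lambda$) solving the adjoint problem with right-hand side $w_h$ (suitably extended), let $\varphi_h$ be its Galerkin approximation, and write
\[
  \|w_h\|_{L^2(\O)}^2 = \intg{(\ub - R_h^{\ub}\ub)\,\varphi_h} = \intg{(\ub - R_h^{\ub}\ub)(\varphi_h - \varphi)} + \intg{(\ub - R_h^{\ub}\ub)\,\varphi}.
\]
For the first term one uses the edgewise bound $\|\varphi - \varphi_h\|_{L^2(\G)}$ together with Lemma~\ref{Rhestimates}(ii) and Theorem~\ref{errorestimates}(i)-type estimates, gaining the needed powers of $h$. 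For the second term one splits the sum over edges into $K_1$ and $K_2$: on each $E\in K_2$ the operator $R_h^{\ub}$ reproduces $\mathcal P_1(E)$ in the integral sense (equation \eqref{eq:deny} and the second identity in the proof of Lemma~\ref{Rhestimates}), so Lemma~\ref{Rhestimates}(i) gives the $h^2$-order bound against $|\varphi|_{W^{2,2}_{2(1-\mu_j)}(E)}$ (here the restriction $\vec 1/4 < \vec\mu$ enters, hence $\vec 1/2 < \vec\mu$ suffices), and the regularity $\pb\in W^{2,\infty}_{\vec\gamma}$ from Theorem~\ref{regadjoint} together with the projection formula \eqref{projform} supplies the required weighted $W^{2,\cdot}$ regularity of $\ub$ on each inactive edge. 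On edges $E\in K_1$ one only has the first-order bound $ch|E|\,|\ub|_{W^{1,\infty}_{1-\mu_j}(E)}$ from Lemma~\ref{Rhestimates}(i), but Assumption~\ref{bound_ass} guarantees $\operatorname{meas}(K_1)\le ch$, so summing $ch|E|$ over $E\in K_1$ and pairing with a bound on $\varphi$ recovers a total contribution of order $h^2$.

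Collecting the estimates, one obtains $\|w_h\|_{L^2(\O)}^2 \le c h^2\,\|w_h\|_{L^2(\O)}$ (the $W^{2,2}_{\vec 1-\vec\mu}$-norm of $\varphi$ being controlled by $\|w_h\|_{L^2(\O)}$ via Lemma~\ref{pro:reglinearPDE}(ii), and the needed embedding of $W^{2,2}_{\vec 1-\vec\mu}(\O)$ into the weighted boundary spaces being available for $\vec\mu$ in the stated range), whence $\|w_h\|_{L^2(\O)}\le ch^2$. The constant $h_0$ appears only to guarantee that $\uhb$ lies in the neighborhood of $\ub$ on which the mean-value linearization has bounded coefficients, as in Theorem~\ref{uhbarconv}. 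The most delicate bookkeeping is making sure the weighted norms of $\ub$ restricted to the inactive edges are finite uniformly — this is exactly where the refined adjoint regularity of Theorem~\ref{regadjoint} and the identification $\ub = -\tfrac1\nu\pb$ on inactive sets are used — and verifying that the grading range $\vec1/2 < \vec\mu < \vec1/4 + \vec\lambda/2$ is simultaneously compatible with the $W^{2,2}_{\vec1-\vec\mu}$ regularity of $\varphi$ and with the hypotheses of Lemma~\ref{Rhestimates}.
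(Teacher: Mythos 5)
Your overall strategy coincides with the paper's: linearize the difference of the two discrete semilinear equations into a discrete linear problem with boundary datum $\ub-R_h^{\ub}\ub$, introduce the dual problem with right-hand side $y_h(\ub)-y_h(R_h^{\ub}\ub)$ together with its Galerkin approximation, split $\intg{(\ub-R_h^{\ub}\ub)\phi_h}$ into a part with $\phi_h-\phi$ and a part with $\phi$, and treat the latter edgewise on $K_1$ and $K_2$, using Assumption~\ref{bound_ass} on $K_1$ and the weighted regularity of $\ub$ inherited from $\pb$ via \eqref{projform} and Theorem~\ref{regadjoint} on $K_2$. The first part and the $K_1$ contribution are sound as you sketch them.

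The gap is in the $K_2$ contribution of $\intg{(\ub-R_h^{\ub}\ub)\phi}$. Lemma~\ref{Rhestimates}(i) only estimates the plain integrals $\int_E(f-R_h^{\ub}f)\ds$, i.e.\ the pairing of the interpolation error with a \emph{constant} test function; it says nothing about $\int_E(\ub-R_h^{\ub}\ub)\phi\ds$ when $\phi$ varies on $E$, because the cancellation $\int_E(p-R_h^{\ub}p)\ds=0$ for $p\in\mathcal P_1(E)$, $E\in K_2$, does not survive multiplication by $\phi$. Moreover the regularity you invoke for that bound, $|\phi|_{W^{2,2}_{2(1-\mu_j)}(E)}$, is not available: the dual right-hand side is only in $L^2(\O)$, so $\phi\in W^{2,2}_{\vec\beta}(\O)$ and on the boundary one controls only $H^1(\G)\cap L^\infty(\G)$-type quantities; the weighted second-derivative norms appearing in the final estimate are those of $\ub$, not of $\phi$. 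The missing device --- and exactly how the paper closes this step --- is to insert the $L^2$-projection $Q_h\ub$: the term $\intg{(\ub-Q_h\ub)\phi}$ is second order by the orthogonality of $Q_h$ (Lemma~\ref{L2proj-scalar}, giving $ch^2|\ub|_{H^1(\G)}|\phi|_{H^1(\G)}$ with $|\phi|_{H^1(\G)}\le c\lnorm{y_h(\ub)-y_h(R_h^{\ub}\ub)}{\O}$), while $\intg{(Q_h\ub-R_h^{\ub}\ub)\phi}$ is bounded by $\|Q_h\ub-R_h^{\ub}\ub\|_{L^1(\G)}\|\phi\|_{L^\infty(\G)}$; since $Q_h\ub-R_h^{\ub}\ub$ is piecewise constant, its $L^1$-norm equals $\sum_E\left|\int_E(\ub-R_h^{\ub}\ub)\ds\right|$, and only here do Lemma~\ref{Rhestimates}(i), the $K_1$/$K_2$ splitting, Assumption~\ref{bound_ass} and the weighted regularity of $\ub$ enter. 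Equivalently, you could subtract the edge averages of $\phi$ before applying Lemma~\ref{Rhestimates}(i); without some such step the claimed $h^2$ bound on $K_2$ does not follow from the lemmas you cite.
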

\begin{proof}
First note, that $y_h(\ub)$ and $y_h(R_h^{\ub}\ub)$ are the solutions of the discrete state equation \eqref{vardisPDE} with respect to the right hand sides $\ub$ and $R_h^{\ub}\ub$, respectively. Initially we introduce a dual problem and its discrete counterpart following the ideas of \cite[Appendix]{CasasMateos:2008}: let $\phi\in H^1(\O)$ be the unique solution of
\[
  a(\phi,v)+\into{\alpha\phi v}=\into{(y_h(\ub)-y_h(R_h^{\ub}\ub))v}\quad\quad\forall v\in H^1(\O),
\]
with
\[
  \alpha(x)=\left\{
  \begin{aligned}
  \frac{d(x,y_h(\ub)(x))-d(x,y_h(R_h^{\ub}\ub)(x))}{y_h(\ub)(x)-y_h(R_h^{\ub}\ub)(x)},&\quad\text{if }y_h(\ub)(x)-y_h(R_h^{\ub}\ub)(x)\not=0\\
  0,&\quad\text{otherwise}
  \end{aligned}
  \right..
\]
Due to the approximation properties of $R_h^{\bar u}$ and the monotonicity of $d$ according to Assumption~\ref{Psetting} one can easily check that there is a $h_0$ such that the problem is well-posed for all $h<h_0$. The corresponding discrete counterpart $\phi_h\in V_h$ is the unique solution of the problem
\[
  a(\phi_h,v_h)+\into{\alpha\phi_h v_h}=\into{(y_h(\ub)-y_h(R_h^{\ub}\ub))v_h}\quad\forall v_h\in V_h.
\]
By means of $y_h(\ub), y_h(R_h^{\ub}\ub)\in V_h$ as solutions of \eqref{vardisPDE} and the definition of $\alpha$, we derive
\begin{align}
  \lnorm{y_h(\ub)-y_h&(R_h^{\ub}\ub)}{\O}^2=a(\phi_h, y_h(\ub)-y_h(R_h^{\ub}\ub))+\into{\alpha\phi_h(y_h(\ub)- y_h(R_h^{\ub}\ub))}\notag\\
  &=a(y_h(\ub)-y_h(R_h^{\ub}\ub),\phi_h)+\into{(d(x,y_h(\ub))-d(x,y_h(R_h^{\ub}\ub))\phi_h}\notag\\
  &=\intg{(\ub-R_h^{\ub}\ub)\phi_h}.\label{eq:u-rhu01}
\end{align}
Next,  we split the last term in two terms and estimate them separately:
\begin{equation}\label{u-rhu1}
 \scalar{\ub-R_h^{\ub}\ub}{\phi_h}{\G}= \scalar{\ub-R_h^{\ub}\ub}{\phi_h-\phi}{\G}+ \scalar{\ub-R_h^{\ub}\ub}{\phi}{\G}.
\end{equation}
For the first term, we derive
\begin{equation}\label{u-rhu2}
 \begin{aligned}
   \scalar{\ub-R_h^{\ub}\ub}{\phi_h-\phi}{\G}&\le c\lnorm{\ub-R_h^{\ub}\ub}{\G}\|\phi_h-\phi\|_{H^1(\O)}\\
  &\le ch^2|\ub|_{H^1(\G)}\lnorm{y_h(\ub)-y_h(R_h^{\ub}\ub)}{\O}
 \end{aligned}
\end{equation}
using the Cauchy-Schwarz inequality, a standard trace theorem, Lemma~\ref{Rhestimates} and Lemma \ref{feerrorgenPDE}. The second term in \eqref{u-rhu1} is again split into two terms:
\[
  \scalar{\ub-R_h^{\ub}\ub}{\phi}{\G}= \scalar{\ub-Q_h\ub}{\phi}{\G}+\scalar{Q_h\ub-R_h^{\ub}\ub}{\phi}{\G}.
\]
According to Lemma~\ref{pro:reglinearPDE} the solution $\phi$ of the previously introduced dual problem belongs to $\WbO{2}$ for $\vec\beta$ satisfying~\eqref{condbeta} since $y_h(\ub)-y_h(R_h^{\ub}\ub)\in L^2(\O)\hookrightarrow W^{0,2}_{\vec \beta}(\O)$. Furthermore, the a priori estimate
\begin{equation}
\label{eq:aprioriphi}
\|\phi\|_{W^{2,2}_{\vec\beta}(\O)}\leq c\|y_h(\ub)-y_h(R_h^{\ub}\ub)\|_{L^2(\Omega)}
\end{equation}
is valid. Thus, Lemma~\ref{L2proj-scalar}, the trace theorem and embeddings in classical and weighted Sobolev spaces yield
\begin{equation}\label{u-rhu3}
  \begin{aligned}
 \scalar{\ub-Q_h\ub}{\phi}{\G}&\le ch^2|\ub|_{H^1(\G)}|\phi|_{H^1(\G)}\\
  &\le ch^2|\ub|_{H^1(\G)}\lnorm{y_h(\ub)-y_h(R_h^{\ub}\ub)}{\O},
  \end{aligned}
\end{equation}
cf. the proof of Theorem~\ref{firstorder} or the proof of Lemma 7.4. in~\cite{ApelPfeffererRoesch:2012} for details. We proceed with
\[
  \begin{aligned}
  \scalar{Q_h\ub-R_h^{\ub}\ub}{\phi}{\G}&\le \|Q_h\ub-R_h^{\ub}\ub\|_{L^1(\G)}\|\phi\|_{L^\infty(\G)}\\
  &\le c  \|Q_h\ub-R_h^{\ub}\ub\|_{L^1(\G)}\lnorm{y_h(\ub)-y_h(R_h^{\ub}\ub)}{\O}
  \end{aligned}
\]
applying again embeddings in classical and weighted Sobolev spaces and~\eqref{eq:aprioriphi}. Since $R_h^{\ub}\ub$ is constant on every element $E$, we derive
\begin{align*}
	\|Q_h\bar u &-R_h^{\ub} \bar u\|_{L^1(\Gamma)}=\|Q_h\left(\bar u -R_h^{\ub} \bar u\right)\|_{L^1(\Gamma)}= \sum_{E\in \mathcal{E}_h}\left|\int_E \left(\bar u - R_h^{\ub} \bar u \right)ds\right|\\
	&=\sum_{j=0}^m\left(\sum_{E\in\mathcal{E}_{h,j}\cap{K_1}}\left|\int_E \left(\bar u - R_h^{\ub} \bar u \right)ds\right|+\sum_{E\in\mathcal{E}_{h,j}\cap {K_2}}\left|\int_E \left(\bar u - R_h^{\ub} \bar u \right)ds\right|\right).
\end{align*}
By means of Lemma \ref{Rhestimates} we obtain
\begin{align}
\|Q_h\ub-R_h^{\ub}\ub&\|_{L^1(\G)}\leq c\left(\sum_{E\in\mathcal{E}_{h,0}\cap {K_1}}h |E||
	  \bar u|_{W^{1,\infty}(E)}+\sum_{j=1}^m\sum_{E\in\mathcal{E}_{h,j}\cap {K_1}}h |E||\bar u|_{W^{1,\infty}_{1-\mu_j}(E)}\right.\notag\\
	  &+\left.\sum_{E\in\mathcal{E}_{h,0}\cap {K_2}}h^2 |E|^{1/2}|
	  \bar u|_{W^{2,2}(E)}+\sum_{j=1}^m\sum_{E\in\mathcal{E}_{h,j}\cap {K_2}}h^2 |E|^{1/2}|\bar u|_{W^{2,2}_{2(1-\mu_j)}(E)}\right)\notag\\
	  &\leq ch |K_1| \left(|\bar u|_{W^{1,\infty}(K_1\cap\Gamma^0)}+\sum_{j=1}^m|\bar u|_{W^{1,\infty}_{1-\mu_j}(K_1\cap\Gamma_j^\pm)}\right)\nonumber\\
	  &+ch^2 |K_2|^{1/2}\left( |\bar u|_{W^{2,2}(K_2\cap\Gamma^0)}+\sum_{j=1}^m|\bar u|_{W^{2,2}_{2(1-\mu_j)}(K_2\cap\Gamma^\pm_j)}\right)\nonumber\\
	  &\leq ch^2\left(|\bar u |_{W^{1,\infty}_{\vec 1-\vec \mu}(K_1)}+|\bar  u|_{W^{2,2}_{2(\vec 1 -\vec \mu)}(K_2)}\right),\label{qhu-rhu}
\end{align}
where we applied the discrete Cauchy-Schwarz inequality and Assumption \ref{bound_ass}. Hence, we end up with
\begin{equation}\label{u-rhu4}
  \scalar{Q_h\ub-R_h^{\ub}\ub}{\phi}{\G}\le ch^2(|\ub|_{W^{1,\infty}_{1-\mu}(K_1)}+|\ub|_{W^{2,2}_{2(1-\mu)}(K_2)})\lnorm{y_h(\ub)-y_h(R_h^{\ub}\ub)}{\O}.
\end{equation}
Collecting the intermediate estimates from \eqref{eq:u-rhu01}-\eqref{u-rhu4}, we derive
\begin{equation*}
  \lnorm{y_h(\ub)-y_h(R_h^{\ub}\ub)}{\O}\le ch^2(|\ub|_{H^1(\G)}+|\ub|_{W^{1,\infty}_{1-\mu}(K_1)}+|\ub|_{W^{2,2}_{2(1-\mu)}(K_2)})
\end{equation*}
According to Theorem \ref{firstorder} the optimal control $\ub$ is bounded in the space $H^1(\G)$. Taking into account that the optimal control $\ub$ is given by the projection formula~\eqref{projform}, we can split the boundary $\Gamma$ in an ``active'' part $\mathcal A$ ($\bar u=u_a$ or $\bar u=u_b$) and an ``inactive part'' $\mathcal I$ ($\bar u =-\bar p/\nu$). Hence, we can estimate
	\begin{align*}
	  |\bar u |_{W^{1,\infty}_{\vec 1-\vec \mu}(K_1)}+|\bar  u|_{W^{2,2}_{2(\vec 1 -\vec \mu)}(K_2)}=
	  |\bar p |_{W^{1,\infty}_{\vec 1-\vec \mu}(K_1\cap\mathcal{I})}+|\bar  p|_{W^{2,2}_{2(\vec 1 -\vec \mu)}(K_2\cap\mathcal{I})}&\leq c
	\end{align*}
	by using Theorem~\ref{regadjoint} with $\vec 1/2<\vec\mu<\vec 1/4+\vec\lambda/2$. This ends the proof.
\end{proof}
We will continue with a supercloseness result for $\lnorm{\uhb-R_h^{\ub}\ub}{\G}$.
\begin{lemma}\label{lem_aux_uh-rhu}
Let Assumption \ref{Psetting} and \ref{bound_ass} be satisfied. Then there exists a mesh size $h_0>0$ such that for all $h<h_0$ the estimate
\begin{equation*}
  \lnorm{\uhb-R_h^{\ub}\ub}{\G}\le ch^{3/2}
\end{equation*}
is valid for mesh grading parameters $\vec1/2<\vec\mu<\vec 1/4+\vec\lambda/2$.
\end{lemma}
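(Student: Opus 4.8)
The plan is to combine the coercivity estimate of Lemma~\ref{diff_jh_estimate} with the two variational inequalities \eqref{varineq}, \eqref{varineqdis}, using the structure of the optimal control and the sharp boundary estimate of Lemma~\ref{feerrorgenPDE}(ii). Abbreviate $w:=\uhb-R_h^{\ub}\ub\in U_h$ and let $\delta',h_0$ be as in Lemma~\ref{diff_jh_estimate}, so that $\delta'\lnorm{w}{\G}^2\le(J_h'(\uhb)-J_h'(R_h^{\ub}\ub))w$ for $h<h_0$. Since $R_h^{\ub}\ub\in U_{ad,h}$, the discrete variational inequality \eqref{varineqdis} gives $J_h'(\uhb)w\le0$, hence $\delta'\lnorm{w}{\G}^2\le -J_h'(R_h^{\ub}\ub)w$. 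I would then insert $J'(\ub)$, write $-J_h'(R_h^{\ub}\ub)w=(J'(\ub)-J_h'(R_h^{\ub}\ub))w-J'(\ub)w$, and use the continuous variational inequality \eqref{varineq} in the form $J'(\ub)(\uhb-\ub)\ge0$ (note $\uhb\in U_{ad,h}\subset U_{ad}$) together with $-J'(\ub)w=-J'(\ub)(\uhb-\ub)-J'(\ub)(\ub-R_h^{\ub}\ub)$ to bound $-J'(\ub)w\le\scalar{\pb+\nu\ub}{R_h^{\ub}\ub-\ub}{\G}$.

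For the term $(J'(\ub)-J_h'(R_h^{\ub}\ub))w$ I would use the explicit representations of the reduced gradients to obtain $(J'(\ub)-J_h'(R_h^{\ub}\ub))w=\scalar{\pb-p_h(y_h(R_h^{\ub}\ub))}{w}{\G}+\nu\scalar{\ub-R_h^{\ub}\ub}{w}{\G}$. In the first scalar product, insert the intermediate quantities $p_h(\yb)$ and $p_h(y_h(\ub))$ and estimate the three contributions by Lemma~\ref{feerrorgenPDE}(ii) applied to the adjoint equation (legitimate since $\yb-y_d$ and $d_y(\cdot,\yb)$ lie in $C^{0,\epsilon}(\bar\O)$ by Theorem~\ref{firstorder} and Assumption~\ref{Psetting}, and $\vec1/2<\vec\mu<\vec1/4+\vec\lambda/2$ implies the grading condition of Lemma~\ref{feerrorgenPDE}(ii)), by Theorem~\ref{errorestimates}(i),(iii), and by Lemma~\ref{yhu-yhrhu}; this yields $\lnorm{\pb-p_h(y_h(R_h^{\ub}\ub))}{\G}\le ch^2|\ln h|^{3/2}$. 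For the second scalar product, since $w\in U_h$ and $Q_h$ is the $L^2(\G)$-projection onto $U_h$, one has $\scalar{\ub-R_h^{\ub}\ub}{w}{\G}=\scalar{Q_h\ub-R_h^{\ub}\ub}{w}{\G}$, and I would prove $\lnorm{Q_h\ub-R_h^{\ub}\ub}{\G}\le ch^{3/2}$ by repeating the element-wise computation leading to \eqref{qhu-rhu}, now in the $L^2$- instead of the $L^1$-norm, using Lemma~\ref{Rhestimates}(i), Assumption~\ref{bound_ass} and the regularity $\ub\in W^{1,\infty}_{\vec1-\vec\mu}(K_1)\cap W^{2,2}_{2(\vec1-\vec\mu)}(K_2)$, which follows from the projection formula \eqref{projform} and Theorem~\ref{regadjoint} under the present restriction on $\vec\mu$. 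Altogether $|(J'(\ub)-J_h'(R_h^{\ub}\ub))w|\le ch^{3/2}\lnorm{w}{\G}$.

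For the remaining term $\scalar{\pb+\nu\ub}{R_h^{\ub}\ub-\ub}{\G}$ I would exploit the structure of $\ub$: by \eqref{projform} the residual $\bar d:=\pb+\nu\ub$ vanishes on the inactive set, whereas on edges that contain only active points $\ub$ is constant so that $R_h^{\ub}\ub=\ub$ there; hence only edges in $K_1$ contribute. On each such edge $\bar d$ vanishes at the active/inactive transition point and, by Theorem~\ref{regadjoint}, is Lipschitz up to the corner weight, so $\bar d$ and $\ub-R_h^{\ub}\ub$ are both of size $\mathcal O(h)$ there; combined with Assumption~\ref{bound_ass} this gives $|\scalar{\pb+\nu\ub}{R_h^{\ub}\ub-\ub}{\G}|\le ch^3$. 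Collecting the estimates we arrive at $\delta'\lnorm{w}{\G}^2\le ch^{3/2}\lnorm{w}{\G}+ch^3$, and Young's inequality yields $\lnorm{w}{\G}\le ch^{3/2}$ after possibly shrinking $h_0$. I expect the main obstacle to be the bookkeeping near the corners and near the active/inactive transition: the corner weights $r_j^{1-\mu_j}$ entering the graded interpolation estimates of Lemma~\ref{Rhestimates} and the weights $r_j^{-\beta_j}$ from the regularity of $\pb$ in Theorem~\ref{regadjoint} must be kept simultaneously compatible with $\vec1/2<\vec\mu<\vec1/4+\vec\lambda/2$ so that both $\lnorm{Q_h\ub-R_h^{\ub}\ub}{\G}\le ch^{3/2}$ and the $K_1$-estimate $|\scalar{\bar d}{\ub-R_h^{\ub}\ub}{\G}|\le ch^3$ actually hold; the role of the new boundary estimate Lemma~\ref{feerrorgenPDE}(ii) is only to push the adjoint boundary error down to $h^2|\ln h|^{3/2}$, comfortably below the target rate $h^{3/2}$.
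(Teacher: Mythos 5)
Your argument is correct and leads to the stated rate, but it deviates from the paper's proof in how the variational inequalities are exploited. The paper applies the \emph{pointwise} form of \eqref{varineq} at the interpolation nodes ($S_E$ for $E\in K_2$, $x_K$ for $E\in K_1$), integrates over each edge and sums; this yields $\scalar{R_h^{\ub}\pb+\nu R_h^{\ub}\ub}{\uhb-R_h^{\ub}\ub}{\G}\ge0$ and, added to the discrete inequality, eliminates from the outset the consistency term $\scalar{\pb+\nu\ub}{R_h^{\ub}\ub-\ub}{\G}$ that you must estimate separately. After invoking Lemma \ref{diff_jh_estimate} the paper is then left with $\scalar{R_h^{\ub}\pb-\pb}{\uhb-R_h^{\ub}\ub}{\G}$, estimated edge-wise with Lemma \ref{Rhestimates} applied to $\pb$ (using that $\uhb-R_h^{\ub}\ub$ is constant per edge), plus the same adjoint term $\scalar{\pb-p_h(y_h(R_h^{\ub}\ub))}{\uhb-R_h^{\ub}\ub}{\G}$ you treat via Lemma \ref{feerrorgenPDE}(ii), Theorem \ref{errorestimates} and Lemma \ref{yhu-yhrhu}. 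You instead keep both inequalities in integrated form, which buys a cleaner functional-analytic skeleton ($\delta'\lnorm{w}{\G}^2\le -J_h'(R_h^{\ub}\ub)w$, then insert $J'(\ub)$) at the price of two extra pieces: the $L^2$-bound $\lnorm{Q_h\ub-R_h^{\ub}\ub}{\G}\le ch^{3/2}$, which indeed follows by redoing \eqref{qhu-rhu} edge-wise in $L^2$ (the $K_1$ part gives $h\,|K_1|^{1/2}\sim h^{3/2}$, the $K_2$ part $h^2$), and the bound $|\scalar{\pb+\nu\ub}{R_h^{\ub}\ub-\ub}{\G}|\le ch^3$, which is sound because only $K_1$-edges contribute ($\bar d=0$ on inactive points, $R_h^{\ub}\ub=\ub$ on purely active edges by continuity of $\ub$), on each such edge $\bar d$ vanishes at an inactive point and both factors are $O(|E|)$ up to the corner weights from Theorem \ref{regadjoint}, and $\text{meas}(K_1)\le ch$; the weight bookkeeping closes under $\vec 1/2<\vec\mu<\vec1/4+\vec\lambda/2$ by choosing $\beta_j$ close to $\max(0,3/4-\lambda_j/2)$, exactly as in the paper's own $K_1$ estimates. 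In both routes the $h^{3/2}$ barrier comes from the $K_1$-edges (the non-midpoint interpolation there is only first-order exact), and both rest on the same lemmas (Lemma \ref{diff_jh_estimate}, Lemma \ref{Rhestimates}, Lemma \ref{feerrorgenPDE}(ii), Lemma \ref{yhu-yhrhu}, Assumption \ref{bound_ass}); your version replaces the paper's pointwise-VI trick by an explicit smallness argument for the continuous residual, which is slightly longer but makes the role of the active-set structure more transparent.
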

\begin{proof}
We start with the pointwise a.e. version of the variational inequality \eqref{varineq}:
\[
  (\pb(x)+\nu\ub(x))\cdot(u-\ub(x))\ge0\quad\quad \forall u\in[u_a,u_b].
\]
We apply this formula for $x=S_E$, $E\in K_2$ and $u=\uhb(S_E)$ and arrive at
\[
  (\pb(S_E)+\nu\ub(S_E))\cdot(\uhb(S_E)-\ub(S_E))\ge0\quad\quad \forall S_E,\,E\in K_2.
\]
Analogously, we can apply this formula to elements $E$ of the subset $K_1$ using $x_K$ instead of the midpoint $S_E$, where $x_K\in E$ is a point satisfying either $\ub(x_K)=u_a$ or $\ub(x_K)=u_b$. Integrating these formulas over $E$, summing up over all $E\in \mathcal E_h$ and taking into account the definition of $R^{\ub}_h$ in \eqref{def_Rhu}, we find
\[
  \scalar{R_h^{\ub}\pb+\nu R_h^{\ub}\ub}{\uhb-R_h^{\ub}\ub}{\G}\ge 0.
\]
Next we test the discrete variational inequality \eqref{varineqdis} with the function $R_h^{\ub}\ub\in U_{ad,h}$ and get
\[
 \scalar{\phb+\nu \uhb}{R_h^{\ub}\ub-\uhb}{\G}\ge 0.
\]
Adding the last two inequalities and inserting appropriate intermediate functions yields
\begin{equation}\label{uh-rhu0}
  \begin{aligned}
  0&\le \scalar{R_h^{\ub}\pb-\bar p_h+\nu (R_h^{\ub}\ub-\bar u_h)}{\uhb-R_h^{\ub}\ub}{\G}\\
  &=\scalar{R_h^{\ub}\pb-\pb}{\uhb-R_h^{\ub}\ub}{\G}+\scalar{\pb-p_h(y_h(R_h^{\ub}\ub))}{\uhb-R_h^{\ub}\ub}{\G}\\
  &+\scalar{p_h(y_h(R_h^{\ub}\ub))-\phb+\nu (R_h^{\ub}\ub-\uhb)}{\uhb-R_h^{\ub}\ub}{\G}
  \end{aligned}
\end{equation}
Note, that $p_h(y_h(R_h^{\ub}\ub))$ denotes the solution of the discrete adjoint equation \eqref{disadjoint} w.r.t. the state $y_h(R_h^{\ub}\ub)$. The last term in the previous estimate can be formulated as
\[
  (J_h'(R_h^{\ub}\ub)-J_h'(\uhb))(\uhb-R_h^{\ub}\ub)
\]
such that Lemma \ref{diff_jh_estimate} can be applied and we obtain that there is a $h_1>0$ such that
\begin{equation}\label{uh-rhu1}
  \begin{aligned}
 \delta'\lnorm{\uhb-R_h^{\ub}\ub}{\G}^2&\le \scalar{R_h^{\ub}\pb-\pb}{\uhb-R_h^{\ub}\ub}{\G}+\scalar{\pb-p_h(y_h(R_h^{\ub}\ub))}{\uhb-R_h^{\ub}\ub}{\G}
  \end{aligned}
\end{equation}
for all $h<h_1$. The first term can be written as
\[
 \begin{aligned}
    \scalar{R_h^{\ub}\pb-\pb}{\uhb-R_h^{\ub}\ub}{\G}&=\sum\limits_{E\in  K_1}(\uhb-R_h^{\ub}\ub)|_E\int_E(R_h^{\ub}\pb-\pb) \ds\\
    &+\sum\limits_{E\in  K_2}(\uhb-R_h^{\ub}\ub)|_E\int_E(R_h^{\ub}\pb-\pb) \ds.
 \end{aligned}
\]
Adapting the estimates of \eqref{qhu-rhu} and using the formula 
\[
 \lnorm{\uhb-R_h^{\ub}\ub}{E}=|E|^{1/2}|(\uhb-R_h^{\ub}\ub)|_E|,
\]
we obtain
\[
 \begin{aligned}
    \scalar{R_h^{\ub}\pb-\pb}{\uhb-R_h^{\ub}\ub}{\G}&\le ch|K_1|^{1/2} \lnorm{\uhb-R_h^{\ub}\ub}{K_1}|\pb|_{W^{1,\infty}_{\vec1-\vec\mu}(K_1)}\\
  &+ch^2\lnorm{\uhb-R_h^{\ub}\ub}{K_2}|\pb|_{W^{2,2}_{2(\vec1-\vec\mu)}(K_2)}.
  \end{aligned}
\]
Due to Assumption \ref{bound_ass} and Theorem~\ref{regadjoint} we derive for $\vec 1/2<\vec \mu<\vec 1/4+\vec\lambda/2$
\[
  \scalar{R_h^{\ub}\pb-\pb}{\uhb-R_h^{\ub}\ub}{\G}\le ch^{3/2}\lnorm{\uhb-R_h^{\ub}\ub}{\G}
\]
For the second term in \eqref{uh-rhu1} we first find
\[
 \scalar{\pb-p_h(y_h(R_h^{\ub}\ub))}{\uhb-R_h^{\ub}\ub}{\G}\le \lnorm{\pb-p_h(y_h(R_h^{\ub}\ub))}{\G}\lnorm{\uhb-R_h^{\ub}\ub}{\G}
\]
such that we continue by applying the triangle inequality
\begin{equation}\label{pu-phrhu}
 \begin{aligned}
 \lnorm{\pb-p_h(y_h(R_h^{\ub}\ub))}{\G}&\le \lnorm{\pb-p_h(\yb)}{\G}+\lnorm{p_h(\yb)-p_h(y_h(\ub))}{\G}\\
  & +\lnorm{p_h(y_h(\ub))-p_h(y_h(R_h^{\ub}\ub))}{\G},
 \end{aligned}
\end{equation}
where $p_h(\yb)$ and $p_h(y_h(\ub))$ denote the solution of the discrete adjoint state equation \eqref{disadjoint} w.r.t. the states $\yb$ and $y_h(\ub)$, respectively.
Thus the first error on the right side in \eqref{pu-phrhu} is a finite element error on the boundary for the adjoint states and we apply Lemma \ref{feerrorgenPDE}(ii) and Theorem~\ref{regadjoint} such that
\[
   \lnorm{\pb-p_h(\yb)}{\G}\le ch^2|\ln h|^{3/2}
\]
for $\vec 1/4<\vec\mu<\vec1/4+\vec \lambda/2$. For the second and the third term in \eqref{pu-phrhu} one can prove by a standard trace theorem, Theorem~\ref{errorestimates}(iii) and (i), Theorem~\ref{firstorder}, and Lemma~\ref{yhu-yhrhu} with $h<h_0\leq h_1$ that
\[
 \begin{split}
 \lnorm{p_h(\yb)-p_h(y_h(\ub))}{\G}+&\lnorm{p_h(y_h(\ub))-p_h(y_h(R_h^{\ub}\ub))}{\G}\\
  &\le c(\lnorm{\yb-y_h(\ub)}{\O}+\lnorm{y_h(\ub)-y_h(R_h^{\ub}\ub)}{\O})\\
  &\le c h^2
  \end{split}
\]
provided that $\vec 1/2<\vec\mu<\vec1/4+\vec \lambda/2$. Summarizing the previous estimates yields the assertion.
\end{proof}
We announced in the introduction of this paper, that we want to carry over the results for linear quadratic problems to optimal control problems governed by semilinear equations. Unfortunately, the supercloseness result derived in the previous lemma cannot be improved. The reason is that on the set $K_1$ the integration formula
\[
  \int_E(R_h^{\ub}f-f)\ds=0,\quad\quad E\in K_1
\]
induced by our modified interpolator $R_h^{\ub}$ is in general only exact for constant polynomials $f$ on the element $E$, since the interpolation point is in general not the midpoint $S_E$ of the element $E$. This is different to the linear quadratic case considered in \cite{ApelPfeffererRoesch:2012}. However, if we restrict to the set $K_2$, we can improve the estimate.
\begin{lemma}\label{lem_aux_uh-rhu_K2}
Let Assumption~\ref{Psetting} and Assumption \ref{bound_ass} be satisfied. Then there exists a mesh size $h_0>0$ such that for all $h<h_0$ the estimate
\begin{equation*}
  \lnorm{\uhb-R_h^{\ub}\ub}{K_2}\le ch^2|\ln h|^{3/2}
\end{equation*}
is valid for mesh grading parameters $\vec1/2<\vec\mu<\vec1/4+\vec\lambda/2$. 
\end{lemma}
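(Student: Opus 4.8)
The plan is to revisit the chain of inequalities \eqref{uh-rhu0}--\eqref{uh-rhu1} from the proof of Lemma~\ref{lem_aux_uh-rhu}, but now localized to the set $K_2$ and tested more carefully. The starting point is again the pointwise variational inequality \eqref{varineq} evaluated at the midpoints $S_E$ for $E\in K_2$, combined with the discrete variational inequality \eqref{varineqdis} tested against $R_h^{\ub}\ub$; adding them and localizing to $K_2$ gives a bound of $\delta'\lnorm{\uhb-R_h^{\ub}\ub}{K_2}^2$ by the two scalar products
\[
  \scalar{R_h^{\ub}\pb-\pb}{\uhb-R_h^{\ub}\ub}{K_2}\quad\text{and}\quad \scalar{\pb-p_h(y_h(R_h^{\ub}\ub))}{\uhb-R_h^{\ub}\ub}{K_2},
\]
up to the contribution of $K_1$, which must be controlled separately using Lemma~\ref{diff_jh_estimate}, the supercloseness estimate $\lnorm{\uhb-R_h^{\ub}\ub}{\G}\le ch^{3/2}$ from Lemma~\ref{lem_aux_uh-rhu}, and Assumption~\ref{bound_ass}. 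Concretely, since $|K_1|\le ch$, any term of the form $\lnorm{\uhb-R_h^{\ub}\ub}{K_1}\cdot(\text{something of order }h^{3/2})$ can be absorbed, so the $K_1$-part is not the obstacle.

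\textbf{Key steps.} First I would treat the term $\scalar{R_h^{\ub}\pb-\pb}{\uhb-R_h^{\ub}\ub}{K_2}$: here the crucial point is that on $K_2$ the interpolation point is the midpoint $S_E$, so the exactness $\int_E(R_h^{\ub}\pb-\pb)\ds=0$ holds for \emph{linear} polynomials $\pb$, which yields the second-order local estimate of Lemma~\ref{Rhestimates}(i), namely $|\int_E(\pb-R_h^{\ub}\pb)\ds|\le ch^2|E|^{1/2}|\pb|_{W^{2,2}_{2(1-\mu_j)}(E)}$. Summing over $E\in K_2$ with the discrete Cauchy--Schwarz inequality and the identity $\lnorm{\uhb-R_h^{\ub}\ub}{E}=|E|^{1/2}|(\uhb-R_h^{\ub}\ub)|_E|$, and then invoking the boundary regularity $\pb_{|\G}\in W^{2,2}_{2\vec\beta}(\G)$ from Theorem~\ref{regadjoint} (with $\vec1/2<\vec\mu<\vec1/4+\vec\lambda/2$ so that $2(\vec1-\vec\mu)$ fits the admissible weight), gives a bound of order $h^2\lnorm{\uhb-R_h^{\ub}\ub}{K_2}$, which is absorbable. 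Second, for $\scalar{\pb-p_h(y_h(R_h^{\ub}\ub))}{\uhb-R_h^{\ub}\ub}{K_2}$ I would use $\lnorm{\pb-p_h(y_h(R_h^{\ub}\ub))}{\G}\lnorm{\uhb-R_h^{\ub}\ub}{K_2}$ and split $\pb-p_h(y_h(R_h^{\ub}\ub))$ exactly as in \eqref{pu-phrhu}: the leading term $\lnorm{\pb-p_h(\yb)}{\G}$ is the finite element boundary error, estimated by $ch^2|\ln h|^{3/2}$ via Lemma~\ref{feerrorgenPDE}(ii) and Theorem~\ref{regadjoint}; the remaining two terms are bounded by $c(\lnorm{\yb-y_h(\ub)}{\O}+\lnorm{y_h(\ub)-y_h(R_h^{\ub}\ub)}{\O})\le ch^2$ using Theorem~\ref{errorestimates}(iii),(i), Theorem~\ref{firstorder}, and Lemma~\ref{yhu-yhrhu}. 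Finally, dividing through by $\lnorm{\uhb-R_h^{\ub}\ub}{K_2}$ and collecting the $h^2|\ln h|^{3/2}$ contributions yields the claim.

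\textbf{Main obstacle.} The delicate point is the bookkeeping at the interface between $K_1$ and $K_2$. When I localize the variational-inequality argument to $K_2$, the inequality from $\uhb$ and the one from $R_h^{\ub}\ub$ no longer cancel cleanly because the cross terms involve $(\uhb-R_h^{\ub}\ub)|_{K_1}$; I must show that those leftover $K_1$-contributions are of order $h^2|\ln h|^{3/2}$ or better. The natural way is to pair $\lnorm{\uhb-R_h^{\ub}\ub}{K_1}\le ch^{3/2}$ (from Lemma~\ref{lem_aux_uh-rhu}) against a factor that is at least $O(h^{1/2}|\ln h|^{3/2})$ — which is where $|K_1|^{1/2}\le c h^{1/2}$ and the boundary FE error estimate enter — so that the product is of the desired order. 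Getting this splitting right, and ensuring that the application of Lemma~\ref{diff_jh_estimate} on $K_2$ still produces a coercivity constant $\delta'>0$ independent of $h$ (it does, since the lemma is stated on all of $\G$ and restriction only drops a nonnegative quantity), is the part that requires care; everything else is a rerun of the estimates already assembled in Lemmas~\ref{yhu-yhrhu} and~\ref{lem_aux_uh-rhu}.
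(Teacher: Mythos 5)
Your overall architecture matches the paper's: localize the variational inequalities to $K_2$, use the midpoint exactness of $R_h^{\ub}$ on $K_2$ together with Theorem~\ref{regadjoint} to get $\scalar{R_h^{\ub}\pb-\pb}{\uhb-R_h^{\ub}\ub}{K_2}\le ch^2\lnorm{\uhb-R_h^{\ub}\ub}{K_2}$, and split $\pb-p_h(y_h(R_h^{\ub}\ub))$ as in \eqref{pu-phrhu} to get the $ch^2|\ln h|^{3/2}$ contribution. But the heart of this lemma is precisely the point you dismiss in your last paragraph, and your treatment of it contains a genuine gap. The claim that Lemma~\ref{diff_jh_estimate} survives the restriction to $K_2$ because ``restriction only drops a nonnegative quantity'' is false: the reduced Hessian $J_h''$ is nonlocal (the directions enter through the linearized state equation), so the localized pairing
\[
\scalar{p_h(y_h(R_h^{\ub}\ub))-\phb+\nu(R_h^{\ub}\ub-\uhb)}{\uhb-R_h^{\ub}\ub}{K_2}
=J_h''(\hat u)[\uhb-R_h^{\ub}\ub,\chi_{K_2}(\uhb-R_h^{\ub}\ub)]
\]
is a \emph{mixed} term, and the off-diagonal block $J_h''(\hat u)[\chi_{K_1}(\uhb-R_h^{\ub}\ub),\chi_{K_2}(\uhb-R_h^{\ub}\ub)]$ has no sign. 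The paper explicitly notes that Lemma~\ref{diff_jh_estimate} is \emph{not} directly applicable here and isolates exactly this cross term.

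Moreover, your proposed fix for the $K_1$ leftovers does not deliver the needed order by the route you sketch. If you bound the cross term by $L^2$-continuity of $J_h''$, you only get $c\lnorm{\uhb-R_h^{\ub}\ub}{K_1}\lnorm{\uhb-R_h^{\ub}\ub}{K_2}\le ch^{3/2}\lnorm{\uhb-R_h^{\ub}\ub}{K_2}$, i.e.\ no improvement over Lemma~\ref{lem_aux_uh-rhu}; and the boundary FE error estimate you invoke plays no role in this term. The missing ingredient is the $L^1(\G)$-stability of the linearized discrete state, $\lnorm{y_h^{v}}{\O}\le c\|v\|_{L^1(\G)}$ (the paper's Lemma~\ref{l1estimate_linearizedstates}, proved by a dual problem), together with the observation that $\nu\intg{v_1v_2}=0$ because $v_1=\chi_{K_1}(\uhb-R_h^{\ub}\ub)$ and $v_2=\chi_{K_2}(\uhb-R_h^{\ub}\ub)$ have disjoint supports. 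Only then does the cross term obey $|J_h''(\hat u)[v_1,v_2]|\le c\|v_1\|_{L^1(K_1)}\|v_2\|_{L^1(K_2)}$, and the factor $\|v_1\|_{L^1(K_1)}\le|K_1|^{1/2}\lnorm{\uhb-R_h^{\ub}\ub}{K_1}\le ch^{1/2}\cdot ch^{3/2}=ch^2$ (Assumption~\ref{bound_ass} plus Lemma~\ref{lem_aux_uh-rhu}) makes the $K_1$ contribution absorbable; you also need to check that $\chi_{K_2}(\uhb-R_h^{\ub}\ub)$ still lies in the critical cone so that \eqref{SSC} applies to the diagonal block. Without these elements your argument stalls at $ch^{3/2}$.
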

\begin{proof}
We can follow the lines of the proof of Lemma \ref{lem_aux_uh-rhu} up to formula \eqref{uh-rhu0} considering only the set $K_2$, i.e., we find
\begin{equation}\label{uh-rhuk_0}
 \begin{aligned}
  0&\le \scalar{R_h^{\ub}\pb-\bar p_h+\nu (R_h^{\ub}\ub-\bar u_h)}{\uhb-R_h^{\ub}\ub}{K_2}\\
  &=\scalar{R_h^{\ub}\pb-\pb}{\uhb-R_h^{\ub}\ub}{K_2}+\scalar{\pb-p_h(y_h(R_h^{\ub}\ub))}{\uhb-R_h^{\ub}\ub}{K_2}\\
  &+\scalar{p_h(y_h(R_h^{\ub}\ub))-\phb+\nu (R_h^{\ub}\ub-\uhb)}{\uhb-R_h^{\ub}\ub}{K_2}.
  \end{aligned}
\end{equation}
Introducing the characteristic functions $\chi_{K_i}$ w.r.t. to the set $K_i$, $i=1,2$, the last scalar product in the previous formula can be interpreted as
\begin{equation}\label{uh-rhuk_1}
  (J_h'(R_h^{\ub}\ub)-J_h'(\uhb))(\chi_{K_2}(\uhb-R_h^{\ub}\ub)).
\end{equation}
Unfortunately, the result of Lemma \ref{diff_jh_estimate} is not directly applicable. Analogously to the proof of Lemma \ref{diff_jh_estimate}, we obtain for some $0<\theta<1$ and $\hat u=\uhb+\theta(R_h^{\ub}\ub-\uhb)$
\begin{equation}\label{uh-rhuk_2}
   (J_h'(\uhb)-J_h'(R_h\ub))(\chi_{K_2}(\uhb-R_h^{\ub}\ub))=J_h''(\hat u)[\uhb-R_h^{\ub}\ub,\chi_{K_2}(\uhb-R_h^{\ub}\ub)]
\end{equation}
due to the mean value theorem. We continue by inserting appropriate intermediate terms
\[
  \begin{split}
    J_h&''(\hat u)[\uhb-R_h^{\ub}\ub,\chi_{K_2}(\uhb-R_h^{\ub}\ub)]\ge J''(\ub)[\chi_{K_2}(\uhb-R_h^{\ub}\ub),\chi_{K_2}(\uhb-R_h^{\ub}\ub)]\\
    &-|(J_h''(\ub)-J''(\ub))[\chi_{K_2}(\uhb-R_h^{\ub}\ub),\chi_{K_2}(\uhb-R_h^{\ub}\ub)]|\\
    &-|(J_h''(\hat u)-J_h''(\ub))[\chi_{K_2}(\uhb-R_h^{\ub}\ub),\chi_{K_2}(\uhb-R_h^{\ub}\ub)]|\\
    &-|J_h''(\hat u)[\uhb-R_h^{\ub}\ub,\chi_{K_2}(\uhb-R_h^{\ub}\ub)]-J_h''(\hat u)[\chi_{K_2}(\uhb-R_h^{\ub}\ub),\chi_{K_2}(\uhb-R_h^{\ub}\ub)]|.
  \end{split}
\]
The first three addends can be estimated by means of the second order sufficient optimality conditions and further estimates regarding second derivatives of the cost functional as in the proof of Lemma \ref{diff_jh_estimate}. Thus, there is a constant $\delta'>0$ and a mesh size $h_0>0$
such that for all $h<h_0$
\begin{equation}\label{uh-rhuk_3}
  \begin{split}
    J_h''(\hat u)[\uhb-R_h^{\ub}\ub,\chi_{K_2}(\uhb-R_h^{\ub}\ub)]&\ge \delta'\lnorm{\uhb-R_h^{\ub}\ub}{K_2}^2-|J_h''(\hat u)[v_1,v_2]|\\
  \end{split}
\end{equation}
where we introduced the abbreviations $v_1:=\chi_{K_1}(\uhb-R_h^{\ub}\ub)$ and $v_2:=\chi_{K_2}(\uhb-R_h^{\ub}\ub)$. We obtain for the last term in the previous estimate
\begin{equation}\label{uh-rhuk_4}
 \begin{split}
|J_h''(\hat u)[v_1,v_2]|=\left|\into{y_h^{v_1}y_h^{v_2}-p_h(y_h(\hat u))d_{yy}(x,y_h(\hat u))y_h^{v_1}y_h^{v_2}}+\nu\intg{v_1v_2}\right|,
 \end{split}
\end{equation}
where the last term vanishes by construction. We continue by
\begin{equation}\label{uh-rhuk_5}
\begin{split}
\left|\into{y_h^{v_1}y_h^{v_2}-p_h(y_h(\hat u))d_{yy}(x,y_h(\hat u))y_h^{v_1}y_h^{v_2}}\right|&\le c\lnorm{y_h^{v_1}}{\O}\lnorm{y_h^{v_2}}{\O}\\
&\le c\|v_1\|_{L^1(\Gamma)}\|v_2\|_{L^1(\Gamma)}
\end{split}
\end{equation}
due to the Cauchy-Schwarz inequality, the uniform boundedness of the discrete variables $p_h(y_h(\hat u))$ and $y_h(\hat u)$, and Lemma~\ref{l1estimate_linearizedstates}. Combining \eqref{uh-rhuk_0}-\eqref{uh-rhuk_5}, we derive
\begin{equation*}
 \begin{aligned}
  \delta'\lnorm{\uhb-R_h^{\ub}\ub}{K_2}^2&\le\scalar{R_h^{\ub}\pb-\pb}{\uhb-R_h^{\ub}\ub}{K_2} \\
  &+\scalar{\pb-p_h(y_h(R_h^{\ub}\ub))}{\uhb-R_h^{\ub}\ub}{K_2}\\
  &+c\|\uhb-R_h^{\ub}\ub\|_{L^1(K_1)}\|\uhb-R_h^{\ub}\ub\|_{L^1(K_2)}.
  \end{aligned}
\end{equation*}
Now, the first term can be estimated by
\[
  \scalar{R_h^{\ub}\pb-\pb}{\uhb-R_h^{\ub}\ub}{K_2}\le ch^2\lnorm{\uhb-R_h^{\ub}\ub}{K_2},
\]
following the lines of the proof of the previous lemma. Furthermore, the second term was already estimated in a similar form in the proof of Lemma \ref{lem_aux_uh-rhu} such that
\[
\scalar{\pb-p_h(y_h(R_h^{\ub}\ub))}{\uhb-R_h^{\ub}\ub}{K_2} \le ch^2|\ln h|^{3/2}\lnorm{\uhb-R_h^{\ub}\ub}{K_2},
\]
provided that the mesh grading parameters satisfy $\vec1/2<\vec\mu<\vec1/4+\vec\lambda/2$. Applying the H\"older inequality, Assumption \ref{bound_ass} and Lemma \ref{lem_aux_uh-rhu}, we further obtain
\[
  \|\uhb-R_h^{\ub}\ub\|_{L^1(K_1)}\le |K_1|^{1/2}\lnorm{\uhb-R_h^{\ub}\ub}{K_1}\le ch^2.
\]
Summarizing, we can prove the assertion
\[
  \lnorm{\uhb-R_h^{\ub}\ub}{K_2}\le ch^2|\ln h|^{3/2}
\]
if the mesh grading parameters satisfy $\vec1/2<\vec\mu<\vec1/4+\vec\lambda/2$.
\end{proof}
\section{Main result}\label{sec:mainresults}
This section is concerned with the error estimates for the postprocessing approach. As introduced in Section~\ref{sec:discretization} the control is approximated by piecewise constant functions. Afterwards the control $\tilde u_h$ is calculated by a projection of the discrete adjoint state $\bar p_h$ to the admissible set $U_{ad}$:
\begin{equation*}
  \tilde u_h:=\Pi_{[u_a,u_b]}\left(-\frac{1}{\nu}\bar p_h\right).
\end{equation*}
This projection is piecewise linear and continuous, but the constructed control does not belong to the discrete admissible set in general. However, we will prove that $\tilde u_h$ possesses superconvergence properties.
\begin{theorem}\label{mainresult}
Suppose that Assumption \ref{Psetting} and Assumption \ref{bound_ass} are fulfilled. Then there exists a mesh size $h_0>0$ such that for all $h<h_0$ the estimate
\[
\lnorm{\yb-\yhb}{\O}+\lnorm{\pb-\phb}{\G}+\lnorm{\ub-\tilde u_h}{\G}\le ch^2|\ln h|^{3/2}
\]
is valid provided that $\vec1/2<\vec\mu<\vec1/4+\vec\lambda/2$.
\end{theorem}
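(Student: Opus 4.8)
The plan is to bound each of the three error contributions separately and then collect them. The central observation is that all three quantities can be reduced to the supercloseness quantity $\lnorm{\uhb-R_h^{\ub}\ub}{\G}$, the refined supercloseness on $K_2$ from Lemma~\ref{lem_aux_uh-rhu_K2}, finite element error estimates on the boundary from Lemma~\ref{feerrorgenPDE}(ii) together with the regularity of the adjoint state from Theorem~\ref{regadjoint}, and the $L^1$-approximation estimate \eqref{qhu-rhu}. Throughout, one uses the projection formula \eqref{projform} for $\ub$ and its discrete/postprocessed counterpart $\tilde u_h=\Pi_{[u_a,u_b]}(-\phb/\nu)$ to transfer control errors to adjoint state errors, exploiting the Lipschitz continuity of the pointwise projection $\Pi_{[u_a,u_b]}$.

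First I would treat $\lnorm{\ub-\tilde u_h}{\G}$, which is the decisive term. Using the projection formulas and the Lipschitz property of $\Pi_{[u_a,u_b]}$ one gets $\lnorm{\ub-\tilde u_h}{\G}\le\frac1\nu\lnorm{\pb-\phb}{\G}$, so it suffices to bound the adjoint state error on the boundary. Split
\[
  \lnorm{\pb-\phb}{\G}\le\lnorm{\pb-p_h(\yb)}{\G}+\lnorm{p_h(\yb)-p_h(\yhb)}{\G}.
\]
The first term is a genuine finite element boundary error for the linear adjoint equation with right-hand side $\yb-y_d\in C^{0,\sigma}(\bar\O)$ (by Theorem~\ref{firstorder} and Assumption~(A\ref{A2})), so Lemma~\ref{feerrorgenPDE}(ii) gives $ch^2|\ln h|^{3/2}$ for $\vec1/4<\vec\mu<\vec1/4+\vec\lambda/2$. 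For the second term, by Theorem~\ref{errorestimates}(iii) and a trace theorem it is bounded by $c\lnorm{\yb-\yhb}{\O}$, so the whole estimate hinges on the $L^2(\O)$-error of the state. I would decompose
\[
  \lnorm{\yb-\yhb}{\O}\le\lnorm{\yb-y_h(\ub)}{\O}+\lnorm{y_h(\ub)-y_h(R_h^{\ub}\ub)}{\O}+\lnorm{y_h(R_h^{\ub}\ub)-\yhb}{\O};
\]
the first summand is $ch^2$ by Theorem~\ref{errorestimates}(i) (since $\ub\in H^1(\G)$), the second is $ch^2$ by Lemma~\ref{yhu-yhrhu}, and the third, via the stability estimate Theorem~\ref{errorestimates}(iii) for the discrete state map, is controlled by $c\lnorm{\uhb-R_h^{\ub}\ub}{\G}$.

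This last quantity is where the real difficulty lies, because Lemma~\ref{lem_aux_uh-rhu} only gives $ch^{3/2}$ globally, which is not enough. The remedy is the standard postprocessing device: split $\Gamma$ into $K_1$ and $K_2$. On $K_1$ one uses Assumption~\ref{bound_ass} ($\operatorname{meas}(K_1)\le ch$) together with the crude $L^\infty$-type bound on $\uhb-R_h^{\ub}\ub$ to get $\lnorm{\uhb-R_h^{\ub}\ub}{K_1}\le|K_1|^{1/2}\|\uhb-R_h^{\ub}\ub\|_{L^\infty(K_1)}\le ch^{1/2}\cdot c h^{3/2}\cdot h^{-1/2}$-type estimates, more precisely combining Assumption~\ref{bound_ass} with Lemma~\ref{lem_aux_uh-rhu} yields $\lnorm{\uhb-R_h^{\ub}\ub}{K_1}\le ch^2$; on $K_2$ one invokes Lemma~\ref{lem_aux_uh-rhu_K2} directly to get $ch^2|\ln h|^{3/2}$. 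Adding the two parts gives $\lnorm{\uhb-R_h^{\ub}\ub}{\G}\le ch^2|\ln h|^{3/2}$. Feeding this back, all three terms in the theorem are bounded by $ch^2|\ln h|^{3/2}$; the condition $\vec1/2<\vec\mu<\vec1/4+\vec\lambda/2$ is exactly what is needed for Lemmas~\ref{yhu-yhrhu}, \ref{lem_aux_uh-rhu}, and \ref{lem_aux_uh-rhu_K2} to hold simultaneously and for Lemma~\ref{feerrorgenPDE}(ii) applied to the adjoint equation. The main obstacle is thus not any single estimate but the bookkeeping: one must be careful that the splitting $K_1\cup K_2$ is used consistently for $\yhb$, $\phb$ and $\tilde u_h$, that the $K_1$-contribution genuinely gains the extra half-power from Assumption~\ref{bound_ass} rather than just reproducing the $h^{3/2}$ rate, and that the intermediate functions $p_h(\yb)$, $p_h(\yhb)$, $y_h(\ub)$, $y_h(R_h^{\ub}\ub)$ are inserted in the right order so that every difference matches a hypothesis of an already-proven lemma.
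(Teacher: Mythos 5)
Your treatment of the adjoint state and of the postprocessed control coincides with the paper's: the split $\lnorm{\pb-\phb}{\G}\le\lnorm{\pb-p_h(\yb)}{\G}+\lnorm{p_h(\yb)-\phb}{\G}$ handled by Lemma~\ref{feerrorgenPDE}(ii) and Theorem~\ref{errorestimates}(iii), and the Lipschitz continuity of $\Pi_{[u_a,u_b]}$, are exactly the arguments used there. Likewise the three-term decomposition of $\lnorm{\yb-\yhb}{\O}$ and the bounds for its first two summands are correct. The problem is the third summand. You bound $\lnorm{y_h(R_h^{\ub}\ub)-\yhb}{\O}$ by $c\lnorm{\uhb-R_h^{\ub}\ub}{\G}$ via the $L^2(\G)$-stability of the discrete solution map and then claim $\lnorm{\uhb-R_h^{\ub}\ub}{K_1}\le ch^2$ ``by combining Assumption~\ref{bound_ass} with Lemma~\ref{lem_aux_uh-rhu}.'' This step does not hold: Lemma~\ref{lem_aux_uh-rhu} gives only $\lnorm{\uhb-R_h^{\ub}\ub}{\G}\le ch^{3/2}$, hence the same on $K_1$, and the smallness of $\operatorname{meas}(K_1)$ from Assumption~\ref{bound_ass} cannot upgrade an $L^2(K_1)$ bound of a quantity you do not control pointwise; your sketched $|K_1|^{1/2}\|\cdot\|_{L^\infty(K_1)}$ manipulation produces at best $ch^{3/2}$ again (an inverse estimate eats exactly the factor you hope to gain). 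Indeed, the paper explicitly points out (before Lemma~\ref{lem_aux_uh-rhu_K2}) that the global $L^2$ supercloseness cannot be pushed beyond $h^{3/2}$, because on $K_1$ the modified interpolant is exact only for constants; so a bound $\lnorm{\uhb-R_h^{\ub}\ub}{\G}\le ch^2|\ln h|^{3/2}$ is not available, and your chain of estimates stalls at $h^{3/2}$ for the state, and therefore also for the adjoint and the control.

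The missing ingredient is the $L^1(\G)$-to-$L^2(\O)$ stability estimate for the discrete semilinear state equation, Lemma~\ref{l1estimate_states} in the Appendix, which is proved by a discrete duality argument with an $L^\infty$ bound on the dual variable. With it the paper writes
\begin{equation*}
  \lnorm{y_h(R_h^{\ub}\ub)-\yhb}{\O}\le c\|\uhb-R_h^{\ub}\ub\|_{L^1(\G)}
  \le c\left(|K_1|^{1/2}\lnorm{\uhb-R_h^{\ub}\ub}{K_1}+\lnorm{\uhb-R_h^{\ub}\ub}{K_2}\right),
\end{equation*}
and only now does Assumption~\ref{bound_ass} pay off: the $K_1$ part is $ch^{1/2}\cdot ch^{3/2}=ch^2$ because the Hölder step to $L^1$ contributes the factor $|K_1|^{1/2}\le ch^{1/2}$, while the $K_2$ part is $ch^2|\ln h|^{3/2}$ by Lemma~\ref{lem_aux_uh-rhu_K2}. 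In short, the extra half power must be gained by weakening the norm on the control difference to $L^1(K_1)$ before invoking the supercloseness, not by strengthening the supercloseness on $K_1$; without Lemma~\ref{l1estimate_states} (or an equivalent substitute) your argument does not reach the rate $h^2|\ln h|^{3/2}$.
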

\begin{proof}
We introduce intermediate functions and apply the triangle inequality such that
\[
  \begin{aligned}
   \lnorm{\yb-\yhb}{\O}&\le \lnorm{\yb-y_h(\ub)}{\O}+\lnorm{y_h(\ub)-y_h(R_h^{\ub}\ub)}{\O}+\lnorm{y_h(R_h^{\ub}\ub)-\yhb}{\O}.
  \end{aligned}
\]
The first error term is a usual finite element error for semilinear elliptic PDEs and we rely on results given in Theorem \ref{errorestimates}(i) and Theorem~\ref{firstorder}. The second term was estimated separately in Lemma \ref{yhu-yhrhu} for all $h<h_0$. By means of Lemma \ref{l1estimate_states} the third term is estimated as follows
\[
  \begin{aligned}
  \lnorm{y_h(R_h^{\ub}\ub)-\yhb}{\O}&\le c(\|R_h^{\ub}\ub-\uhb\|_{L^1(K_1)}+\|R_h^{\ub}\ub-\uhb\|_{L^1(K_2)})\\
  &\le c(|K_1|^{1/2}\lnorm{R_h^{\ub}\ub-\uhb}{K_1}+\lnorm{R_h^{\ub}\ub-\uhb}{K_2})\\
  &\le c(h^2+h^2|\ln h|^{3/2}),
  \end{aligned}
\]
using Assumption \ref{bound_ass} and the results derived in the Lemmata \ref{lem_aux_uh-rhu} and \ref{lem_aux_uh-rhu_K2}, respectively. Thus we can conclude
\[
  \lnorm{\yb-\yhb}{\O}\le ch^2|\ln h|^{3/2}
\]
provided that the mesh grading parameters satisfy $\vec1/2<\vec\mu<\vec1/4+\vec\lambda/2$. For the error in the adjoint states we obtain
\[
  \lnorm{\pb-\phb}{\G}\le  \lnorm{\pb-p_h(\yb)}{\G}+\lnorm{p_h(\yb)-\phb}{\G}
\]
introducing the intermediate adjoint state $p_h(\yb)$ as the solution of the discrete adjoint state equation \eqref{disadjoint} w.r.t. the state $\yb$. Hence, the first error is a finite element error on the boundary for the adjoint states and  we apply Lemma \ref{feerrorgenPDE}(ii) and Theorem~\ref{firstorder} such that
\[
  \lnorm{\pb-p_h(\yb)}{\G}\le ch^2|\ln h|^{3/2}
\]
for $\vec1/2<\vec\mu<\vec1/4+\vec\lambda/2$. Theorem~\ref{errorestimates}(iii) yields the following estimate for the second term
\[
  \lnorm{p_h(\yb)-\phb}{\G}\le c\lnorm{\yb-\yhb}{\O}.
\]
Thus, the proven error estimate for the state gives the overall error estimate
\[
    \lnorm{\bar p-\phb}{\G}\le ch^2|\ln h|^{3/2}
\]
for $\vec1/2<\vec\mu<\vec1/4+\vec\lambda/2$. Since the projection operator is Lipschitz continuous we obtain
\[
 \begin{aligned}
    \lnorm{\ub-\tilde u_h}{\G}&=\left\|\Pi_{[u_a,u_b]}\left(-\frac{1}{\nu}\pb\right)-\Pi_{[u_a,u_b]}\left(-\frac{1}{\nu}\phb\right)\right\|_{L^2(\G)}\\
    &\le c \lnorm{\pb-\phb}{\G}\le ch^2|\ln h|^{3/2},
 \end{aligned}
\]
where we used the error estimate for the adjoint states proven in the step before for $\vec1/2<\vec\mu<\vec1/4+\vec\lambda/2$.
\end{proof}
\begin{remark}
  The same convergence rates can also be proven for the concept of variational discretizations if one takes into account the improved finite element error estimates on the boundary. This concept was first introduced in~\cite{Hinze:2005} for linear elliptic control problems with distributed control and in~\cite{CasasMateos:2008} for semilinear Neumann boundary control problems.
\end{remark}
\begin{remark}
  A convergence order of two can analogously be proven for distributed control problems. In that case the condition $\vec \mu<\vec\lambda$ is sufficient since one only needs error estimates in the domain.
\end{remark}
\section{Numerical example}
In this section we present a numerical example that illustrates the proven error estimates of the previous section. The example is a slightly modified version of the one presented in \cite{MateosRoesch:2008}. Let $r,\,\varphi$ be the polar coordinates located at the origin. For $\omega\in(0,2\pi)$ we define the circular sector $S_\omega:=\{x\in\mathbb{R}^2:\,(r(x),\varphi(x))\in(0,\sqrt{2}]\times [0,\omega]\}$. Moreover, let $\Omega_\omega=(-1,1)^2\cap S_\omega$ with the boundary $\G_\omega$. We are interested in the optimal control problems (QP)
\begin{gather*}
\text{min}\quad F_\omega(y,u):=\frac{1}{2}\int\limits_{\O_\omega}(y-y_d)^2\,dx+\frac{1}{2}\int\limits_{\G_\omega}u^2\ds+\int\limits_{\G_\omega}g_2y\ds\\
\begin{aligned}
-\Delta y+y+y^3&=f&&\quad\text{in }\Omega_\omega\\
\partial_n y&=u+g_1&&\quad\text{on }\Gamma_\omega
\end{aligned}\\
u_a\leq u\leq u_b\quad \text{a.e. on }\G_\omega.
\end{gather*}
Again we set $\lambda=\pi/\omega$. Let us define the functions
\begin{align*}
f(x)&=r^\lambda(x)\cos(\lambda\varphi(x))+r^{3\lambda}(x)\cos^3(\lambda\varphi(x))&&\quad\text{in }\O_\omega,\\
y_d(x)&=2r^\lambda(x)\cos(\lambda\varphi(x)+3r^{3\lambda}(x)\cos^3(\lambda\varphi(x))&&\quad\text{in }\O_\omega
\end{align*}
and
\begin{align*}
 g_1(x)&=\partial_n(r^\lambda(x)\cos(\lambda\varphi(x)))-\Pi_{[u_a,u_b]}(r^\lambda(x)\cos(\lambda\varphi(x)))&&\quad\text{on }\G_\omega,\\
 g_2(x)&=-\partial_n(r^\lambda(x)\cos(\lambda\varphi(x)))&&\quad\text{on }\G_\omega.
\end{align*}
Furthermore, let us set $u_a=-0.8$ and $u_b=0.8$. One can easily check, that
\begin{align*}
\yb(x)&=r^\lambda(x)\cos(\lambda\varphi(x)),\\
\pb(x)&=-r^\lambda(x)\cos(\lambda\varphi(x)),\\
\ub(x)&=\Pi_{[u_a,u_b]}\left(r^\lambda(x)\cos(\lambda\varphi(x))\right)
\end{align*}
satisfy the respective first order optimality conditions. Moreover, the second order sufficient optimality condition \eqref{SSC} is fulfilled by construction. The functions $\yb$, $\pb$ and $\ub$ have exactly the singular behavior discussed in Theorem~\ref{firstorder}. For the solution of the optimal control problems (QP), a standard SQP method was implemented, see e.g. Heinkenschloss and Tr\"oltzsch \cite{HeinkenschlossTroeltzsch:1999}, Kelley and Sachs \cite{KelleySachs:1990} or Kunisch and Sachs \cite{KunischSachs:1992}. The resulting quadratic subproblems were solved by applying a primal dual active set strategy according to Bergonioux, Ito and Kunisch \cite{BergouniouxItoKunisch:1999}. We also refer to Kunisch and R\"osch \cite{KunischRoesch:2002}. The discrete solutions of the PDEs have been computed using a finite element method on graded meshes as introduced in the beginning of Section 3, see Figure~\ref{gradedmesh}.
\begin{figure}
\begin{center}
\includegraphics[scale=0.35]{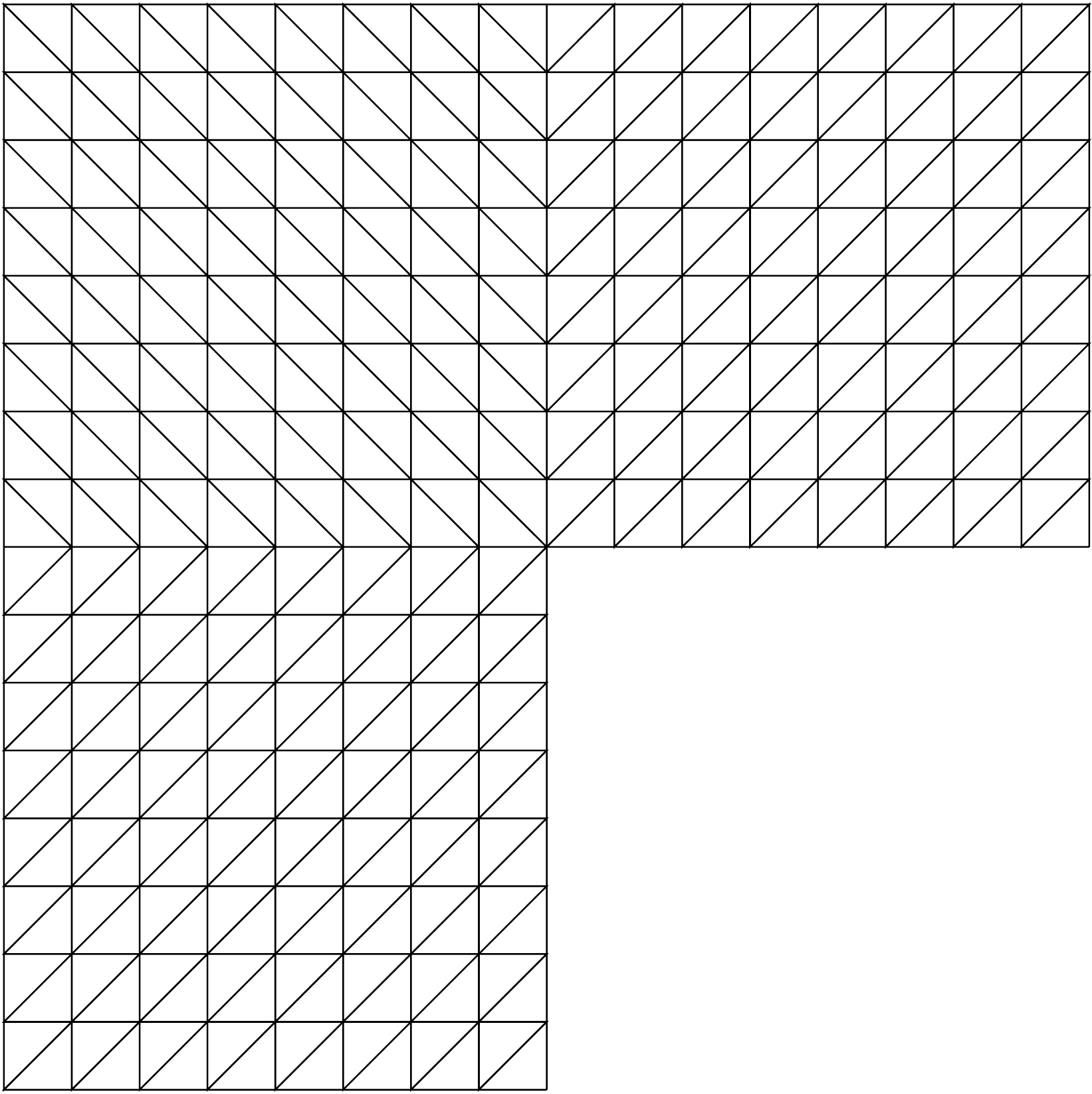}
\hspace{1cm}
\includegraphics[scale=0.35]{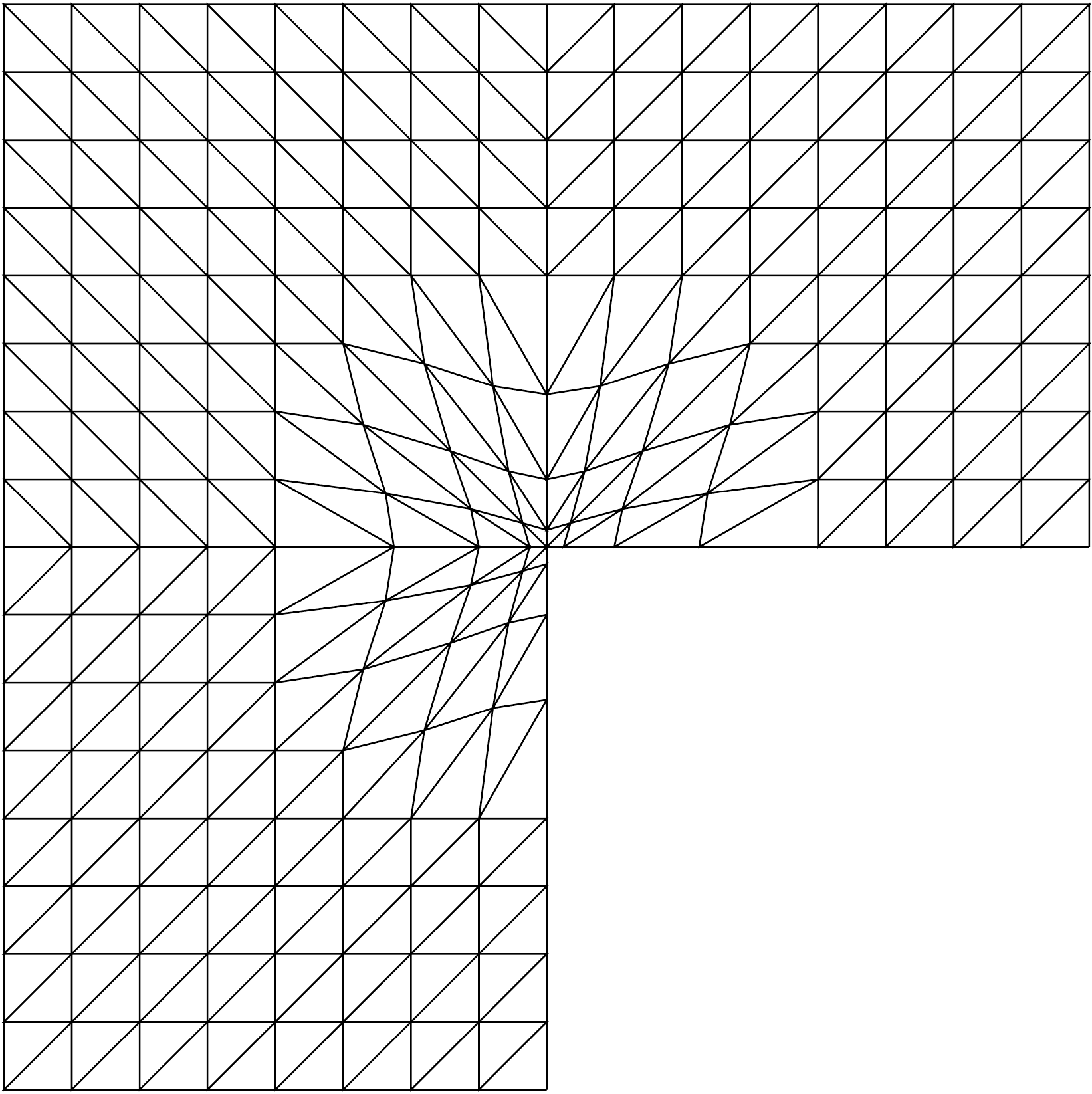}
\caption{$\O_{3\pi/2}$ with ungraded and graded mesh ($\mu=0.5,\,R=0.5$)}\label{gradedmesh}
\end{center}
\end{figure}
Figure \ref{solutions} shows the postprocessed control $\tilde u_h$ and the state $\yhb$ as the solution of the fully discretized optimal control problem (QP$_h$).\\
\begin{figure}
\begin{center}
\includegraphics[scale=0.35]{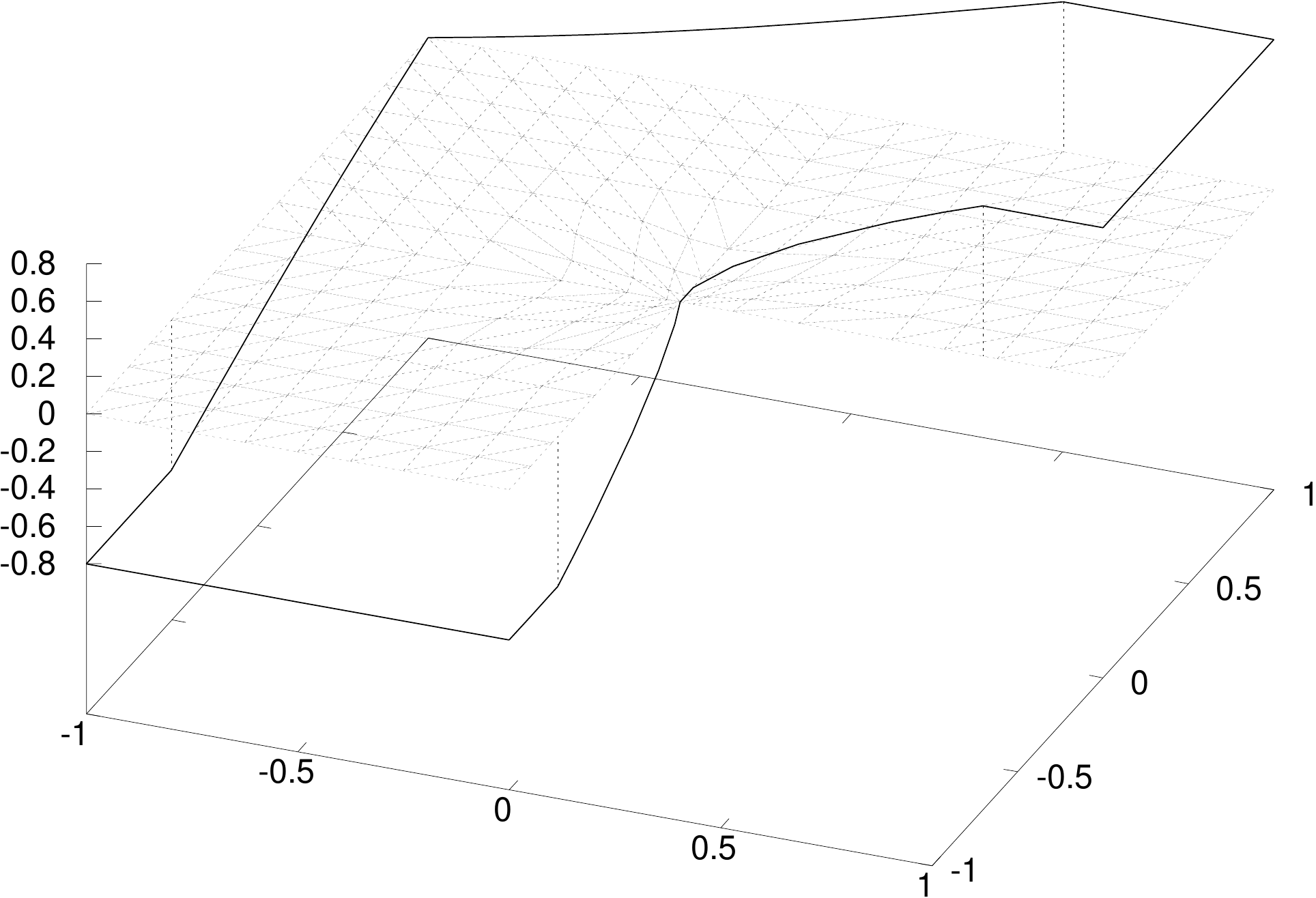}
\hspace{0.5cm}
\includegraphics[scale=0.35]{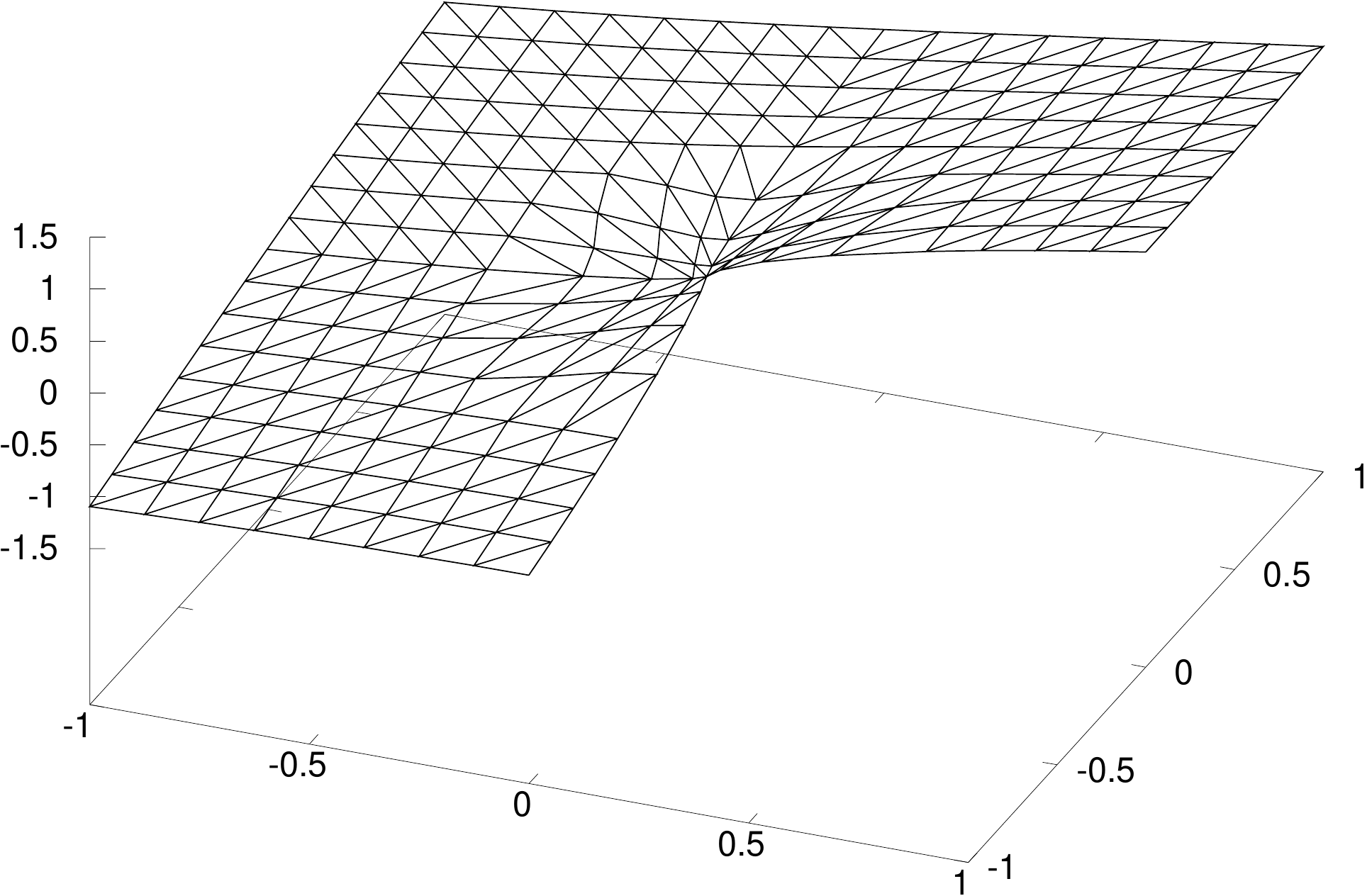}
\caption{$\tilde u_h$ and $\yhb$ on a graded mesh ($\mu=0.5,\,R=0.5$)}\label{solutions}
\end{center}
\end{figure}
In Table \ref{errorresults} one can find the computed errors $\lnorm{\ub-\tilde u_h}{\G_{3\pi/2}}$ and the experimental orders of convergence (EOC) once for uniform meshes ($\mu=1$) and for graded meshes with $\mu=0.5$.
According to Theorem 6.2. in \cite{MateosRoesch:2008}, we expect on uniform meshes a convergence rate $1/2+\lambda=1.16$ which is illustrated by our numerical results. Moreover, we can see that the numerical example confirms for a grading parameter $\mu=0.5<1/4+\lambda/2\approx 0.58$ the results proven in Theorem \ref{mainresult}.
\begin{table}
\begin{center}
\begin{tabular}{rrcccc}
\toprule
\mc{2}{c}{degrees of freedom}&\mc{2}{c}{$\mu=0.5$}&\mc{2}{c}{$\mu=1$}\\
$\O$&$\G$&$\|\ub-\tilde u_h\|$&EOC&$\|\ub-\tilde u_h\|$&EOC\\
\midrule
$133$&$48$&$1.23e-2$&$1.90$ &$2.75e-2$ &$1.10$\\
$481$&$92$&$3.64e-3$ &$1.92$&$1.37e-2$ &$1.12$\\
$1825$&$192$&$1.02e-3$ &$1.94$ &$6.58e-3$ &$1.13$\\
$7105$&$384$ &$2.76e-4$&$1.95$ &$3.07e-3$ &$1.13$\\
$28033$&$768$ &$7.27e-5$ &$1.96$ &$1.42e-3$ &$1.14$\\
$111361$&$1536$&$1.88e-5$ &$1.97$ &$6.46e-4$ &$1.14$\\
$443905$&$3072$&$4.83e-6$ &$1.97$ &$2.93e-4$ &$1.15$\\
$1772545$&$6144$&$1.23e-6$ &-&$1.32e-4$ &-\\
\bottomrule
\end{tabular}
\caption{$L^2(\G_{3\pi/2})$-error of the postprocessed control $\tilde u_h$}\label{errorresults}
\end{center}
\end{table}

\begin{appendix}
\section{Appendix}
\begin{lemma}\label{l1estimate_states}
Let Assumption~\ref{Psetting} be satisfied. Furthermore, let $\yhb=y_h(\bar u_h)$ and $y_h(R_h^{\ub}\ub)$ be the solutions of~\eqref{vardisPDE} w.r.t $\bar u_h$ and $R_h^{\ub}\ub$, respectively. Then there exists a mesh size $h_0>0$ such that for all $h<h_0$ the estimate
\begin{equation*}
\lnorm{\yhb-y_h(R_h^{\ub}\ub)}{\O}\le c\|\uhb-R_h^{\ub}\ub\|_{L^1(\Gamma)}
\end{equation*}
is valid.
\end{lemma}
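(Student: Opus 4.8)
The plan is to run a standard $L^2(\O)$ duality argument which reduces the claim to an $h$-uniform $L^\infty(\G)$-bound for the discrete dual solution, and to obtain that bound from the weighted-Sobolev regularity of the continuous dual solution rather than from a (logarithmically suboptimal) discrete maximum-norm stability estimate. First I would set $w_h:=\yhb-y_h(R_h^{\ub}\ub)\in V_h$ and subtract the two instances of the discrete state equation~\eqref{vardisPDE}. Writing $d(x,\yhb)-d(x,y_h(R_h^{\ub}\ub))=\alpha w_h$ by the mean value theorem, with $\alpha(x)$ the corresponding difference quotient where $w_h(x)\neq0$ and, say, $\alpha(x)=d_y(x,\yhb(x))$ otherwise, one obtains
\[
  a(w_h,v_h)+\into{\alpha w_h v_h}=\intg{(\uhb-R_h^{\ub}\ub)v_h}\qquad\forall v_h\in V_h .
\]
By Assumption~(A\ref{A4}) the coefficient $\alpha$ is nonnegative and, together with the uniform boundedness of $\yhb$ and $y_h(R_h^{\ub}\ub)$ for small $h$ (Theorem~\ref{errorestimates}(iv)), uniformly bounded in $L^\infty(\O)$; by Assumption~(A\ref{A6}) it is bounded below by $c_\O$ on $E_\O$. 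Hence Lemma~\ref{pro:reglinearPDE} applies to the associated linear equation with constants independent of $\alpha$.

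Second, I would introduce the dual solution $\phi\in H^1(\O)$ of $a(v,\phi)+\into{\alpha v\phi}=\into{v\,w_h}$ for all $v\in H^1(\O)$ and its Galerkin approximation $\phi_h\in V_h$. Testing the $\phi$-equation with $v=w_h\in V_h$, exploiting the Galerkin orthogonality of $\phi-\phi_h$ with respect to the symmetric bilinear form $a(\cdot,\cdot)+\into{\alpha\,\cdot\,\cdot}$, and then testing the $w_h$-equation with $v_h=\phi_h$, one arrives at the identity
\[
  \lnorm{w_h}{\O}^2=\intg{(\uhb-R_h^{\ub}\ub)\phi_h}\le\|\uhb-R_h^{\ub}\ub\|_{L^1(\G)}\,\|\phi_h\|_{L^\infty(\G)} .
\]
It therefore remains to establish $\|\phi_h\|_{L^\infty(\G)}\le c\,\lnorm{w_h}{\O}$ with $c$ independent of $h$ (the case $w_h\equiv 0$ being trivial).

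Third, since $\phi_h\in C^0(\bar\O)$ I would split $\|\phi_h\|_{L^\infty(\G)}\le\|\phi_h\|_{L^\infty(\O)}\le\|\phi\|_{L^\infty(\O)}+\|\phi-\phi_h\|_{L^\infty(\O)}$. Because $w_h\in L^2(\O)\hookrightarrow W^{0,2}_{\vec\beta}(\O)$ and the Neumann datum vanishes, Lemma~\ref{pro:reglinearPDE}(\ref{linReg2}) gives $\phi\in\WbO{2}$ with $\|\phi\|_{\WbO{2}}\le c\lnorm{w_h}{\O}$ for $\vec\beta$ satisfying~\eqref{condbeta} with $\beta_j<1/2$; the embedding $\WbO{2}\hookrightarrow C^{0,\epsilon}(\bar\O)$ then bounds $\|\phi\|_{L^\infty(\O)}$ by $c\lnorm{w_h}{\O}$, and~\eqref{eq:W2q} with $q=4/3$ yields $|\phi|_{H^{3/2}(\O)}\le c\lnorm{w_h}{\O}$. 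Repeating the interpolation and inverse estimate argument from the proof of Theorem~\ref{errorestimates}(ii) for the present dual problem — which has exactly the same structure, with $\alpha$ playing the role of $d_y(\cdot,\yb)$ and relying only on $H^{3/2}$-regularity of the exact solution — yields $\|\phi-\phi_h\|_{L^\infty(\O)}\le c h^{1/2-\varepsilon}|\phi|_{H^{3/2}(\O)}\le c\lnorm{w_h}{\O}$ since $h<1$. Inserting the two bounds above and dividing by $\lnorm{w_h}{\O}$ completes the argument.

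The step requiring the most care is the $L^\infty$-bound for $\phi_h$: a direct discrete maximum-norm stability estimate would introduce an unwanted $|\ln h|$ factor, so it is essential to route the estimate through the continuous dual solution and to convert every norm of $\phi$ into $\lnorm{w_h}{\O}$ via the $\alpha$-independent a priori estimate of Lemma~\ref{pro:reglinearPDE}(\ref{linReg2}). The only remaining technicality is to check that the redefined coefficient $\alpha$ meets the hypotheses of that lemma, which is immediate from Assumptions~(A\ref{A4}) and~(A\ref{A6}) together with the uniform $L^\infty$-boundedness of the discrete states.
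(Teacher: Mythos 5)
Your proposal is correct and follows essentially the same route as the paper's own proof: the same linearized dual problem with coefficient $\alpha$ and its discrete counterpart, the identity $\lnorm{\yhb-y_h(R_h^{\ub}\ub)}{\O}^2=\intg{(\uhb-R_h^{\ub}\ub)\phi_h}$, the $L^1(\G)$--$L^\infty(\G)$ H\"older step, and the bound $\|\phi_h\|_{L^\infty}\le\|\phi\|_{L^\infty}+\|\phi-\phi_h\|_{L^\infty}\le c(1+h^{1/2-\varepsilon})\lnorm{\yhb-y_h(R_h^{\ub}\ub)}{\O}$. Your small variations (redefining $\alpha$ via $d_y(x,\yhb)$ on the zero set and routing the regularity through Lemma~\ref{pro:reglinearPDE}(\ref{linReg2}) and the weighted embeddings) merely make explicit the $\alpha$-independence and well-posedness that the paper obtains more briefly from Lemma~\ref{pro:reglinearPDE} and the embedding $H^{3/2}(\O)\hookrightarrow L^\infty(\O)$.
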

\begin{proof}
Analogously to the beginning of the proof of Lemma \ref{yhu-yhrhu}, we introduce a dual auxiliary problem and its discrete counterpart by: let $\phi\in H^1(\O)$ be the unique solution of
\[
  a(\phi,v)+\into{\alpha\phi v}=\into{(\yhb-y_h(R_h^{\ub}\ub))v}\quad\quad\forall v\in H^1(\O),
\]
with
\[
  \alpha(x)=\left\{
  \begin{aligned}
  \frac{d(x,\yhb(x))-d(x,y_h(R_h^{\ub}\ub)(x))}{\yhb(x)-y_h(R_h^{\ub}\ub)(x)},&\quad\text{if }\yhb(x)-y_h(R_h^{\ub}\ub)(x)\not=0\\
  0,&\quad\text{otherwise}
  \end{aligned}
  \right..
\]
Due to the approximation properties of $R_h^{\ub}$, the uniform convergence of $\bar u_h$ to $\bar u$ and the monotonicity of $d$ according to Assumption~\ref{Psetting} one can easily check that there exists a $h_0>0$ such that the problem is well-posed for all $h<h_0$. The corresponding discrete counterpart $\phi_h\in V_h$ is the unique solution of the problem
\[
  a(\phi_h,v_h)+\into{\alpha\phi_h v_h}=\into{(\yhb-y_h(R_h^{\ub}\ub))v_h}\quad\forall v_h\in V_h.
\]
By means of $\yhb(\ub), y_h(R_h^{\ub}\ub)\in V_h$ being solutions of \eqref{vardisPDE} and the definition of $\alpha$, we derive
\[
  \begin{split}
  \lnorm{\yhb(\ub)-y_h&(R_h^{\ub}\ub)}{\O}^2=a(\phi_h, \yhb(\ub)-y_h(R_h^{\ub}\ub))+\into{\alpha\phi_h(\yhb(\ub)- y_h(R_h^{\ub}\ub))}\\
  &=a(\yhb(\ub)-y_h(R_h^{\ub}\ub),\phi_h)+\into{(d(x,\yhb(\ub))-d(x,y_h(R_h^{\ub}\ub))\phi_h}\\
  &=\intg{(\uhb-R_h^{\ub}\ub)\phi_h}.
  \end{split}
\]
We continue by the estimates
\[
    \begin{aligned}
      \intg{(\uhb-R_h^{\ub}\ub)\phi_h}&\le \|\uhb-R_h^{\ub}\ub\|_{L^1(\G)}\|\phi_h\|_{L^\infty(\G)}\\
      &\le \|\uhb-R_h^{\ub}\ub\|_{L^1(\G)}(\|\phi_h-\phi\|_{L^\infty(\O)}+\|\phi\|_{L^\infty(\O)})\\
      &\le c\|\uhb-R_h^{\ub}\ub\|_{L^1(\G)}\left(h^{1/2-\varepsilon}+1\right)\lnorm{\yhb-y_h(R_h^{\ub}\ub)}{\O},
    \end{aligned}
\]
where a standard $L^\infty(\O)$-error estimate (see e.g.~\eqref{fe-error-inf}) and Lemma~\ref{pro:reglinearPDE} together with the embedding $H^{3/2}(\O)\hookrightarrow L^\infty(\O)$ were used. Thus, the assertion is proven.
\end{proof}
\begin{lemma}\label{l1estimate_linearizedstates}
Suppose that the assumptions (A\ref{A4}) and (A\ref{A6}) are fulfilled. Moreover, let $y_h^{v}\in V_h$ be the unique solution of \eqref{Gh'} for a given discrete state $y_h$ w.r.t. the right hand side $v$. Then the estimate
\begin{equation*}
  \lnorm{y_h^{v}}{\O}\le c\|v\|_{L^1(\Gamma)}
\end{equation*}
holds true.
\end{lemma}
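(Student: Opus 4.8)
The plan is to run a duality (Aubin--Nitsche) argument, mirroring the proof of Lemma~\ref{l1estimate_states}, which trades the $L^2(\O)$-bound on $y_h^{v}$ for a \emph{uniform} $L^\infty(\G)$-bound on a dual discrete solution. Write $\alpha:=d_y(\cdot,y_h)$; by Assumptions (A\ref{A4}) and (A\ref{A6}) this is a nonnegative function in $L^\infty(\O)$ with $\alpha\ge c_\O$ on $E_\O$, so that Lemma~\ref{pro:reglinearPDE} and the finite element estimates behind Theorem~\ref{errorestimates} are available for the operator $-\Delta+\alpha$. By duality it suffices to estimate $\into{w\,y_h^{v}}$ for an arbitrary $w\in L^2(\O)$ with $\lnorm{w}{\O}\le1$. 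To this end I would introduce the auxiliary problem: let $\phi\in H^1(\O)$ be the unique (Lax--Milgram, using (A\ref{A6})) solution of $a(\phi,z)+\into{\alpha\,\phi\,z}=\into{w\,z}$ for all $z\in H^1(\O)$, i.e.\ the weak solution of $-\Delta\phi+\alpha\phi=w$ in $\O$, $\partial_n\phi=0$ on $\G$, and let $\phi_h\in V_h$ be its Galerkin approximation in $V_h$.

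Testing the discrete equation for $\phi_h$ with the test function $y_h^{v}\in V_h$, using the symmetry of $a(\cdot,\cdot)$ and of the zeroth order term, and inserting the defining relation~\eqref{Gh'} for $y_h^{v}$ with the test function $\phi_h\in V_h$ (recall $\alpha=d_y(\cdot,y_h)$), I obtain the key identity $\into{w\,y_h^{v}}=\intg{v\,\phi_h}$. Consequently $\into{w\,y_h^{v}}\le\|v\|_{L^1(\G)}\,\|\phi_h\|_{L^\infty(\G)}$, and passing to the supremum over $w$ reduces everything to the uniform bound $\|\phi_h\|_{L^\infty(\G)}\le c$.

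To establish that bound I would split $\|\phi_h\|_{L^\infty(\G)}\le\|\phi_h\|_{L^\infty(\O)}\le\|\phi-\phi_h\|_{L^\infty(\O)}+\|\phi\|_{L^\infty(\O)}$. Lemma~\ref{pro:reglinearPDE}~(\ref{linReg1}) gives $\phi\in H^{3/2}(\O)$ with $\|\phi\|_{H^{3/2}(\O)}\le c\,\lnorm{w}{\O}\le c$, so the embedding $H^{3/2}(\O)\hookrightarrow C^0(\bar\O)$ controls $\|\phi\|_{L^\infty(\O)}$. The term $\|\phi-\phi_h\|_{L^\infty(\O)}$ is a pointwise finite element error for an only $H^{3/2}$-regular solution and is treated exactly as in the proof of Theorem~\ref{errorestimates}(ii): an $L^\infty$ interpolation error estimate for $\phi-I_h\phi$, together with an inverse inequality, the embedding $H^1(\O)\hookrightarrow L^p(\O)$ and C\'ea's lemma to absorb $\|I_h\phi-\phi_h\|_{L^\infty(\O)}$, giving $\|\phi-\phi_h\|_{L^\infty(\O)}\le c\,h^{1/2-\varepsilon}|\phi|_{H^{3/2}(\O)}\le c\,h^{1/2-\varepsilon}$ for some arbitrarily small $\varepsilon>0$ (with $p$ chosen large, depending on $\vec\mu$). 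Since $h<1$ this is bounded, hence $\|\phi_h\|_{L^\infty(\G)}\le c$ and the lemma follows.

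The routine parts are the duality identity and the interpolation/inverse estimates; the only point requiring genuine care is the \emph{uniformity} of the constant. The constants in C\'ea's lemma and in the pointwise estimate depend on the coercivity and continuity constants of $a(\cdot,\cdot)+\into{\alpha\cdot\,\cdot}$ on $H^1(\O)$. Coercivity is uniform thanks to (A\ref{A6}): the bound $\alpha\ge c_\O$ on the set $E_\O$ of positive measure yields a Poincar\'e-type inequality with a constant depending only on $c_\O$, $|E_\O|$ and $\O$. Continuity involves $\|\alpha\|_{L^\infty(\O)}=\|d_y(\cdot,y_h)\|_{L^\infty(\O)}$, which in every instance where this lemma is invoked is bounded independently of $h$ because the underlying discrete states are uniformly bounded in $L^\infty(\O)$; this dependence should be made explicit so that $c$ is truly $h$-independent. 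In contrast to Lemma~\ref{l1estimate_states}, no smallness restriction on $h$ is needed here, since both the continuous and the discrete dual problems are well posed for every $h$ by (A\ref{A6}).
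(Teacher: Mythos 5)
Your proposal is correct and follows essentially the same route as the paper, which proves this lemma exactly by the dual-problem argument of Lemma~\ref{l1estimate_states} with $\alpha=d_y(\cdot,y_h)$: the only cosmetic difference is that you dualize against an arbitrary $w\in L^2(\O)$ instead of taking $w=y_h^{v}$ itself as the right-hand side of the dual problem, which is equivalent. Your remarks on the uniformity of the constant (via the $\alpha$-independence in Lemma~\ref{pro:reglinearPDE} and a uniform $L^\infty$ bound on the discrete states entering $d_y$) correctly make explicit what the paper leaves implicit.
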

\begin{proof}
The proof can be done analogously to the proof of Lemma \ref{l1estimate_states} introducing an appropriate dual problem.
\end{proof}
\end{appendix}

\bibliographystyle{plain}
\bibliography{literatur_neumann.bib}

\end{document}